
\documentclass[11pt]{amsart}
\usepackage{eurosym}
\usepackage{amsfonts}
\usepackage{amsmath}
\usepackage{amssymb}
\usepackage{tikz-cd}
\usepackage{bbm}
\usepackage{amscd}
\usepackage{etoolbox}
\usepackage{filecontents}

\setcounter{MaxMatrixCols}{10}

\makeatletter
\patchcmd{\BR@backref}{\newblock}{\newblock(page~}{}{}
\patchcmd{\BR@backref}{\par}{)\par}{}{}
\makeatother
\newtheorem{theorem}{Theorem}[section]
\newtheorem*{theorem*}{Theorem}

\newtheorem{corollary}[theorem]{Corollary}

\newtheorem{lemma}[theorem]{Lemma}

\newtheorem{proposition}[theorem]{Proposition}

\theoremstyle{definition}
\newtheorem{example}[theorem]{Example}
\newtheorem{definition}[theorem]{Definition}

\AtBeginDocument{   \def\MR#1{}
}

\newcommand\sxa{\mathop{\cl S_{X,A}}}
\newcommand\sxamin{\mathop{\cl S^{\min}_{X,A}}}
\newcommand\syb{\mathop{\cl S_{Y,B}}}
\newcommand\sybmin{\mathop{\cl S^{\min}_{Y,B}}}

\newcommand\jlambda{\mathop{J(\lambda)}}

\newcommand\Ref{\mathop{\rm Ref}}

\newcommand\deter{\mathop{\rm det}}
\newcommand\loc{\mathop{\rm loc}}
\newcommand\qq{\mathop{\rm q}}
\newcommand\qa{\mathop{\rm qa}}
\newcommand\qs{\mathop{\rm qs}}
\newcommand\qc{\mathop{\rm qc}}
\newcommand\ns{\mathop{\rm ns}}
\newcommand\xx{\mathop{\rm x}}

\newcommand\omin{\mathop{\rm omin}}
\newcommand\cc{\mathop{\rm c}}
\newcommand\dd{\mathop{d}}

\newcommand{\cl}[1]{\mathcal{#1}}
\newcommand{\bb}[1]{\mathbb{#1}}

\begin{document}
\title[Perfect strategies for non-signalling games]
{Perfect strategies for non-signalling games} 


\author[M. Lupini et al]{M. Lupini}
\address{Mathematics Department,
California Institute of Technology,
1200 East California Boulevard,
Mail Code 253-37,
Pasadena, CA 91125}
\email{lupini@caltech.edu}

\author[]{L. Man\v{c}inska}
\address{Department of Mathematical Sciences,
Universitetsparken 5, 2100 Copenhagen, Denmark}
\email{mancinska@math.ku.dk}

\author[]{V. I. Paulsen}
\address{Institute for Quantum Computing and Department of Pure Mathematics, University of Waterloo, Waterloo, Canada}
\email{vpaulsen@uwaterloo.ca}

\author[]{D. E. Roberson}
\address{Department of Physics,
Technical University of Denmark,
                    Richard Petersens Plads,
                    Copenhagen, Denmark}
\email{davideroberson@gmail.com}

\author[]{G. Scarpa}
\address{Faculty of Mathematical Sciences,
Universidad Complutense de Madrid,
Plaza de las Ciencias, 3, Dpcho. 489,
28040 Madrid, Spain}
\email{giannicolascarpa@gmail.com}

\author[]{S. Severini}
\address{Department of Computer Science, University College London,
Gower Street, London WC1E 6BT, United Kingdom, 
and 
Institute of Natural Sciences, Shanghai Jiao Tong University, 200240 Shanghai, China}
\email{s.severini@ucl.ac.uk}

\author[]{I. G. Todorov}
\address{Mathematical Sciences Research Centre,
Queen's University Belfast, Belfast BT7 1NN, United Kingdom,
and
School of Mathematical Sciences, Nankai University, 300071 Tianjin, China}
\email{i.todorov@qub.ac.uk}

\author[]{A. Winter}
\address{ICREA and F\'{\i}sica Te\`{o}rica: Informaci\'{o} i Fenomens Qu\`{a}ntics, Universitat Aut\`{o}noma de Barcelona, ES-08193 Bellaterra, Barcelona, Spain}
\email{andreas.winter@uab.cat}

\date{9 April 2018}

\begin{abstract} 
We unify and consolidate various results about 
non-signall-ing games, a subclass of non-local two-player one-round games, 
by introducing and studying several new families of games
and establishing general theorems about them, which extend a number of known facts in a variety of special cases. 
Among these families are {\it reflexive games,} 
which are characterised as the hardest non-signalling games that can be won using a given set of strategies. 
We introduce {\it imitation games,} in which the players 
display linked behaviour, and which contains as subclasses the classes of variable assignment games, 
binary constraint system games, synchronous games, many games based on graphs, and {\it unique} games. 
We associate a C*-algebra $C^*(\mathcal{G})$ to any imitation game $\mathcal{G}$, and show that 
the existence of perfect quantum commuting (resp.\ quantum, local) 
strategies of $\mathcal{G}$ can be characterised in terms of properties of this C*-algebra, extending known results about synchronous games. 
We single out a subclass of imitation games, which we call {\it mirror games,} 
and provide a characterisation of their quantum commuting strategies 
that has an algebraic flavour, showing in addition that their approximately quantum perfect strategies 
arise from amenable traces on the encoding C*-algebra. 
We describe the main classes of non-signalling correlations in terms of states on operator system tensor products.
\end{abstract}

\maketitle




\section{Introduction}

The study of correlations between spatially separated and non-signalling parties has been 
central for Quantum Mechanics since the initiation of the subject. The celebrated Bell Theorem
demonstrates that the set $\cl C_{\rm q}$ of quantum correlations, arising from shared finite dimensional entanglement,
is strictly larger than the set of classical (or local) correlations, arising from shared randomness. 
A third natural class of correlations, $\cl C_{\rm qc}$, arising from Algebraic Quantum Field Theory, 
corresponds to the \emph{commuting model} of Quantum Mechanics.
According to it, the measurements of the two parties act on a single (infinite dimensional) Hilbert space;  
this setting was studied in \cite{jnppsw} and subsequently in \cite{oz}, where the author showed that every 
correlation from $\cl C_{\rm qc}$
can be approximated by ones from $\cl C_{\rm q}$ if and only if the Connes Embedding Problem in operator algebra theory 
\cite{c} has an affirmative answer.
Deep results about the inequality between those classes of correlations,
answering questions left open by Tsirelson (see \cite{tsirelson1980} and \cite{tsirelson1993}) 
were recently obtained by Slofstra in \cite{s_2016} and  \cite{s_2017} when the number of inputs is large and in \cite{dpp} similar results are shown for a small number of inputs.
The relevance of operator algebraic techniques in the study of correlation sets became 
also apparent through \cite{fkpt} and, subsequently, \cite{paulsen_quantum_2015}, 
where operator systems and their tensor products were used to describe some correlation classes.

Non-signalling games form a subclass of the class of non-local, or two-player one-round games, 
and have attracted substantial attention in theoretical physics, mathematics and computer science
(see e.g.\ \cite{mv}, \cite{rao}, \cite{pv} and \cite{sv}).
Non-local correlations have been successfully used to obtain strategies for such games that outperform the classical ones. 
A prominent such example is the graph colouring game defined in \cite{cameron_quantum_2007}, where 
it was demonstrated that the quantum chromatic number of a graph, arising from the set $\cl C_{\rm q}$, 
can be strictly smaller than its classical counterpart. 
In \cite{paulsen_quantum_2015}, the authors defined a corresponding commuting version of the chromatic number, 
using the set $\cl C_{\rm qc}$ of quantum commuting correlations,
in addition to other related analogues. 
The graph homomorphism game was introduced in \cite{qhoms, Rthesis} and subsequently studied \cite{ortiz_quantum_2016} as 
a generalisation of the graph colouring game. An even more general class -- that of synchronous games -- 
was considered in \cite{paulsen_estimating_2016}, where perfect strategies from the classes $\cl C_{\rm q}$ and 
$\cl C_{\rm qc}$ were described in terms of traces on a canonical C*-algebra, associated with the input-output sets of the game.

The present paper is a contribution to this area and aims at utilising an operator algebraic approach 
in order to describe various correlation classes and to 
formulate, in several distinct settings, necessary and sufficient conditions for the existence of a non-signalling correlation from a given correlation class,  that is 
a perfect strategy of a given non-signalling game. 
After collecting some necessary preliminary material in Section \ref{s_prel}, we describe, in Section \ref{Section:correlations},
the perfect strategies of a general non-signalling game that belong to a given class, in terms of states of 
operator system tensor products. 
These descriptions have two advantages as compared to the existing use of C*-algebras, encountered more 
commonly in the literature: first, the operator systems involved are finite dimensional and hence 
easier to handle than their infinite dimensional C*-algebraic counterparts
and, second, they open a way for the study of generalised probabilistic theories, 
not allowed by the C*-algebraic formalism \cite{b}. 

We further define the new class of \emph{reflexive games}; a reflexive game 
can be thought of as the hardest game that can be won 
using a family of strategies from a given class. We show that the perfect strategies of reflexive games 
are in a one-to-one correspondence to states on certain operator system quotients. 
In fact, we introduce, more generally, \emph{reflexive covers} of games, and exhibit several examples where the reflexive 
cover of a game can be strictly harder than the original game. 

In Sections \ref{Section:games} and \ref{s_wsig}, 
we introduce and study the class of \emph{imitation games}. It includes  
a number of classes of games that have been extensively studied previously, 
such as the class of variable assignment games and its 
subclass of binary constraint system (BCS) games \cite{cleve_characterization_2014}, unique games \cite{rao}
and synchronous games \cite{paulsen_estimating_2016}. 
With every imitation game $\cl G$, we associate a 
canonical C*-algebra $C^*(\cl G)$, and provide an explicit description of $C^*(\cl G)$ in the case 
$\cl G$ is a variable assignment game. If $\cl G$ is a linear BCS game, 
we relate $C^*(\cl G)$ to the group C*-algebra of the group of $\cl G$ introduced in \cite{cleve_perfect_2016}. 
We show that the perfect quantum commuting strategies of an imitation game $\cl G$
correspond to traces on $C^*(\cl G)$, while the perfect quantum strategies of $\cl G$ correspond to finite dimensional 
representations of $C^*(\cl G)$. 
We prove the equality of several classes of correlations, namely the quantum ones, the 
quantum spatial ones and the maximally entangled quantum ones.

In Section \ref{s_mg}, we consider a subclass of the class of imitation games, which we call \emph{mirror games},
and provide an algebraic, Hilbert-space free, approach, to their perfect strategies. 
As a result, we give a different representation of their perfect quantum commuting strategies,
using traces on canonical input-output C*-algebras. We show that the
\emph{quantum approximate} perfect strategies of these games correspond precisely to amenable traces 
on these C*-algebras, extending a recent result from \cite{kps}.


\section{Preliminaries}\label{s_prel}

We fix  finite sets $X$, $Y$, $A$ and $B$.
A collection of scalars 
$$p = \{p(a,b|x,y) : (x,y)\in X\times Y, (a,b)\in A\times B\}$$
is called \emph{non-signalling} if 
\begin{equation}\label{eq_yy'}
\sum_{b\in B} p(a,b|x,y) = \sum_{b\in B} p(a,b|x,y'), \ \ x\in X, y,y'\in Y, a\in A
\end{equation}
and 
\begin{equation}\label{eq_xx'}
\sum_{a\in A} p(a,b|x,y) = \sum_{a\in A} p(a,b|x',y), \ \ x,x'\in X,  y\in Y, b\in B.
\end{equation}
If, in addition, $p(\cdot,\cdot | x,y)$ is a probability distribution for every $(x,y)\in X\times Y$,
then $p$ is called a \emph{non-signalling correlation} on $(X,Y,A,B)$.
The set $\cl C$ (also denoted $\cl C_{\rm ns}$) of all non-signalling correlations is canonically endowed 
with a compact metrisable topology and a convex structure by regarding it as a subset of $\bb R^m$ where $m$ is the cardinality of $X \times Y \times A \times B$.
If $p\in \cl C$, we set $p(a|x)$ (resp.\ $p(b|y)$) to be equal to the sum in (\ref{eq_yy'}) 
(resp.\ (\ref{eq_xx'})), for any choice of $y\in Y$ (resp.\ $x\in X$).

We recall the definition of several sets of non-signalling correlations.
The set $\cl C_{\deter}$ of \emph{deterministic} correlations consists of all correlations 
$p$ for which there exist functions $f : X\to A$ and $g : Y\to B$ such that 
$p(a,b|x,y) = 1$ if and only if $a = f(x)$ and $b = g(y)$. 

The set $\cl C_{\mathrm{loc}}$ of \emph{local correlations }is the convex hull
of correlations of the form $p\left( a,b|x,y\right) =p^{1}\left(
a|x\right) p^{2}\left( b|y\right) $ where $p^{1}:A\times X\rightarrow 
\left[ 0,1\right] $ and $p^{2}:B\times Y\rightarrow \left[ 0,1\right] $
satisfy $\sum_{a}p^{1}\left( a|x\right) =\sum_{b}p^{2}\left( b|y\right)
=1$ for every $x\in X$ and every $y\in Y$. 

In order to define the rest of the classes, known as non-classical, 
we recall that a \emph{positive operator-valued
measure (POVM)} on a Hilbert space $\mathcal{H}$ is a tuple $\left(
A_{1},\ldots ,A_{n}\right) $ of positive operators on $\mathcal{H}$ summing
up to the identity operator. A \emph{projection-valued measure (PVM)}  is a
POVM consisting of projection operators. 

In each of the definitions given below, we use PVM's, but results of \cite{fritz_operator_2014} (see also \cite{paulsen_quantum_2015}) show that the sets of correlations that we obtain are the same if we replace PVM's with POVM's in each definition.

The set $\cl C_{\mathrm{q}}$ of 
\emph{quantum correlations} consists of the correlations of the form 
\begin{equation*}
p\left(a,b|x,y\right) =\left\langle \xi | P_{x,a}\otimes Q_{b,y} | \xi
\right\rangle
\end{equation*}
where $\mathcal{H}$ is a \emph{finite-dimensional }Hilbert space, 
$\left\vert \xi \right\rangle \in \mathcal{H}\otimes \mathcal{H}$ is a unit
vector, and for every $x\in X$ and $y\in Y$, $\left( P_{x,a}\right) _{a\in A}$
and $\left(Q_{y,b}\right) _{b\in B}$ are PVM's on $\mathcal{H}$. 


Suppose that $\mathcal{H}$ is
a finite-dimensional Hilbert space with fixed basis $\left\vert
i\right\rangle$, $i=1,2,\ldots ,d$. 
The\emph{\ maximally entangled vector} in 
$\mathcal{H}\otimes \mathcal{H}$ is the unit vector 
$\left\vert \eta \right\rangle = d^{-1/2}\sum_{i=1}^{d}\left\vert i\right\rangle \otimes \left\vert i\right\rangle$. 
The set $\cl C_{\mathrm{qm}}$ is the\emph{\ convex hull} of
the quantum correlations of the form 
\begin{equation*}
p\left( a,b|x,y\right) =\left\langle \eta |P_{x,a}\otimes Q_{b,y}|\eta\right\rangle,
\end{equation*}
where $\left(P_{x,a}\right)_{a\in A}$ (resp.\ $\left(Q_{y,b}\right) _{b\in B}$) 
is a PVM on $\mathcal{H}$ for every $x\in X$ (resp.\ $y\in Y$). 

The set $\cl C_{\mathrm{qs}}$ of \emph{quantum spatial} correlations is defined similarly to the set of 
quantum correlations, but the restriction that $\mathcal{H}$ be finite dimensional is dropped. The 
set $\cl C_{\mathrm{qa}}$ of \emph{quantum approximate}
correlations is defined to be the closure of $\cl C_{\mathrm{q}}$. Finally, the set $\cl C_{\mathrm{qc}}$ of 
\emph{quantum commuting} correlations consists of the correlations of the form
\begin{equation*}
p\left( a,b|x,y\right) =\left\langle \xi |P_{x,a} Q_{y,b}|\xi \right\rangle,
\end{equation*}
where $\mathcal{H}$ is a Hilbert space, 
$\left\vert \xi \right\rangle \in \mathcal{H}$ is a unit vector and 
$\left(P_{x,a}\right) _{a\in A}$ and $\left(Q_{y,b}\right) _{b\in B}$, $x\in X$, $y\in Y$ are
PVMs on $\mathcal{H}$ such that $P_{x,a} Q_{y,b} = Q_{y,b} P_{x,a}$ for all $x\in X$, $y\in Y$, $a\in A$ and $b\in B$.
A separability argument shows
that considering \emph{separable }Hilbert spaces in the definition of
the sets $\cl C_{\mathrm{qs}}$ and $\cl C_{\mathrm{qc}}$ yields equivalent
definitions. It is clear from the definition (after observing that 
$\cl C_{\mathrm{qc}}$ is closed) that we have the following inclusions between
these sets of correlations
\begin{equation*}
\cl C_{\deter} \subseteq \cl C_{\mathrm{loc}}\subseteq \cl C_{\mathrm{qm}}\subseteq \cl C_{\mathrm{q}}\subseteq 
\cl C_{\mathrm{qs}}\subseteq \cl C_{\mathrm{qa}}\subseteq \cl C_{\mathrm{qc}} \subseteq  \cl C_{\rm ns}\text{.}
\end{equation*}

We now recall the connection between non-signalling correlations and perfect strategies for 
non-signalling games. 
A \emph{non-signalling game} is a tuple $\cl G = (X,Y,A,B,\lambda_{\cl G})$, where 
$X,Y,A$ and $B$ are finite sets and $\lambda_{\cl G} : X\times Y \times A\times B \to \{0,1\}$ is a function.
We think of $X$ and $Y$ as sets of possible \emph{inputs} or \emph{questions} for two players
(Alice and Bob) of a two-party single-round game, and of $A$ and $B$ as 
sets of possible \emph{outputs }or \emph{answers }for Alice and Bob,
respectively. The function $\lambda_{\cl G}$ is called the 
\emph{payoff}, or \emph{rule}, \emph{function} of $\cl G$,
assigning value $1$ to $\left(x,y,a,b\right) $ if $a,b$ are
\emph{acceptable answers} to the pair $(x,y)$ of questions, and $0$ otherwise. 
When there is no risk of confusion, 
we write $\lambda = \lambda_{\mathcal{G}}$. 
Notice that in this subclass of non-local games, 
we do not consider probability distributions on the input sets and we restrict our attention to two players. 
Correspondingly, we are interested in the perfect strategies for these games, which are automatically winning 
strategies for any given probability distribution on the direct product of the input sets of the game.
More precisely, we call a non-signalling correlation $p$ on $(X,Y,A,B)$ 
a \emph{perfect} \emph{strategy} for 
$\mathcal{G}$ if 
$$\lambda \left(x,y,a,b\right) = 0 \ \Longrightarrow \ p\left(a,b|x,y\right) =0.$$
We let
$\cl C\left( \mathcal{G}\right)$ (or $\cl C(\lambda)$, $\cl C_{\rm ns}(\lambda)$) be the set of all such correlations.

More specifically, if $\lambda : X\times Y\times A\times B\rightarrow \left\{ 0,1\right\}$ 
and $p\in \cl C_{\ns}$, write
$$N(\lambda) = \{(x,y,a,b)\in X\times Y \times A\times B : \lambda(x,y,a,b) = 0\}$$
and
$$N(p) = \{(x,y,a,b)\in X\times Y \times A\times B : p(a,b | x,y) = 0\}.$$
Setting 
$$\cl C_{\xx}(\lambda) := \{p\in \cl C_{\xx} : N(\lambda) \subseteq N(p)\}= \cl C_{\xx} \cap \cl C(\cl G),$$
we obtain a corresponding chain 
$$\cl C_{\deter}(\lambda)\subseteq \cl C_{\loc}(\lambda) \subseteq \cl C_{\rm qm}(\lambda) \subseteq 
\cl C_{\qq}(\lambda) \subseteq \cl C_{\qs}(\lambda) 
\subseteq \cl C_{\qa}(\lambda) \subseteq \cl C_{\qc}(\lambda)\subseteq \cl C_{\rm ns}(\lambda).$$
A \emph{perfect $\xx$-strategy} for the game $\cl G$ is an element of $\cl C_{\xx}(\lambda)$.

In this paper, we will arrive at characterisations of such sets of
correlations in terms of states on operator systems. 
We refer the reader to 
\cite{paulsen_completely_2002} for an introduction to the basic notions of
non-commutative functional analysis (see also \cite{effros_operator_2000,pisier_introduction_2003}). 
Let $\cl S$ be an operator system, that is, a subspace of a unital C*-algebra 
$\cl A$ such that $1\in \cl S$ and $x\in \cl S \Rightarrow x^*\in \cl S$. 
Then $M_n(\cl S)\subseteq M_n(\cl A)$, and we let 
$M_n(\cl S)^+$ be the cone of all elements of $M_n(\cl S)$ that are positive in the C*-algebra 
$M_n(\cl A)$. 

Let $\cl S$ and $\cl T$ be operator systems. Given a linear map $\phi : \cl S\to\cl T$, let 
$\phi^{(n)} : M_n(\cl S)\to M_n(\cl T)$ be the map given by $\phi^{(n)}((x_{i,j})_{i,j}) = (\phi(x_{i,j})_{i,j})$. 
The map $\phi$ is called \emph{positive} if $\phi(\cl S^+)\subseteq \cl T^+$, and
\emph{completely positive} if $\phi^{(n)}\left(M_n(\cl S)^+\right)\subseteq M_n(\cl T)^+$
for every $n\in \bb{N}$. 
A state on $\cl S$ is a positive linear map $s : \cl S\to \bb{C}$ with $s(1) = 1$. 

We write $\cl S\subseteq_{\rm c.o.i.} \cl T$ when $\cl S\subseteq \cl T$ 
and $M_n(\cl S)^+ = M_n(\cl T)^+\cap M_n(\cl S)$ for each $n\in \bb{N}$.
We denote by $\cl S\oplus^{1} \cl T$ the coproduct of $\cl S$ and $\cl T$ in the
category of operator systems and unital completely positive maps;
it is characterised by the following universal property: $\cl S\oplus^{1} \cl T$ is generated as a linear space 
by $\cl S$ and $\cl T$, its unit is also the unit of $\cl S$ and of $\cl T$, and 
whenever $\cl R$ is an operator system and $\phi : \cl S\to \cl R$ and $\psi : \cl T\to \cl R$ 
are unital completely positive maps then there exists a unique (unital) completely positive map 
$\theta : \cl S\oplus^{1} \cl T \to \cl R$ extending $\phi$ and $\psi$.
We refer the reader to \cite[Section 3]{fritz_operator_2014} and \cite[Section 8]{kavruk} for further properties of 
the operator system coproduct. 
We further let $\cl S\oplus ^{\infty } \cl T$ be the product of $\cl S$ and $\cl T$, and 
define $\ell ^{\infty}(A)$ to be the product of $\left\vert A\right\vert $ copies of $\mathbb{C}$ indexed by $A$.

In the sequel, we will make use of the tensor theory of operator systems developed in 
\cite{kptt_tensor}. If $\cl S$ and $\cl T$ are operator systems, we denote by 
$\cl S\otimes_{\min}\cl T$ (resp.\ $\cl S\otimes_{\rm c}\cl T$, $\cl S\otimes_{\max}\cl T$)
the minimal (resp.\ commuting, maximal) tensor product of $\cl S$ and $\cl T$
introduced therein. 
We note that, if $\cl S\subseteq_{\rm c.o.i.} \cl A$ and $\cl T\subseteq_{\rm c.o.i.} \cl B$
for C*-algebras $\cl A$ and $\cl B$, then 
$\cl S\otimes_{\min}\cl T\subseteq_{\rm c.o.i.} \cl A\otimes_{\min}\cl B$, where 
$\cl A\otimes_{\min} \cl B$ is the spatial tensor product of $\cl A$ and $\cl B$. 
By their definition, the tensor product $\cl S\otimes_{\rm c} \cl T$ 
linearises pairs of unital completely positive maps $\phi : \cl S\to \cl A$ 
and $\psi : \cl T\to \cl A$ with commuting ranges, 
while the maximal tensor product $\cl S\otimes_{\max} \cl T$ linearises 
jointly completely positive maps $\theta : \cl S\times \cl T\to \cl A$
(here $\cl A$ is an arbitrary C*-algebra).


\section{Correlations as perfect strategies\label{Section:correlations}}

Let $X$, $Y$, $A$ and $B$ be finite sets. 
Following \cite{paulsen_quantum_2015}, we let $\mathcal{S}_{X,A}$ be the
coproduct of $\left\vert X\right\vert $ copies of $\ell^{\infty}(A)$, indexed by $X$. 
Let 
$\cl A(X,A) = \ell^{\infty}(A) \ast_1 \cdots \ast_1 \ell^{\infty}(A)$ be the C*-algebra free product, 
amalgamated over the unit, of $|X|$ copies of $\ell ^{\infty}(A)$; 
note that, via Fourier transform,
$\cl A(X,A) \cong C^*(\bb{F}(A,X))$, where $\bb{F}(X,A) = \bb{Z}_{|A|} \ast \cdots \ast \bb{Z}_{|A|}$ is the free product of 
$|X|$ copies of the cyclic group with $|A|$ elements. 
Letting $(e_{x,a})_{a=1}^{|A|}$ be the canonical basis of the $x$-th copy of $\ell^{\infty}(A)$,
we have that
$$\sxa = {\rm span}\{e_{x,a} : x\in X, a\in A\}$$
within $\cl A(X,A)$. 
Note the relations 
$$\sum_{a\in A} e_{x,a} = 1, \ \ \ x\in X.$$

Set 
$\cl A_{\min}(X,A) = \ell^{\infty}(A)\otimes\cdots \otimes \ell^{\infty}(A)$ ($|X|$ copes)
and note that the C*-algebra $\cl A_{\min}(X,A)$ is *-isomorphic to $\ell^{\infty}(\Delta_{X,A})$,
where $\Delta_{X,A} = A^X$. 
Let 
$$\sxamin = {\rm span}\{e'_{x,a} : x\in X, a\in A\}\subseteq \cl A_{\min}(X,A),$$
where $e'_{x,a}(x',a') = 0$ if $x = x'$ and $a\neq a'$, and  $e'_{x,a}(x',a') = 1$ otherwise.

For conceptual convenience, we will denote the canonical generators of $\cl S_{Y,B}$
(resp.\ $\sybmin$) by $f_{y,b}$ (resp.\ $f_{y,b}'$).
For an element $s$ of the dual vector space 
$(\sxa\otimes \syb)^{\dd}$ of $\sxa\otimes \syb$, write 
$$p_s(a,b|x,y) = s(e_{x,a}\otimes f_{y,b}), \ \ \ (x,y)\in X\times Y, (a,b)\in A\times B,$$
and 
$$p_s = \{p_s(a,b|x,y) : (x,y)\in X\times Y, (a,b)\in A\times B\}.$$ 
Clearly, the collection $p_s$ is non-signalling. Conversely, given a non-signalling collection of scalars
$p$, let $s_p \in (\sxa\otimes \syb)^{\dd}$ be the (well-defined and unique linear) functional satisfying 
$$s_p(e_{x,a}\otimes f_{y,b}) = p(a,b|x,y), \ \ \ (x,y)\in X\times Y, (a,b)\in A\times B.$$
It is clear that $p\to s_p$ is a 
bijective correspondence between 
$(\sxa\otimes \syb)^{\dd}$ and the set of all non-signalling collections on $(X,Y,A,B)$.

Let $\lambda : X\times Y \times A\times B\to \{0,1\}$. 
Set 
$$\jlambda = {\rm span}\{e_{x,a}\otimes f_{y,b} : \lambda(x,y,a,b) = 0\};$$
thus, $\jlambda$ is a linear subspace of $\sxa \otimes \syb$. 

If $\tau$ is any of the tensor products $\max, \cc$ or $\min$ and $J\subseteq \sxa\otimes\syb$, let 
$$\cl P_{\tau}(J) = \{s\in (\sxa\otimes_{\tau}\syb)^{\dd} : s \mbox{ is a state with } J\subseteq \ker(s)\}.$$
Let also 
$$\cl P_{\rm omin}(J) = \{s\in (\sxamin\otimes_{\min}\sybmin)^{\dd} : s \mbox{ is a state with } J\subseteq \ker(s)\}.$$
We write $\cl P_{\tau} = \cl P_{\tau}(\{0\})$.

\begin{theorem}\label{th_cp}
Let $\cl G = (X,Y,A,B,\lambda)$ be a non-signalling game and 
$$p = \{(p(a,b|x,y)) : (x,y)\in X\times Y, (a,b)\in A\times B\}$$
be a non-signalling collection of scalars. 
The map $p\to s_p$ is a continuous affine isomorphism between 
\begin{itemize}
\item[(i)] $\cl C_{\rm ns}(\lambda)$ and $\cl P_{\max}(\jlambda)$;

\item[(ii)] $\cl C_{\qc}(\lambda)$ and $\cl P_{\cc}(\jlambda)$;

\item[(iii)] $\cl C_{\qa}(\lambda)$ and $\cl P_{\min}(\jlambda)$;

\item[(iv)] $\cl C_{\loc}(\lambda)$ and $\cl P_{\rm omin}(\jlambda)$,
\end{itemize}
\noindent and a bijection between
\begin{itemize}
\item[(v)] $\cl C_{\deter}(\lambda)$ and the extreme points of the set $\cl P_{\rm omin}(\jlambda)$. 
\end{itemize}
\end{theorem}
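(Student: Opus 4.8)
The plan is to decouple the combinatorial constraint carried by $\lambda$ from the analytic identification of the correlation classes. Everything rests on the identity $s_p(e_{x,a}\otimes f_{y,b})=p(a,b|x,y)$. Because the generators satisfy $e_{x,a}\ge 0$, $f_{y,b}\ge 0$ and $\sum_a e_{x,a}=\sum_b f_{y,b}=1$, for \emph{any} state $s$ on \emph{any} of the three tensor products the elementary tensor $e_{x,a}\otimes f_{y,b}$ is positive, so $p_s(a,b|x,y)=s(e_{x,a}\otimes f_{y,b})\ge 0$ and $\sum_{a,b}p_s(a,b|x,y)=s(1\otimes1)=1$; thus $p_s$ is automatically a non-signalling correlation, and in particular every state on $\sxa\otimes_{\max}\syb$ already gives an element of $\cl C_{\rm ns}$. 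Moreover, since $\ker(s)$ is a linear subspace, the perfect-strategy condition ``$\lambda(x,y,a,b)=0\Rightarrow p(a,b|x,y)=0$'' holds exactly when $s_p$ kills each spanning vector of $\jlambda$, i.e.\ when $\jlambda\subseteq\ker(s_p)$. Hence it suffices to prove the \emph{unconstrained} correspondences $\cl C_{\rm ns}\leftrightarrow\cl P_{\max}$, $\cl C_{\qc}\leftrightarrow\cl P_{\cc}$, $\cl C_{\qa}\leftrightarrow\cl P_{\min}$ and $\cl C_{\loc}\leftrightarrow\cl P_{\rm omin}$, and then intersect both sides with the linear condition $\jlambda\subseteq\ker(\cdot)$ to obtain (i)--(iv). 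Affineness is immediate, as $p\mapsto s_p$ is the restriction of a fixed linear isomorphism of finite-dimensional spaces; and since all the sets in play are compact and convex, each continuous affine bijection is automatically a homeomorphism.

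For the forward implications I would argue from universal properties. A quantum commuting correlation gives commuting PVMs, hence a pair of unital completely positive maps out of $\sxa$ and $\syb$ with commuting ranges (universal property of the coproduct, \cite{fritz_operator_2014,kavruk}), which the universal property of $\otimes_{\cc}$ converts into a state on $\sxa\otimes_{\cc}\syb$; a correlation in $\cl C_{\qq}$ produces a vector state on $B(\cl H)\otimes_{\min}B(\cl H)$, hence on $\sxa\otimes_{\min}\syb$, and passing to the closure covers $\cl C_{\qa}$; a local correlation is a mixture of deterministic ones, giving a probability measure on $A^X\times B^Y$ and thus a state on $\sxamin\otimes_{\min}\sybmin$. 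The two routine reverse implications are (ii) and (iv). For (ii), the defining property of the commuting tensor product \cite{kptt_tensor} presents a state on $\sxa\otimes_{\cc}\syb$ as $s(u\otimes v)=\langle\xi|\alpha(u)\beta(v)|\xi\rangle$ for commuting unital completely positive maps $\alpha,\beta$ into some $B(\cl H)$ and a unit vector $\xi$; then $\alpha(e_{x,a})$ and $\beta(f_{y,b})$ are commuting POVMs exhibiting $p_s\in\cl C_{\qc}$ (POVMs and PVMs yielding the same class, as recalled in Section \ref{s_prel}). For (iv), the inclusion $\sxamin\otimes_{\min}\sybmin\subseteq_{\rm c.o.i.}\ell^\infty(A^X\times B^Y)$ lets me extend the state to a probability measure $\mu$, whereupon $p_s(a,b|x,y)=\mu(\{(f,g):f(x)=a,\ g(y)=b\})$ is manifestly a mixture of deterministic correlations.

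The analytically substantive steps are the reverse implication of (iii) and the forward implication of (i). For (iii), injectivity of $\otimes_{\min}$ lets me extend the given state to a state $\omega$ on $\cl A(X,A)\otimes_{\min}\cl A(Y,B)\cong C^*(\bb F(X,A))\otimes_{\min}C^*(\bb F(Y,B))$. Free groups are residually finite, so these group C*-algebras are residually finite dimensional; choosing separating families $\{\pi_i\}$, $\{\rho_j\}$ of finite-dimensional representations, the representation $\bigoplus_{i,j}\pi_i\otimes\rho_j$ of the minimal tensor product is faithful, and the vector states of any faithful representation have weak$^*$-closed convex hull equal to the whole state space. Being block-diagonal, each such vector state is a convex combination of vector states of the product representations $\pi_i\otimes\rho_j$, each of which yields an element of $\cl C_{\qq}$ (after padding both factors into a common $\bb C^N$ and completing the images of the generators to PVMs); as $\cl C_{\qq}$ is convex via direct sums and $\cl C_{\qa}=\overline{\cl C_{\qq}}$, we conclude $p_\omega\in\cl C_{\qa}$. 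The forward implication of (i) is the main obstacle: one must show that \emph{every} non-signalling $p$ gives a state on the \emph{operator system} maximal tensor product. Unlike the commuting case, $\otimes_{\max}$ does not embed completely order isomorphically into a maximal C*-tensor product, so there is no representation to fall back on; instead one verifies directly that $s_p$ is positive on the maximal cone, equivalently that $(u,v)\mapsto s_p(u\otimes v)$ is jointly completely positive. I would carry this out through the finite-dimensional duality interchanging $\otimes_{\min}$ and $\otimes_{\max}$ \cite{kptt_tensor}, together with the description of the matrix cones of $\sxa$ and $\syb$ coming from the coproduct structure, reducing the required matrix positivity to the non-signalling identities (\ref{eq_yy'})--(\ref{eq_xx'}); making this reduction precise is where the real work lies.

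Finally, (v) follows from (iv) by a polytope argument. The set $\cl C_{\loc}$ is the convex hull of the finite set $\cl C_{\deter}$ of distinct $0/1$-valued deterministic correlations, hence a polytope whose extreme points are exactly $\cl C_{\deter}$. The set $\cl C(\cl G)$ is cut out of $\cl C_{\loc}$ by the equations $p(a,b|x,y)=0$ for $(x,y,a,b)\in N(\lambda)$; as these coordinates are nonnegative on $\cl C_{\loc}$, each such equation defines a supporting hyperplane, so $\cl C_{\loc}(\lambda)=\cl C_{\loc}\cap\cl C(\cl G)$ is a face of $\cl C_{\loc}$, and the extreme points of a face are precisely the vertices lying in it, namely $\cl C_{\deter}\cap\cl C(\cl G)=\cl C_{\deter}(\lambda)$. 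Since the affine homeomorphism $p\mapsto s_p$ of (iv) carries extreme points to extreme points, $\cl C_{\deter}(\lambda)$ is mapped bijectively onto the extreme points of $\cl P_{\rm omin}(\jlambda)$, which is (v).
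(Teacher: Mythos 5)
Most of your proposal is sound, and on parts (ii)--(v) it essentially reproduces the paper's route: the decomposition into unconstrained correspondences followed by intersection with the linear condition $\jlambda\subseteq\ker(\cdot)$ is exactly how the theorem is organised; your residual-finite-dimensionality argument for the reverse direction of (iii) (Exel--Loring for the free product $\cl A(X,A)$, faithfulness of the block-diagonal representation, weak$^*$-density of convex combinations of vector states of a faithful representation) is precisely the content of \cite[Theorem 2.9]{paulsen_quantum_2015}, which the paper invokes at that point; your (iv) is identical to the paper's; and your polytope/face argument for (v) is a clean equivalent of the paper's direct extreme-point computation. The one imprecision outside (i) is your claim that the representation $s(u\otimes v)=\langle\xi|\alpha(u)\beta(v)|\xi\rangle$ of states on $\sxa\otimes_{\cc}\syb$ is ``the defining property'' of the commuting tensor product: it is a theorem, obtained from the complete order embedding $\sxa\otimes_{\cc}\syb\subseteq_{\rm c.o.i.}\cl A(X,A)\otimes_{\max}\cl A(Y,B)$ of \cite[Lemma 2.6]{paulsen_quantum_2015} together with the GNS construction --- which is exactly how the paper argues (ii).

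The genuine gap is the forward implication of (i), and you flag it yourself (``making this reduction precise is where the real work lies''): you never show that an arbitrary non-signalling collection $p$ makes $s_p$ positive on the maximal cone. The missing idea is a concrete identification of the dual operator system of $\sxa$. By \cite[Theorem 5.9]{fkpt_NYJ} (together with \cite{CE2} and \cite[Proposition 6.2]{kptt2010}), $\sxa^{\dd}$ is completely order isomorphic to
$$\cl R_{X,A}=\left\{(z_{x,a})\in\ell^{\infty}(X\times A):\sum_{a\in A}z_{x,a}=\sum_{a\in A}z_{x',a}\ \mbox{ for all }x,x'\in X\right\},$$
via the pairing $\langle(z_{x,a}),e_{x',a'}\rangle=z_{x',a'}$. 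Combining this with the finite-dimensional duality $(\cl S\otimes_{\max}\cl T)^{\dd}\cong\cl S^{\dd}\otimes_{\min}\cl T^{\dd}$ of \cite[Proposition 1.9]{fp} and the injectivity of $\otimes_{\min}$ yields a complete order embedding of $(\sxa\otimes_{\max}\syb)^{\dd}\cong\cl R_{X,A}\otimes_{\min}\cl R_{Y,B}$ into $\ell^{\infty}(X\times A\times Y\times B)$, under which the functional $s_p$ becomes the tuple $(p(a,b|x,y))$ itself. Membership of this tuple in the subspace $\cl R_{X,A}\otimes\cl R_{Y,B}$ is exactly the pair of non-signalling identities (\ref{eq_yy'})--(\ref{eq_xx'}); positivity, being inherited from $\ell^{\infty}$, is exactly entrywise nonnegativity; and unitality is $\sum_{a,b}p(a,b|x,y)=1$. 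Hence $s_p$ is a state on $\sxa\otimes_{\max}\syb$ if and only if $p$ is a non-signalling correlation, settling both directions of (i) simultaneously. Without the identification $\sxa^{\dd}\cong\cl R_{X,A}$ --- i.e., without knowing that the matrix order dual to the coproduct structure is the one induced from $\ell^{\infty}(X\times A)$ --- the reduction of joint complete positivity of $s_p$ to the non-signalling identities that you gesture at cannot be carried out, so statement (i), the one part of the theorem with no C*-representation to fall back on, remains unproven in your proposal.
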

\begin{proof}
(i) 
Set 
$$\cl R_{X,A} = \left\{(z_{x,a})_{x\in X, a\in A} : 
\sum_{a\in A} z_{x,a} = \sum_{a\in A} z_{x',a}, \mbox{ for all } x,x'\in X\right\},$$
viewed as an operator subsystem of $\ell^{\infty}(X\times A)$. 
By \cite[Theorem 5.9]{fkpt_NYJ}, the dual operator system 
$\cl R_{X,A}^{\dd}$ of $\cl R_{X,A}$, which is again an operator system by finite dimensionality \cite[Theorem 4.4]{CE2}, 
is completely order isomorphic to $\sxa$, the duality being given by 
$$\langle (z_{x,a}), e_{x',a'}\rangle = z_{x',a'}.$$
By \cite[Proposition 6.2]{kptt2010} and the fact that $\sxa$ is finite-dimensional, $\mathcal{S}_{X,A}^{d} \cong \cl R_{X,A}$. 
By \cite[Propositon 1.9]{fp}, 
$$(\sxa\otimes_{\max}\syb)^{\dd} \cong \cl R_{X,A}\otimes_{\min}\cl R_{X,A}.$$
By the injectivity of the minimal tensor product, 
$\cl R_{X,A}\otimes_{\min}\cl R_{X,A}$ is (completely order isomorphic to) an operator subsystem of 
$\ell^{\infty}(X\times A \times Y \times B)$, and it is straightforward to check that the image 
of the state space of $\sxa\otimes_{\max}\syb$ under this isomorphism is precisely the set of all non-signalling correlations. 
Statement (i) is now clear. 

(ii) 
By \cite[Lemma 2.6]{paulsen_quantum_2015}, 
$\sxa\otimes_{\cc}\syb\subseteq_{\rm c.o.i.} \cl A(X,A)\otimes_{\max}\cl A(Y,B)$
and hence every state on $\sxa\otimes_{\cc}\syb$ extends to a state on $\cl A(X,A)\otimes_{\max}\cl A(Y,B)$.
The proof now follows the arguments in \cite[Theorem 2.8]{paulsen_quantum_2015}.

(iii) Here the proof follows the one of \cite[Theorem 2.9]{paulsen_quantum_2015}, using the fact that 
$\sxa\otimes_{\min}\syb\subseteq_{\rm c.o.i.} \cl A(X,A)\otimes_{\min}\cl A(Y,B)$. 

(iv) Note that 
\begin{equation}\label{eq_delta}
\sxamin\otimes_{\min}\sybmin \subseteq_{\rm c.o.i.} \ell^{\infty}(\Delta_{X,A}\times\Delta_{Y,B}).
\end{equation}
Suppose that $p_x^1 = (p^1(a|x))_{a\in A}$ is a probability distribution on $A$; 
then $p_x^1$ gives rise to a state $s_x^1$ on $\ell^{\infty}(A)$, $x\in X$. 
The product state $s^1 = \otimes_{x\in X} s_x^1$ on $\ell^{\infty}(X\times A)$ 
is given by 
$s_1\left(\sum_{x,a} \lambda_{x,a} e_{x,a}'\right) = \sum_{x,a} p^1(a|x) \lambda_{x,a}$. 
If, similarly, $p_y^2 = (p^2(b|y))_{b\in B}$ is a probability distribution on $B$, $y\in Y$, 
and $s^2$ is the associated state on $\ell^{\infty}(Y\times B)$, then 
the product state $s^1\otimes s^2$ on $\ell^{\infty}(X\times A\times Y \times B)$
arises from the families of product distributions $p^1_x\otimes p^2_y$, $x\in X$, $y\in Y$. 
It now follows that, if $p\in \cl C_{\loc}$ then $s_p$ is a state on $\sxamin\otimes_{\min}\sybmin$. 

Conversely, suppose that $s$ is a state on $\sxamin\otimes_{\min}\sybmin$. In view of (\ref{eq_delta}), 
$s$ has an extension, which we denote in the same way, to a state on $\ell^{\infty}(\Delta_{X,A}\times\Delta_{Y,B})$. 
It is thus a convex combination of pure states. On the other hand, 
if $s_0$ is a pure state of $\ell^{\infty}(\Delta_{X,A}\times\Delta_{Y,B})$ then there exists $(t_1,t_2)\in \Delta_{X,A}\times\Delta_{Y,B}$
such that $s_0(f) = f(t_1,t_2)$, $f\in \ell^{\infty}(\Delta_{X,A}\times\Delta_{Y,B})$. 
Thus, $s_0 = s_1\otimes s_2$, where $s_1$ (resp.\ $s_2$) is the state on $\ell^{\infty}(\Delta_{X,A})$ 
(resp.\ $\ell^{\infty}(\Delta_{Y,B})$) of evaluation on $t_1$ (resp.\ $t_2$), and it follows that
$p_s$ is a local correlation. 
Statement (iv) is now immediate. 

(v) Suppose that $p\in \cl C_{\deter}(\lambda)$. Clearly, $p$ is an extreme point of $\cl C_{\loc}(\lambda)$; 
by (iv), $s_p$ is an extreme point of $\cl P_{\rm omin}(J(\lambda))$. Conversely, suppose that $s_p$ is an extreme point of 
$\cl P_{\rm omin}(J(\lambda))$. 
By (iv), $p$ is an extreme point of $\cl C_{\loc}(\lambda)$. 
Suppose that $p = t p_1 + (1-t) p_2$, where 
$0\leq t \leq 1$ and $p_1, p_2\in \cl C_{\loc}$.
It follows that $p_1(a,b|x,y) = p_2(a,b|x,y) = 0$ whenever $\lambda(x,y,a,b) = 0$, and hence $p = p_1 = p_2$. 
Thus, $p$ is an extreme point of $\cl C_{\loc}$ and therefore belongs to $\cl C_{\deter}$. 
Thus, $p\in \cl C_{\deter}(\lambda)$. 
\end{proof}

We record, in the following two statements, some additional descriptions of 
the sets of quantum commuting and quantum approximate correlations that will be used in the sequel.

\begin{corollary}\label{th_qcgen}
Let $p$ be a non-signalling correlation on $X\times Y\times A\times B$.
The following statements are equivalent:

\begin{enumerate}
\item[(i)] $p\in \cl C_{\mathrm{qc}}$;

\item[(ii)] $p\left( a,b|x,y\right) =s\left( e_{x,a}\otimes f_{y,b}\right) $ for
some state $s$ on 
$\cl A(X,A)\otimes _{\max} \cl A(Y,B)$;

\item[(iii)] $p\left( a,b|x,y\right) =s\left( e_{x,a}\otimes f_{y,b}\right) $ for
some state $s$ on 
$\cl S_{X,A}\otimes _{\rm c} \cl S_{Y,B}$.
\end{enumerate}
\end{corollary}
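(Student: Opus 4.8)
The plan is to establish the chain of equivalences $(i)\Leftrightarrow(ii)\Leftrightarrow(iii)$ for a \emph{correlation} $p$ (not merely a non-signalling collection), so that the argument upgrades the isomorphism $\cl C_{\qc}(\lambda)\cong\cl P_{\cc}(\jlambda)$ from Theorem~\ref{th_cp}(ii) to a clean statement about $p\in\cl C_{\qc}$ itself. I would first prove $(i)\Leftrightarrow(iii)$ and then reduce $(iii)$ to $(ii)$ by an extension argument.

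For $(i)\Rightarrow(iii)$, start from a quantum commuting representation $p(a,b|x,y)=\langle\xi\,|\,P_{x,a}Q_{y,b}\,|\,\xi\rangle$ on a Hilbert space $\cl H$, with commuting PVMs $(P_{x,a})_a$ and $(Q_{y,b})_b$. The key observation is that, for each fixed $x$, the assignment $e_{x,a}\mapsto P_{x,a}$ is a unital completely positive map from the $x$-th copy of $\ell^\infty(A)$ into $\cl B(\cl H)$; by the coproduct universal property recalled in the Preliminaries, these assemble into a single unital completely positive map $\phi:\sxa\to\cl B(\cl H)$. Likewise one obtains $\psi:\syb\to\cl B(\cl H)$ from the $Q_{y,b}$. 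Since $P_{x,a}$ and $Q_{y,b}$ commute, the ranges of $\phi$ and $\psi$ commute, so by the defining property of the commuting tensor product $\otimes_{\cc}$, the pair $(\phi,\psi)$ linearises to a unital completely positive map $\Phi:\sxa\otimes_{\cc}\syb\to\cl B(\cl H)$ with $\Phi(e_{x,a}\otimes f_{y,b})=P_{x,a}Q_{y,b}$. Composing with the vector state $T\mapsto\langle\xi\,|\,T\,|\,\xi\rangle$ yields the required state $s$ on $\sxa\otimes_{\cc}\syb$ with $s(e_{x,a}\otimes f_{y,b})=p(a,b|x,y)$.

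For the converse $(iii)\Rightarrow(i)$, take a state $s$ on $\sxa\otimes_{\cc}\syb$. Using $\sxa\otimes_{\cc}\syb\subseteq_{\rm c.o.i.}\cl A(X,A)\otimes_{\max}\cl A(Y,B)$ from \cite[Lemma 2.6]{paulsen_quantum_2015}, extend $s$ (by Arveson-type extension for completely positive maps, valid since the inclusion is a complete order embedding) to a state on $\cl A(X,A)\otimes_{\max}\cl A(Y,B)$; this simultaneously delivers $(ii)$ and reduces the problem to representing a state on the maximal C*-tensor product. Apply the GNS construction to obtain a Hilbert space $\cl H$, a cyclic vector $\xi$, and commuting $*$-representations of $\cl A(X,A)$ and $\cl A(Y,B)$; the images of the canonical generators give commuting PVMs $(P_{x,a})_a$ and $(Q_{y,b})_b$, and the GNS formula recovers $p(a,b|x,y)=\langle\xi\,|\,P_{x,a}Q_{y,b}\,|\,\xi\rangle$, so $p\in\cl C_{\qc}$. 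The equivalence $(ii)\Leftrightarrow(iii)$ is then immediate: one direction is the extension just used, and the other follows by restricting a state on $\cl A(X,A)\otimes_{\max}\cl A(Y,B)$ to the operator subsystem $\sxa\otimes_{\cc}\syb$.

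I expect the main obstacle to be the two structural facts about the commuting tensor product—namely that it precisely linearises pairs of unital completely positive maps with commuting ranges, and that the inclusion $\sxa\otimes_{\cc}\syb\subseteq_{\rm c.o.i.}\cl A(X,A)\otimes_{\max}\cl A(Y,B)$ is a genuine complete order embedding (so that states extend). Both are quoted from the Preliminaries and from \cite[Lemma 2.6]{paulsen_quantum_2015}, so the real work is organisational: threading the coproduct property to build $\phi,\psi$ from the separate PVMs, and checking that commutativity of ranges is exactly what $\otimes_{\cc}$ encodes. Since this corollary closely parallels \cite[Theorem 2.8]{paulsen_quantum_2015}, much of it can be cited rather than re-derived.
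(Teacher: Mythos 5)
Your proposal is correct and follows essentially the same route as the paper: the same cyclic chain (i)$\Rightarrow$(iii)$\Rightarrow$(ii)$\Rightarrow$(i), with a Krein-type state extension through the complete order inclusion $\cl S_{X,A}\otimes_{\rm c}\cl S_{Y,B}\subseteq_{\rm c.o.i.}\cl A(X,A)\otimes_{\max}\cl A(Y,B)$ for (iii)$\Rightarrow$(ii) and a GNS argument for (ii)$\Rightarrow$(i). The only difference is presentational: where you spell out the coproduct and commuting-tensor-product universal properties to construct the state in (i)$\Rightarrow$(iii), the paper simply cites Theorem~\ref{th_cp}(ii), whose proof in turn invokes the same construction from \cite[Theorem 2.8]{paulsen_quantum_2015}.
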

\begin{proof}
(i)$\Rightarrow$(iii) follows from Theorem \ref{th_cp} (ii). 

(iii)$\Rightarrow$(ii) follows from Krein's Theorem and the fact that 
$\cl S_{X,A}\otimes _{\rm c} \cl S_{Y,B}\subseteq_{\rm c.o.i.} \cl A(X,A)\otimes _{\max} \cl A(Y,B)$ 
\cite[Lemma 2.7]{paulsen_quantum_2015}. 

(ii)$\Rightarrow$(i) is similar to the arguments in the proof of \cite[Lemma 2.7]{paulsen_quantum_2015}. 
\end{proof}

\begin{corollary}\label{th_qagen}
Let $p$ be a non-signalling correlation on $X\times Y\times A\times B$.
The following statements are equivalent:

\begin{enumerate}
\item[(i)] $p\in \cl C_{\mathrm{qa}}$;

\item[(ii)] 
$p\left( a,b|x,y\right) =s(e_{x,a}\otimes f_{y,b}) $ for
some state $s$ on $\cl A(X,A) \otimes _{\min} \cl A(Y,B)$;

\item[(iii)] 
$p\left( a,b|x,y\right) =s\left( e_{x,a}\otimes f_{y,b}\right) $ for
some state $s$ on $\cl S_{X,A} \otimes _{\min} \cl S_{Y,B}$.
\end{enumerate}
\end{corollary}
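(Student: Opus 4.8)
The plan is to mirror the proof of Corollary~\ref{th_qcgen}, replacing the commuting and maximal tensor products throughout by the minimal tensor product. The substantive identification of $\cl C_{\qa}$ with a state space has already been carried out in Theorem~\ref{th_cp}(iii), so the role of this corollary is simply to transport that statement from the operator system $\sxa\otimes_{\min}\syb$ to the ambient C*-algebra $\cl A(X,A)\otimes_{\min}\cl A(Y,B)$.

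First I would establish the equivalence (i)$\Leftrightarrow$(iii) by specialising Theorem~\ref{th_cp}(iii) to the rule function $\lambda\equiv 1$. Then $\jlambda=\{0\}$, so $\cl P_{\min}(\jlambda)$ is exactly the full state space of $\sxa\otimes_{\min}\syb$, while $\cl C_{\qa}(\lambda)=\cl C_{\qa}$; the affine isomorphism $p\mapsto s_p$ of that theorem then says precisely that $p\in\cl C_{\qa}$ if and only if $p(a,b|x,y)=s(e_{x,a}\otimes f_{y,b})$ for some state $s$ on $\sxa\otimes_{\min}\syb$. Here I would note that such an $s$ is forced to equal $s_p$, since the elements $e_{x,a}\otimes f_{y,b}$ span $\sxa\otimes_{\min}\syb$, so ``for some state'' and ``$s_p$ is a state'' say the same thing. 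This is exactly statement (iii).

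It remains to pass between (ii) and (iii). For (iii)$\Rightarrow$(ii) I would invoke the fact, recorded in the proof of Theorem~\ref{th_cp}(iii), that $\sxa\otimes_{\min}\syb\subseteq_{\rm c.o.i.}\cl A(X,A)\otimes_{\min}\cl A(Y,B)$; this is where the injectivity and functoriality of the minimal tensor product enter, guaranteeing that the operator-system minimal tensor product embeds completely order isomorphically into the C*-algebraic one. By Krein's extension theorem a state on the operator subsystem then extends to a state on the containing C*-algebra, and the extension agrees on each generator $e_{x,a}\otimes f_{y,b}$, yielding (ii). The converse (ii)$\Rightarrow$(iii) is immediate: restricting a state on $\cl A(X,A)\otimes_{\min}\cl A(Y,B)$ to the subsystem $\sxa\otimes_{\min}\syb$ again produces a state, positivity and unitality being inherited through the c.o.i.\ embedding, and it takes the prescribed values on the generators.

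I do not expect any genuine obstacle inside this corollary, since every step is a routine transcription of the argument for Corollary~\ref{th_qcgen} with $\min$ in place of $\cc$ and $\max$. The real content sits one level below, in Theorem~\ref{th_cp}(iii): were one to prove that theorem from scratch, the delicate direction would be showing that an \emph{arbitrary} state on the minimal tensor product arises as a limit of correlations in $\cl C_{\qq}$, which relies on the weak-* density of vector states coming from finite-dimensional product representations --- equivalently, on the residual finite-dimensionality of the free-product C*-algebras $\cl A(X,A)\cong C^*(\bb{F}(X,A))$. Granting Theorem~\ref{th_cp}(iii), the present corollary is purely formal.
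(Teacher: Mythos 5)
Your proof is correct, and two of its three legs coincide with the paper's: the paper likewise obtains (i)$\Rightarrow$(iii) from Theorem~\ref{th_cp}(iii), and (iii)$\Rightarrow$(ii) from Krein's theorem together with the complete order embedding $\sxa\otimes_{\min}\syb\subseteq_{\rm c.o.i.}\cl A(X,A)\otimes_{\min}\cl A(Y,B)$ supplied by injectivity of the minimal tensor product. Where you genuinely deviate is in closing the cycle. The paper proves (ii)$\Rightarrow$(i) directly, by re-running the arguments of \cite[Theorem 2.9]{paulsen_quantum_2015} to pass from a state on the C*-algebraic minimal tensor product to a quantum approximate correlation; you instead prove (ii)$\Rightarrow$(iii) by restricting the state to the operator subsystem, and then deduce (iii)$\Rightarrow$(i) from the \emph{bijectivity} of $p\mapsto s_p$ asserted in Theorem~\ref{th_cp}(iii), using the (correct) observation that the elementary tensors $e_{x,a}\otimes f_{y,b}$ span $\sxa\otimes\syb$, so any state realising $p$ on generators must equal $s_p$. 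Both closures are valid. Yours quarantines all analytic content inside Theorem~\ref{th_cp}(iii), so the corollary becomes purely formal and the external approximation argument is invoked only once; the paper's route uses only one implication of Theorem~\ref{th_cp}(iii) but pays for it by citing the C*-algebraic argument a second time. Your concluding remark correctly locates the genuine difficulty one level down, in the approximation of arbitrary states on the minimal tensor product by quantum correlations, which indeed rests on finite-dimensional representations of the residually finite-dimensional algebras $\cl A(X,A)\cong C^*(\bb F(X,A))$.
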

\begin{proof}
(i)$\Rightarrow$(iii) follows from Theorem \ref{th_cp} (iii). 

(iii)$\Rightarrow$(ii) follows from Krein's Theorem and the fact that, by the injectivity of the 
minimal tensor product,  
$\cl S_{X,A}\otimes _{\min} \cl S_{Y,B}\subseteq_{\rm c.o.i.} \cl A(X,A)\otimes _{\min} \cl A(Y,B)$. 

(ii)$\Rightarrow$(i) follows by the arguments in the proof of 
\cite[Theorem 2.9]{paulsen_quantum_2015}. 
\end{proof}


In this rest of this section, we give a more precise description of perfect strategies for a special class of games, 
which we now introduce. 
If $\cl G_1 = (X,Y,A,B,\lambda_1)$ and $\cl G_2 = (X,Y,A,B,\lambda_2)$ are games, 
we write $\cl G_1\leq \cl G_2$ if $\lambda_1\leq \lambda_2$,
and in this case say that $\cl G_1$ is \emph{harder} (or \emph{smaller}) than $\cl G_2$. 

Let $\xx\in \{\det, \loc, {\rm qm}, {\rm q}, \qs, \qa, \qc, \ns\}$. For $\Sigma\subseteq \cl C_{\xx}$, 
let $\lambda_{\Sigma} : X\times Y\times A \times B\to \{0,1\}$ be the function defined by the equality 
$$N(\lambda_{\Sigma}) = \cap_{p\in \Sigma} N(p).$$
Clearly, $\lambda_{\Sigma}$ is the rule function of the hardest game for which every element of $\Sigma$ is 
a perfect strategy.

Let $\cl G = (X,Y,A,B,\lambda)$ be a game. 
We set $\lambda_{\xx} = \lambda_{\cl C_{\xx}(\lambda)}$. Thus, 
$$\lambda\mbox{}_{\xx}(x,y,a,b) = 0 \ \Longleftrightarrow \ p(a,b|x,y) = 0 \mbox{ for every } p\in \cl C\mbox{}_{\xx}(\lambda).$$
Set $\Ref_{\xx}(\cl G) = (X,Y,A,B,\lambda_{\xx})$ 
and call it the \emph{reflexive $\xx$-cover} of $\cl G$. 
We call $\cl G$ \emph{$\xx$-reflexive} if $\Ref_{\xx}(\cl G) = \cl G$.

Note the inequalities
$$\lambda\mbox{}_{\det}\leq \lambda\mbox{}_{\loc} \leq \lambda\mbox{}_{\rm qm} \leq \lambda\mbox{}_{\qq} 
\leq \lambda\mbox{}_{\qs}
\leq \lambda\mbox{}_{\qa}\leq \lambda\mbox{}_{\qc}\leq \lambda\mbox{}_{\ns}\leq \lambda.$$

\begin{example}\label{ex_detnonr}
{\rm Consider the graph colouring game for the graph $G$ with vertex set $X = \{1,2,3,4\}$ 
and edge set $\{(1,2), (2,3), (3,4)\}$. Then every deterministic $2$-colouring of $G$ is also a deterministic 
$2$-colouring of the $4$-cycle. This shows that the reflexive covers of a game can be strictly harder than the original game.}
\end{example}

\begin{proposition}\label{p_refref}
Let $\cl G = (X,Y,A,B,\lambda)$ be a game and 
$\xx\in \{\det, \loc, {\rm qm}, {\rm q},$ $\qs, \qa, \qc, \ns\}$. The following hold:
\begin{itemize}
\item[(i)] 
$\Ref_{\xx}(\Ref_{\xx}(\cl G)) = \Ref_{\xx}(\cl G)$.
\item[(ii)] $\cl G$ is $\xx$-reflexive if and only if there exists a set $\Sigma\subseteq \cl C_{\xx}$ 
such that $\lambda = \lambda_{\Sigma}$.
\end{itemize}
\end{proposition}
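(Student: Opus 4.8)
The plan is to reduce both parts to a single observation: the set of perfect $\xx$-strategies is unchanged when one passes to the reflexive cover, i.e.\ $\cl C_{\xx}(\lambda) = \cl C_{\xx}(\lambda_{\xx})$. I would establish this first. The inclusion $\cl C_{\xx}(\lambda_{\xx})\subseteq \cl C_{\xx}(\lambda)$ follows from the already-recorded inequality $\lambda_{\xx}\le\lambda$, which gives $N(\lambda)\subseteq N(\lambda_{\xx})$, so that $N(\lambda_{\xx})\subseteq N(p)$ forces $N(\lambda)\subseteq N(p)$. For the reverse inclusion, any $p\in\cl C_{\xx}(\lambda)$ is one of the terms in the intersection $N(\lambda_{\xx})=\bigcap_{q\in\cl C_{\xx}(\lambda)}N(q)$, whence $N(\lambda_{\xx})\subseteq N(p)$ and $p\in\cl C_{\xx}(\lambda_{\xx})$.

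Part (i) is then immediate. By definition $N((\lambda_{\xx})_{\xx})=\bigcap_{p\in\cl C_{\xx}(\lambda_{\xx})}N(p)$, and since $\cl C_{\xx}(\lambda_{\xx})=\cl C_{\xx}(\lambda)$ by the observation, this intersection equals $\bigcap_{p\in\cl C_{\xx}(\lambda)}N(p)=N(\lambda_{\xx})$; thus $(\lambda_{\xx})_{\xx}=\lambda_{\xx}$ and $\Ref_{\xx}(\Ref_{\xx}(\cl G))=\Ref_{\xx}(\cl G)$.

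For part (ii), the forward direction is trivial: if $\cl G$ is $\xx$-reflexive then $\lambda=\lambda_{\xx}=\lambda_{\cl C_{\xx}(\lambda)}$, so $\Sigma=\cl C_{\xx}(\lambda)\subseteq\cl C_{\xx}$ works. For the converse, suppose $\lambda=\lambda_{\Sigma}$ for some $\Sigma\subseteq\cl C_{\xx}$. The key step is to note that every $p\in\Sigma$ satisfies $N(\lambda)=N(\lambda_{\Sigma})=\bigcap_{q\in\Sigma}N(q)\subseteq N(p)$, so $\Sigma\subseteq\cl C_{\xx}(\lambda)$. Intersecting $N(\cdot)$ over the larger index set then yields $N(\lambda_{\xx})=\bigcap_{p\in\cl C_{\xx}(\lambda)}N(p)\subseteq\bigcap_{p\in\Sigma}N(p)=N(\lambda)$, while $\lambda_{\xx}\le\lambda$ gives the opposite inclusion; hence $\lambda_{\xx}=\lambda$ and $\cl G$ is $\xx$-reflexive.

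There is no serious analytic obstacle here: none of the operator-system machinery from Theorem \ref{th_cp} is needed, and the argument is purely order-theoretic bookkeeping with the sets $N(\cdot)$. The only points requiring care are keeping the directions of the inclusions straight (smaller rule function $\leftrightarrow$ larger $N$-set $\leftrightarrow$ fewer perfect strategies) and observing that intersecting over a larger index family shrinks the intersection. Conceptually, the content is that $\lambda\mapsto\lambda_{\xx}$ is a closure operator at the level of the sets $N(\cdot)$ (monotone, idempotent, with $N(\lambda)\subseteq N(\lambda_{\xx})$), whose fixed points are exactly the $\xx$-reflexive games, and exactly the rule functions of the form $\lambda_{\Sigma}$.
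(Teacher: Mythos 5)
Your proof is correct and follows essentially the same route as the paper's: both rest on the observation that $\cl C_{\xx}(\lambda) = \cl C_{\xx}(\lambda_{\xx})$, from which (i) follows by re-intersecting the sets $N(p)$, and both handle (ii) by taking $\Sigma = \cl C_{\xx}(\lambda)$ in one direction and noting $\Sigma \subseteq \cl C_{\xx}(\lambda)$ in the other. Your framing of $\lambda \mapsto \lambda_{\xx}$ as a closure operator is a nice conceptual gloss, but the underlying argument is the same order-theoretic bookkeeping.
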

\begin{proof}
(i) Clearly, $N(\lambda)\subseteq N(\lambda_{\xx})$, and so $\cl C_{\xx}(\lambda_{\xx})\subseteq \cl C_{\xx}(\lambda)$. 
Suppose that $p\in  \cl C_{\xx}(\lambda)$ 
and $(x,y,a,b)\in N(\lambda_{\xx})$. By the definition of $\lambda_{\xx}$, we have that 
$p(a,b|x,y) = 0$. 
Thus, $p\in \cl C_{\xx}(\lambda_{\xx})$ and so $\cl C_{\xx}(\lambda) = \cl C_{\xx}(\lambda_{\xx})$. 
It now follows that $\lambda_{\xx} = \lambda_{\xx\xx}$, that is, $\Ref_{\xx}(\cl G) = \Ref_{\xx}(\Ref_{\xx}(\cl G))$.

(ii) If $\cl G$ is reflexive then we can take $\Sigma = \cl C_{\xx}(\lambda)$. 
Conversely, suppose that $\lambda = \lambda_{\Sigma}$ for some $\Sigma\subseteq \cl C_{\xx}$. 
The inclusion $N(\lambda)\subseteq N(\lambda_{\xx})$ follows by the definition of $\lambda_{\xx}$. 
On the other hand, since $\Sigma\subseteq \cl C_{\xx}(\lambda)$, we have
$$\cap\{N(p) : p\in \cl C\mbox{}_{\xx}(\lambda)\}\subseteq \cap\{N(p) : p\in \Sigma\},$$
that is, $N(\lambda_{\xx})\subseteq N(\lambda)$. Thus, $N(\lambda_{\xx}) = N(\lambda)$
and $\cl G$ is $\xx$-reflexive. 
\end{proof}

A \emph{kernel} in an operator system $\cl S$ \cite{kptt2010} is a subspace $J\subseteq \cl S$
for which there exist an operator system $\cl T$ and a completely positive map $\phi : \cl S \to \cl T$ such that 
$J = \ker(\phi)$. 
If $J\subseteq \cl S$ is a kernel, then the quotient linear space $\cl S/J$ can be equipped with 
a (unique) operator system structure with the property that, whenever $\cl T$ is an operator system and $\phi : \cl S\to \cl T$
is a completely positive map with $J\subseteq {\rm ker}(\phi)$, the induced map $\dot{\phi} : \cl S/ J\to \cl T$
is completely positive. 
For a fixed game $\cl G = (X,Y,A,B,\lambda)$ and $\tau \in \{\max, \cc, \min,\omin\}$, write 
$$J\mbox{}_{\tau}(\lambda) = \bigcap\{\ker(s) : s\in \cl P\mbox{}_{\tau}(\jlambda)\}.$$
By \cite[Proposition 3.1]{kptt2010}, $J_{\tau}(\lambda)$ is a kernel in 
$\cl S_{X,A}\otimes_{\tau} \cl S_{Y,B}$ in the case $\tau \in \{\max,{\rm c},\min\}$, and 
in  $\cl S^{\min}_{X,A}\otimes_{\min} \cl S^{\min}_{Y,B}$ in the case $\tau = {\rm omin}$.

\begin{theorem}\label{th_quotient}
Let $\cl G = (X,Y,A,B,\lambda)$ be a game. Then
\begin{equation}\label{eq_refmax}
\lambda_{\ns}(x,y,a,b) = 0 \ \Longleftrightarrow \  e_{x,a}\otimes e_{y,b} \in J_{\max}(\lambda);
\end{equation}
\begin{equation}\label{eq_refmax1}
\lambda_{\qc}(x,y,a,b) = 0 \ \Longleftrightarrow \  e_{x,a}\otimes e_{y,b} \in J_{\cc}(\lambda);
\end{equation}
\begin{equation}\label{eq_refmax2}
\lambda_{\qa}(x,y,a,b) = 0 \ \Longleftrightarrow \  e_{x,a}\otimes e_{y,b} \in J_{\min}(\lambda);
\end{equation}
\begin{equation}\label{eq_refmax3}
\lambda_{\det}(x,y,a,b) = 0 \ \Longleftrightarrow \  e_{x,a}\otimes e_{y,b} \in J_{\omin}(\lambda).
\end{equation}

Moreover, the map $p \to s_p$ defines a one-to-one correspondence between
\begin{itemize}
\item[(i)] 
the perfect non-signalling strategies for $\Ref_{\ns}(\cl G)$ and the states 
on $(\sxa\otimes_{\max}\syb)/J_{\max}(\lambda)$;
\item[(ii)] 
the perfect non-signalling strategies for $\Ref_{\qc}(\cl G)$ and the states 
on $(\sxa\otimes_{\rm c}\syb)/J_{\cc}(\lambda)$;
\item[(iii)] 
the perfect non-signalling strategies for $\Ref_{\qa}(\cl G)$ and the states 
on $(\sxa\otimes_{\min}\syb)/J_{\min}(\lambda)$;
\item[(iv)] 
the perfect non-signalling strategies for $\Ref_{\loc}(\cl G)$ and the states 
on $(\sxamin\otimes_{\min}\sybmin) / J_{\omin}(\lambda)$. Also, $\lambda_{\det} = \lambda_{\loc}$.
\end{itemize}
\end{theorem}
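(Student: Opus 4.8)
The plan is to read off every assertion from Theorem~\ref{th_cp}, the definition of the kernels $J_\tau(\lambda)$, and the universal property of the operator system quotient, handling the four tensor products uniformly. I would proceed in three stages: first the identity $\lambda_{\det}=\lambda_{\loc}$, then the membership equivalences (\ref{eq_refmax})--(\ref{eq_refmax3}), and finally the four bijections. I begin with $\lambda_{\det}=\lambda_{\loc}$. The inequality $\lambda_{\det}\le\lambda_{\loc}$, that is $N(\lambda_{\loc})\subseteq N(\lambda_{\det})$, is already recorded and follows from $\cl C_{\det}(\lambda)\subseteq\cl C_{\loc}(\lambda)$. For the reverse inclusion I would use that $\cl C_{\loc}$ is the convex hull of the finite set $\cl C_{\det}$: writing $p\in\cl C_{\loc}(\lambda)$ as a convex combination $p=\sum_i t_ip_i$ of deterministic correlations with all $t_i>0$, the requirement $p(a,b|x,y)=0$ for $(x,y,a,b)\in N(\lambda)$ forces $p_i(a,b|x,y)=0$ for every $i$ by nonnegativity, so each $p_i\in\cl C_{\det}(\lambda)$ and hence $\cl C_{\loc}(\lambda)=\mathrm{conv}\,\cl C_{\det}(\lambda)$. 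Consequently any $(x,y,a,b)$ annihilated by all of $\cl C_{\det}(\lambda)$ is annihilated by all of $\cl C_{\loc}(\lambda)$, giving $N(\lambda_{\det})\subseteq N(\lambda_{\loc})$ and therefore equality.

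The equivalences (\ref{eq_refmax})--(\ref{eq_refmax3}) are then direct unwindings of the definitions. Using the affine isomorphism $p\to s_p$ of Theorem~\ref{th_cp}(i) together with $s_p(e_{x,a}\otimes f_{y,b})=p(a,b|x,y)$, I obtain that $\lambda_{\ns}(x,y,a,b)=0$ holds iff $p(a,b|x,y)=0$ for all $p\in\cl C_{\ns}(\lambda)$, iff $s(e_{x,a}\otimes f_{y,b})=0$ for all $s\in\cl P_{\max}(\jlambda)$, iff $e_{x,a}\otimes f_{y,b}\in\bigcap_s\ker(s)=J_{\max}(\lambda)$. Substituting parts (ii), (iii), (iv) of Theorem~\ref{th_cp} and the kernels $J_{\cc},J_{\min},J_{\omin}$ yields (\ref{eq_refmax1}), (\ref{eq_refmax2}), and the equivalence of $e_{x,a}\otimes f_{y,b}\in J_{\omin}(\lambda)$ with $\lambda_{\loc}(x,y,a,b)=0$, which becomes (\ref{eq_refmax3}) once we invoke $\lambda_{\det}=\lambda_{\loc}$; here I would note that in the $\omin$ case the relevant generators are the $e'_{x,a}$ and $f'_{y,b}$ of $\sxamin$ and $\sybmin$.

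For the bijections the common mechanism is that, over the appropriate ambient operator system $\cl S_\tau$ (namely $\sxa\otimes_\tau\syb$ for $\tau\in\{\max,\cc,\min\}$ and $\sxamin\otimes_{\min}\sybmin$ for $\tau=\omin$), a state $s$ satisfies $\jlambda\subseteq\ker(s)$ if and only if $J_\tau(\lambda)\subseteq\ker(s)$. Indeed $\jlambda\subseteq J_\tau(\lambda)$, because every $s\in\cl P_\tau(\jlambda)$ kills $\jlambda$ while $J_\tau(\lambda)$ is the intersection of the kernels of all such $s$; the opposite implication is immediate from the definition of $J_\tau(\lambda)$. Since $J_\tau(\lambda)$ is a kernel by \cite[Proposition 3.1]{kptt2010} and is proper (states being unital), the universal property of the operator system quotient identifies the states on $\cl S_\tau/J_\tau(\lambda)$ with the states on $\cl S_\tau$ annihilating $J_\tau(\lambda)$, hence with $\cl P_\tau(\jlambda)$. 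By Theorem~\ref{th_cp} and the proof of Proposition~\ref{p_refref}(i), $\cl P_\tau(\jlambda)\cong\cl C_{\xx}(\lambda)=\cl C_{\xx}(\lambda_{\xx})$, the set of perfect $\xx$-strategies of $\Ref_{\xx}(\cl G)$; composing, $p\mapsto s_p$ followed by passage to the quotient is the asserted bijection in each of (i)--(iv).

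Most of this is routine once Theorem~\ref{th_cp} is available; the one step carrying genuine content is $\lambda_{\det}=\lambda_{\loc}$, where I must exploit that the deterministic correlations are exactly the extreme points of the polytope $\cl C_{\loc}$ and that the zero-pattern constraints defining $\cl C_{\loc}(\lambda)$ cut out a face of it. The only point in the last stage requiring care is the bookkeeping that states on the quotient correspond to $\cl P_\tau(\jlambda)$ itself, which is precisely what the two-sided equivalence $\jlambda\subseteq\ker(s)\Leftrightarrow J_\tau(\lambda)\subseteq\ker(s)$ secures.
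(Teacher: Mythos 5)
Your proposal is correct and follows essentially the same route as the paper's own proof: the same convex-decomposition-into-deterministic-correlations argument for $\lambda_{\det}=\lambda_{\loc}$, the same unwinding of Theorem~\ref{th_cp} for the membership equivalences, and the same two-sided equivalence $\jlambda\subseteq\ker(s)\Leftrightarrow J_{\tau}(\lambda)\subseteq\ker(s)$ combined with the universal property of the operator system quotient for the four correspondences. The only cosmetic differences are that you prove $\lambda_{\det}=\lambda_{\loc}$ first (which is indeed the cleaner order, since (\ref{eq_refmax3}) is phrased in terms of $\lambda_{\det}$), and that you obtain injectivity structurally, as a composition of bijections, rather than by the paper's explicit evaluation of $s_p$ and $s_{p'}$ on the cosets $e_{x,a}\otimes f_{y,b}+J_{\max}(\lambda)$.
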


\begin{proof}
The equivalences (\ref{eq_refmax})-(\ref{eq_refmax3}) follow from Theorem \ref{th_cp}. 

(i) We have that $p$ is a perfect non-signalling strategy for $\Ref_{\ns}(\cl G)$ if and only if 
$p$ is a perfect non-signalling strategy for $\cl G$, if and only if 
$s_p$ annihilates $J(\lambda)$, if and only if $s_p$ annihilates 
$J_{\max}(\lambda)$, if and only if $s_p$ induces a state on the quotient operator system 
$(\sxa\otimes_{\max}\syb)/J_{\max}(\lambda)$. 

Now suppose that $p \ne p^{\prime}$ are two perfect non-signaling strategies for $\Ref_{\ns}(\cl G)$, then because their corresponding states are well-defined on the quotient, we have that for some $a,b,x,y$,
\[
s_p( e_{x,a} \otimes f_{y,b} + J_{\max}(\lambda))= p(a,b|x,y)  \ne p^{\prime}(a,b|x,y)= s_{p^{\prime}}(e_{x,a} \otimes f_{y,b} + J_{\max}(\lambda) ),
\]
which shows that the correspondence is one-to-one. The proofs of the correspondence for
(ii), (iii), and (iv) are similar to (i).

Finally, to see that $\lambda_{\det}= \lambda_{\loc}$, let $p\in \cl C_{\loc}$. By Theorem \ref{th_cp}, there exist $l\in \bb{N}$, 
$p_k\in \cl C_{\deter}$, and $t_k\in (0,1]$, $k = 1,\dots, l$, such that $p = \sum_{k=1}^l t_k p_k$. 
It follows that $N(p) = \cap_{k=1}^l N(p_k)$. 
Thus, 
$$\cap \{N(p) : p\in \cl C_{\loc}(\lambda)\} = \cap \{N(p) : p\in \cl C_{\deter}(\lambda)\},$$
and hence $\lambda_{\loc} = \lambda_{\deter}$. 
\end{proof}

In view of Example \ref{ex_detnonr}, it is natural to consider chromatic non-signalling covers of graphs. 
Let $G$ be a graph and $\xx\in \{\det, \loc, {\rm qm}, {\rm q}, \qs, \qa, \qc, \ns\}$. 
As customary, let $\chi_{\xx}(G)$ be the ${\rm x}$-chromatic number of $G$ (see \cite{paulsen_quantum_2015}). 
The \emph{chromatic $\xx$-cover} 
${\rm Chrom}_{\xx}(G)$ of $G$ 
is the largest supergraph that is coloured by every $\chi_{\xx}(G)$-colouring of $G$.
We write ${\rm Chrom}(G) = {\rm Chrom}_{\det}(G)$. 
Example \ref{ex_detnonr} shows that ${\rm Chrom}(G)$ 
can be strictly larger than $\chi(G)$.

\begin{corollary}\label{c_same}
For any graph $G$, we have that ${\rm Chrom}(G) = {\rm Chrom}_{\loc}(G)$.
\end{corollary}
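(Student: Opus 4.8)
The plan is to reduce the statement to the identity $\lambda_{\det} = \lambda_{\loc}$ established in Theorem \ref{th_quotient}(iv), applied to the graph colouring game. The only genuine ingredient is the convex-combination bookkeeping for local correlations; no further operator-algebraic input is needed.

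First I would record that the two covers are built from \emph{the same} game. The chromatic $\xx$-cover uses $\chi_{\xx}(G)$ colours, so I must check that $\chi_{\det}(G) = \chi_{\loc}(G)$. This follows from the elementary observation, already exploited in the proof of Theorem \ref{th_quotient}(iv), that if $p = \sum_{k=1}^{l} t_k p_k$ with $t_k \in (0,1]$ and $p_k \in \cl C_{\deter}$, then $N(p) = \bigcap_{k=1}^{l} N(p_k)$. Hence a perfect local strategy for the $c$-colouring game on $G$ exists if and only if a perfect deterministic one does, so $\chi_{\loc}(G) = \chi_{\det}(G) =: c$, and both ${\rm Chrom}(G)$ and ${\rm Chrom}_{\loc}(G)$ are obtained from the single $c$-colouring game $\cl G = (X,Y,A,B,\lambda)$, where $X = Y = V(G)$ and $A = B$ is the set of $c$ colours.

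Second, I would translate the definition of the chromatic cover into the language of rule functions. Writing $\lambda$ for the rule function of $\cl G$, a supergraph $H \supseteq G$ on the same vertex set is coloured by a strategy $p$ (that already colours $G$) precisely when $p(a,a \mid u,v) = 0$ for every colour $a$ and every newly added edge $\{u,v\}$, the synchronicity and edge constraints of $G$ being already satisfied. Consequently, $\{u,v\}$ is an edge of the largest such supergraph ${\rm Chrom}_{\xx}(G)$ if and only if $p(a,a \mid u,v) = 0$ for every $p \in \cl C_{\xx}(\lambda)$ and every colour $a$, that is, if and only if $(u,v,a,a) \in N(\lambda_{\xx})$ for all $a$. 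Thus the edge set of ${\rm Chrom}_{\xx}(G)$ is read off entirely from the reflexive rule function $\lambda_{\xx}$ of $\cl G$.

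Finally, Theorem \ref{th_quotient}(iv) gives $\lambda_{\det} = \lambda_{\loc}$, so the two edge sets determined in the previous step coincide and ${\rm Chrom}(G) = {\rm Chrom}_{\loc}(G)$. I expect the only point requiring care to be the bookkeeping in the second paragraph: one must observe that the synchronicity constraints of the colouring game force each deterministic perfect strategy to be a genuine proper $c$-colouring (so that the objects ${\rm Chrom}(G)$ and ${\rm Chrom}_{\loc}(G)$ really are graphs and the reflexive cover data restricts cleanly to the edge constraints $(u,v,a,a)$).
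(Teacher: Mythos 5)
Your proposal is correct and follows essentially the same route as the paper: the corollary is stated there as an immediate consequence of Theorem \ref{th_quotient}(iv), i.e.\ of the identity $\lambda_{\det}=\lambda_{\loc}$, applied to the colouring game. Your write-up simply makes explicit the two pieces of bookkeeping the paper leaves implicit --- that $\chi_{\det}(G)=\chi_{\loc}(G)$ (via the same convex-decomposition fact $N(p)=\bigcap_k N(p_k)$ used in the proof of Theorem \ref{th_quotient}), so both covers come from one and the same game, and that the edge set of ${\rm Chrom}_{\xx}(G)$ is determined by the entries $(u,v,a,a)$ of $N(\lambda_{\xx})$ --- both of which are accurate.
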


\begin{example}\label{ex_qcnonr}
Let $G$ and $H$ be graphs. 
If $u$ and $v$ are adjacent vertices, we will write $u\sim v$. 
A \emph{graph homomorphism} from $G$ to $H$ is a function $\varphi: V(G) \to V(H)$ that preserves adjacency, i.e., 
such that if $u, v \in V(G)$ and $u\sim v$, then 
$\varphi(u) \sim \varphi(v) \in V(H)$. 
The \emph{$(G,H)$-homomorphism game} (see~\cite{qhoms,Rthesis}) has input sets $X = Y = V(G)$,
output sets $A = B = V(H)$, and rule function $\lambda$ is given by
\[
\lambda(x,y,a,b) = \begin{cases}
0 & \text{if } (x = y \ \& \ a \ne b) \text{ or } (x\sim y \ \& \ a\not\sim b) \\
1 & \text{otherwise.}
\end{cases}
\]
In other words, the two players are given vertices of $G$ and must respond with vertices of $H$. If they both receive the same vertex of $G$, then they must answer with the same vertex of $H$. If they are given adjacent vertices of $G$, they must respond with adjacent vertices of $H$.
It is easy to notice that the deterministic strategies for the $(G,H)$-homomorphism game
are in one-to-one correspondence with the homomorphisms from $G$ to $H$.

A \emph{walk} of length $\ell$ in a graph $G$ is a sequence $u_0,u_1, \ldots, u_\ell$ of vertices of $G$ 
such that $u_{i-1} \sim u_{i}$ for all $i \in \{1, \ldots, \ell\}$. 
We say that this is a walk from $u_0$ to $u_\ell$, and call $u_0$ and $u_\ell$ the endpoints of the walk.
Suppose that $\varphi$ is a homomorphism from $G$ to $H$. 
Clearly, if $u_0, \ldots, u_\ell$ is a walk of length $\ell$ in $G$, then 
$\varphi(u_0), \ldots, \varphi(u_\ell)$ is a walk of length $\ell$ in $H$. 
Therefore, if $u,v \in V(G)$ are the endpoints of a walk of length $\ell$ in $G$, 
then $\varphi(u), \varphi(v) \in V(H)$ are the endpoints of a walk of length $\ell$ in $H$. 
It follows that $\lambda_{{\rm loc}} (x,y,a,b) = 0$ when there is a walk of length $\ell$ 
with endpoints $x,y \in V(G)$, but there is no walk of length $\ell$ with endpoints $a,b \in V(H)$. 
It follows from this that, for some choices of $G$ and $H$, 
the $(G,H)$-homomorphism game is not loc-reflexive. 
In fact, a special case of this is given in Example~\ref{ex_detnonr}. There, the graph $G$ is the path on four vertices, and $H$ is the complete graph on two vertices. In $G$, vertices 1 and 4 have a walk of length 3 between them. In $H$, there is no walk of length 3 between a vertex and itself, and so vertices 1 and 4 must be mapped to distinct vertices of $H$. 

Surprisingly, it was shown in~\cite{qhoms} that a similar fact holds even for quantum strategies for the homomorphism game. More precisely, if $p \in \mathcal{C}_q$, is a perfect correlation for the $(G,H)$-homomorphism game, and $x,y \in V(G)$ are 
the endpoints of a walk of length $\ell$ in $G$, 
then $p(a,b|x,y) = 0$ unless $a,b \in V(H)$ are the endpoints of a walk of length $\ell$ in $H$. 
Therefore, as in the deterministic/local case, we have that $\lambda_{{\rm q}}(x,y,a,b) = 0$ 
whenever there is a walk of length $\ell$ in $G$ with endpoints $x,y \in V(G)$, 
but there is no walk of length $\ell$ in $H$ with endpoints $a,b \in V(H)$. 
This can be used to show that, for certain graphs $G$ and $H$, the $(G,H)$-homomorphism game is not q-reflexive.

Let us consider the special case where $x = y$ and there is a walk of length $\ell$ beginning and ending at $x \in V(G)$ (this is called a \emph{closed} walk). In this case, if $p$ is a perfect q-strategy, we have that $p(a,b|x,x) = 0$ unless $a = b$ (this is by definition) and $a$ is contained in a closed walk of length $\ell$. Furthermore, since $p(a,b|x,x) = 0$ whenever $b \ne a$, we have that,
if $p(a|x)$ is the corresponding marginal probability 
(given by $p(a|x) = \sum_{b \in V(H)}p(a,b|x,x)$) then $p(a|x) = p(a,a|x,x)$. 
Therefore, for any perfect q-strategy $p$ 
of the $(G,H)$-homomorphism game, if $x \in V(G)$ is contained in a closed walk of length $\ell$, then we have that the marginal $p(a|x)$ is equal to zero unless $a \in V(H)$ is also contained in a closed walk of length $\ell$. Thus, if Alice (or Bob) receive a vertex contained in a closed walk of length $\ell$, then they must respond with a vertex contained in a closed walk of length $\ell$ if they are employing a perfect q-strategy. This implies that $\lambda_{{\rm q}}(a,b|x,y) = 0$ if $x \in V(G)$ is contained in a closed walk of length $\ell$, but $a \in V(H)$ is not, regardless of the values of $y$ and $b$. This is noteworthy because it is an example where $\lambda_{\rm{q}}$ has additional zeros 
that depend only on the input and output of a single party, even though $\lambda$ has no such zeros.

The above remarks show that there are many homomorphism games which are not q-reflexive. 
However, the proof of the above fact about walks that was given for q-strategies in~\cite{qhoms} works just as well for qc-strategies. So these homomorphism games will also not be qc-reflexive. Next we will see an example of a game that is not even ns-reflexive.
\end{example}

\begin{example}\label{ex_nsnonr}
The \emph{$(G,H)$-isomorphism game} is similar to the homomorphism game described in Example~\ref{ex_qcnonr} but, 
in it, both adjacency and non-adjacency must be preserved. More precisely, in the $(G,H)$-isomorphism game, the input and output sets for both players are all equal to $V(G) \cup V(H)$, where the vertex sets of the two graphs are assumed to be disjoint. If Alice (Bob) receives a vertex from $G$, she (he) must respond with a vertex from $H$, and vice versa. If this first condition of the game is met, then Alice either receives as input or sends as output a vertex $g_A$ of $G$. Similarly, Alice sends or receives a vertex $h_A$ of $H$. We can define $g_B$ and $h_B$ analogously for Bob. The remaining rule of the $(G,H)$-isomorphism game is that these four vertices must satisfy $\text{rel}(g_A,g_B) = \text{rel}(h_A,h_B)$, where $\text{rel}$ is a function determining whether two vertices are equal, adjacent, or distinct and non-adjacent. 
All of these conditions are encoded in the rule function $\lambda$ for the $(G,H)$-isomorphism game.

A partition $P_1, \ldots, P_k$ of the vertex set $V(G)$ 
of a graph $G$ is said to be equitable if there exist integers $c_{ij}$ for $i,j \in \{1, \ldots, k\}$ such that each vertex in $P_i$ is adjacent to precisely $c_{ij}$ vertices in $P_j$. We refer to the numbers $c_{ij}$ as the \emph{partition numbers} of the partition $P_1, \ldots, P_k$. For example, the trivial partition into singletons is always equitable for any graph. Also, the single 
element partition is equitable for a graph $G$ if and only if every vertex of $G$ has the same number of neighbors, i.e.
if and only of $G$ is \emph{regular}. In general, given two equitable partitions of a graph $G$, their \emph{join} will be an equitable partition of $G$. It follows that any graph has a unique (up to permutation of the parts) coarsest equitable partition. Suppose that $P_1, \ldots, P_k$ and $Q_1, \ldots, Q_\ell$ are the coarsest equitable partitions of $G$ and $H$ with partition numbers $c_{ij}$ for $i,j \in \{1, \ldots, k\}$, and $d_{ij}$ for $i,j \in \{1, \ldots, \ell\}$, respectively. 
We say that $G$ and $H$ have \emph{common coarsest equitable partitions} if $k = \ell$, and (up to some permutation of the indices) $|P_i| = |Q_i|$ for all $i = 1, \ldots, k$, and $c_{ij} = d_{ij}$ for all $i,j \in \{1, \ldots, k\}$.
In~\cite{qiso}, it was shown that there exists a perfect ns-strategy for the $(G,H)$-isomorphism game if and only if the graphs have \emph{common coarsest equitable partitions}. 

Now suppose that $G$ and $H$ are graphs with common coarsest equitable partitions $P_1, \ldots, P_k$ and $Q_1, \ldots, Q_k$ respectively. Though it is not explicitly stated in~\cite{qiso}, it follows from the proof of 
Lemma~4.2 therein and the proof of \cite[Theorem 2.2]{RSU}, 
that in any perfect ns-strategy $p$ for the $(G,H)$-isomorphism game, the marginal probability $p(h|g) = p(h,h|g,g)$
vanishes unless $g \in P_i$ and $h \in Q_i$ for some $i \in \{1, \ldots, k\}$. Therefore, we obtain that $\lambda_{\rm {ns}}(g,g',h',h') = 0$ if $g \in P_i$ and $h \in Q_j$ with $i \ne j$, regardless of the values of $g'$ and $h'$. Thus, if $k \ge 2$, then the $(G,H)$-isomorphism game is not ns-reflexive. This occurs if and only if the graphs $G$ and $H$ are not regular.
\end{example}


\section{Imitation games\label{Section:games}}

In this section, we introduce a new class of games that we call {\it imitation games}, give examples, and establish some first properties.

\begin{definition}
\label{Definition:imitation}
A game $\cl G = (X,Y,A,B,\lambda)$ will be called
an \emph{imitation game} if
\begin{itemize}
\item[(a)] for every $x\in X$ and $a,a^{\prime }\in
A $ with $a\neq a^{\prime }$, there exists $y\in Y$ such that
\begin{equation*}
\sum_{b\in B}\lambda \left( a,b|x,y\right) \lambda \left( a^{\prime},b|x,y\right) = 0, 
\end{equation*}
and 
\item[(b)] 
for every $y\in Y$ and $b,b^{\prime }\in B$ with $b\neq b^{\prime }$, there exists $x\in X$ such that
\begin{equation*}
\sum_{a\in A}\lambda \left( a,b|x,y\right) \lambda \left( a,b^{\prime}|x,y\right) =0\text{.}
\end{equation*}
\end{itemize}
\end{definition}

Conditions (a) and (b) in Definition \ref{Definition:imitation} assert that, in some sense, in a perfect
strategy for $\mathcal{G}$, Alice's answers are
completely determined by Bob's answers and vice versa.

Given an imitation
game $\mathcal{G}$, we define a corresponding C*-algebra $C^{\ast }\left( 
\mathcal{G}\right) $ as follows.

\begin{definition}
\label{Definition:algebra}
Let $\cl G$ be an imitation game. 
The C*-algebra $C^{\ast }\left( \mathcal{G}\right)$ of the imitation game $\mathcal{G}$ is the universal unital C*-algebra
generated by families $\left( p_{x,a}\right) _{x\in X,a\in A}$ and $\left(
q_{y,b}\right) _{y\in Y,b\in B}$ of projections satisfying the relations:

\begin{itemize}
\item[(a)] $\sum_{a\in A} p_{x,a} = 1$ for every $x\in X$;

\item[(b)] $\sum_{b\in B} q_{y,b} = 1$ for every $y\in Y$;

\item[(c)] $\lambda _{\mathcal{G}}(x,y,a,b) = 0$ implies $p_{x,a} q_{y,b} = 0$,  
for any $(x,y,a,b) \in X\times Y\times A\times B$.
\end{itemize}
\end{definition}

For convenience, in Definition \ref{Definition:algebra} we consider the zero
C*-algebra $\left\{ 0\right\} $ to be a unital C*-algebra. It is possible
that $C^{\ast }\left( \mathcal{G}\right) =\left\{ 0\right\} $, in which case
we say that the C*-algebra of the game is zero. 

We now provide examples of
several classes of games that are particular instances of imitation games.

\begin{example}
\label{Example:synchronicity}Suppose that $A=B$ and $X=Y$. 
The \emph{synchronicity game} $\mathcal{G}^{\mathrm{s}}$ is
obtained by setting $\lambda _{\mathcal{G}^{\mathrm{s}}}\left(
a,b,x,y\right) =0$ if and only if $x=y$ and $a\neq b$. The correlations in 
$\cl C\left( \mathcal{G}^{\mathrm{s}}\right) $ are called \emph{synchronous
correlations }and denoted by $\cl C^{\mathrm{s}}$.
\end{example}

\begin{example}
\label{Example:synchronous}
Any game $\mathcal{G}$, with $X = Y$ and $A = B$, 
harder than the synchronicity game, is called \emph{synchronous}.
It is clear that any synchronous game is an imitation game.
In this case, the C*-algebra of the game $\mathcal{G}$ as in Definition \ref{Definition:algebra} coincides with the 
C*-algebra of the game as defined in 
\cite{helton_algebras_2017}. The class of synchronous games contains in
particular the graph coloring games and the graph homomorphism games
introduced and studied in \cite%
{cameron_quantum_2007,qhoms,Rthesis,paulsen_estimating_2016,paulsen_quantum_2015,ortiz_quantum_2016}%
. In the case of the graph homomorphism game, the game C*-algebra as in
Definition \ref{Definition:algebra} recovers the C*-algebra considered in 
\cite[Section 4]{ortiz_quantum_2016}.
\end{example}

\begin{example}
\label{Example:BCS}Suppose that $Y$ is a finite set of \emph{variables} $%
\left\{ v_{1},\ldots ,v_{n}\right\} $. A \emph{binary constraint} in $%
v_{1},\ldots ,v_{n}$ is an expression of the form $f((v_{j})_{j\in V})=1$
for some $V\subseteq \left\{ 1,2,\ldots ,n\right\}$, where $f:\left\{
-1,1\right\} ^{V}\rightarrow \left\{ -1,1\right\} $ is a function. A \emph{%
binary constraint system} $\mathcal{S}$ is a set $X$ of such binary
constraints with the property that every variable appears in some
constraint. In \cite{cleve_characterization_2014}, a game $\mathcal{G}_{%
\mathcal{S}}$ has been associated with a binary constraint system, as
follows. The input sets are $X$ and $Y$ as above, while the output sets are $%
B=\left\{+1,-1\right\}$ and 
$A = \left\{ \left( a_{i}\right) _{i\in V}:V\subseteq \left\{ 1,2,\ldots ,n\right\} ,a_{i}\in \left\{+1,-1\right\}\right\}$. 
The payoff function is defined by 
$\lambda \left(x,v_{j},\left(a_{i}\right) _{i\in V},b\right) =1$ if and only if $x$ is a
constraint of the form $f((v_{k})_{k\in V}) =1$, the tuple $(a_k)_{k\in V}$ satisfies the constraint $x$, $j\in V$, and $b=a_{j}$. A particular example of a binary constraint system game is the Mermin-Peres magic square game \cite{holevo}
\end{example}

\begin{example}
\label{Example:variables}
Suppose that $C$ is a finite set of possible \emph{values}
for some \emph{variables} $v_{1},\ldots ,v_{n}$. As before, we let $X$ and $Y$ represent
the set of possible questions for Alice and Bob, respectively. To each $z\in
X\cup Y$ we assigns a subset $V_{z}$ of $\left\{ 1,2,\ldots ,n\right\} $,
in such a way that, for any $i\in \left\{ 1,2,\ldots ,n\right\}$, there exist 
$x\in X$ and $y\in Y$ such that $i\in V_{x}\cap V_{y}$. We let the sets $A$ and $B$ be
both equal to the set of tuples 
$\left(c_{i}\right) _{i\in V}$ for some $V\subseteq \left\{ 1,2,\ldots ,n\right\}$,
which we can think of as a valuation of the variables $v_{i}$ for $i\in V$.
A \emph{\ variable assignment game} $\cl G$ on $(X,Y,A,B)$ is any game whose 
payoff function $\lambda _{\mathcal{G}}:X\times Y\times A\times B \rightarrow \left\{0,1\right\} $ satisfies 
the conditions 
\begin{itemize}
\item[(a)] $\lambda(x, y, (a_{i})_{i\in V}, (b_{j}) _{j\in W}) =1$ implies 
$V=V_{x}$ and $W = V_{y}$, and 
\item[(b)] $a_{i} = b_{i}$ for every $i\in V_{x}\cap V_{y}$. 
\end{itemize}
It is clear that any binary constraint system game is a variable assignment game.

We show that every variable assignment game is an imitation game.
Suppose that $x\in X$ and $a,a'\in A$ with $a\neq a'$, say $a = (a_i)_{i\in V}$ and $a' = (a'_i)_{i\in V'}$, 
where $V$ and $V'$ are subsets of $\{1,\dots,n\}$. 
If $V\neq V'$ then either $V\neq V_x$, in which case choosing an arbitrary $y\in Y$ we have that 
$\lambda(x,y,a,b) = 0$ for all $b\in B$, or $V'\neq V_x$, in which case choosing an arbitrary $y\in Y$ we have that 
$\lambda(x,y,a',b) = 0$ for all $b\in B$. 
Suppose that $V = V'$, and choose $i\in V$ such that $a_i \neq a_i'$. Let $y\in Y$ be such that 
$i\in V_x\cap V_y$. Let $b = (b_j)_{j\in W}\in B$. If $W\neq V_y$ then clearly
$\lambda(x,y,a,b) = 0$. If, on the other hand, $W = V_y$ then 
$\lambda(x,y,a,b) \lambda(x,y,a',b) = 0$. By symmetry, $\cl G$ is an imitation game. 
\end{example}

\begin{example}
\label{Example:mirror}
We call a non-signalling game $\cl G = (X,Y,A,B,\lambda)$ a \emph{mirror game} if 
there exist functions $\xi : X\to Y$ and $\eta : Y\to X$ such that
$$\lambda(x,\xi(x),a,b) \lambda(x,\xi(x),a',b) = 0, \ \ \ x\in X, \ a,a'\in A, \ b\in B, \ a\neq a'$$
and 
$$\lambda(\eta(y),y,a,b) \lambda(\eta(y),y,a,b') = 0, \ \ \ y\in Y, \ a\in A, \ b,b'\in B, \ b\neq b'.$$
Clearly, every mirror game is an imitation game, and the difference between the two classes consists in that,
given $x\in X$ (resp.\ $y\in Y$) and $a,a'\in A$ (resp.\ $b,b'\in B$) with $a\neq a'$ (resp.\ $b\neq b'$),
the element $y\in Y$ (resp.\ $x\in X$) satisfying condition (a) (resp.\ (b)) in Definition \ref{Definition:imitation}
depends on $a$ and $a'$ (resp.\ $b$ and $b'$) for general imitation games, and is independent of 
them for mirror games. 
\end{example}

\begin{example}
\label{Example:unique}
Recall that a non-signalling game $\cl G = (X,Y,A,B,\lambda)$ is called \emph{unique} (see e.g.\ \cite{rao})
if for every $(x,y)\in X\times Y$ there exists a bijection $\phi_{x,y} : A\to B$ such that 
$$\lambda(x,y,a,b) = 1 \ \Longleftrightarrow \ b = \phi_{x,y}(a).$$
Every unique game is a mirror game; indeed, given $x\in X$, 
any element $y$ of $Y$ satisfies (a) in Definition \ref{Definition:imitation} for any choice of $a,a'\in A$ with $a\neq a'$; 
the claim follows by symmetry.
\end{example}

\begin{proposition}
\label{Prop:variable} Let $\mathcal{G} = (X,Y,A,B,\lambda)$ be a variable assignment
game with $n$ variables and set of variable values $C$. 
Its game C*-algebra $C^{\ast }\left( \mathcal{G}\right)$ is *-isomorphic to 
the universal C*-algebra generated by a family $\{e_{i,c} : i = 1,\dots,n, c\in C\}$ of projections 
subject to the relations
\begin{itemize}
\item[(a)] $\sum_{c\in C} e_{i,c} = 1$, $i=1,2,\ldots ,n$;

\item[(b)] $e_{i,c} e_{j,d} = e_{j,d} e_{i,c}$ for all $c,d\in C$, 
whenever there exists $z\in X\cup Y$ such that $i,j\in V_{z}$;

\item[(c)] the projections $\prod_{i\in V_{x}}e_{i,a_{i}}$ and $\prod_{j\in V_{y}}e_{j,b_{j}}$ are
orthogonal whenever 
$\lambda(x,y,\left( a_{i}\right) _{i\in V_{x}},\left(b_{i}\right) _{i\in V_{y}})= 0$.
\end{itemize}
\end{proposition}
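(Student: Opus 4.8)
The plan is to construct explicit *-isomorphisms in both directions by matching generators, exploiting the combinatorial structure of the variable assignment game. Let me denote by $\cl D$ the universal C*-algebra presented by the generators $\{e_{i,c}\}$ and relations (a)--(c), and recall that $C^*(\cl G)$ is generated by projections $(p_{x,a})_{x,a}$ and $(q_{y,b})_{y,b}$ satisfying the relations of Definition \ref{Definition:algebra}. The key observation driving the proof is that in any perfect strategy the outputs $a = (a_i)_{i\in V_x}$ record a valuation of the variables indexed by $V_x$; so a single generator $p_{x,a}$ of $C^*(\cl G)$ should correspond to the product $\prod_{i\in V_x} e_{i,a_i}$ in $\cl D$, the simultaneous assignment of each variable $v_i$ (for $i\in V_x$) to the value $a_i$. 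The main point is that relation (b) of the proposition guarantees these variable projections commute whenever the indices co-occur in some question, so such products are themselves projections and are well-behaved.

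First I would build the map $\pi : C^*(\cl G) \to \cl D$. The natural assignment is $p_{x,a} \mapsto \prod_{i\in V_x} e_{i,a_i}$ and $q_{y,b} \mapsto \prod_{j\in V_y} e_{j,b_j}$, where by convention the empty product is $1$. To see this is well-defined by the universal property of $C^*(\cl G)$, I must check that these images satisfy Definition \ref{Definition:algebra}(a)--(c). For (a), I would verify $\sum_{a\in A}\prod_{i\in V_x} e_{i,a_i} = 1$; since all variables indexed by $V_x$ co-occur in the question $x$, relation (b) of the proposition makes the $e_{i,c}$ for $i\in V_x$ a commuting family, and relation (a) of the proposition lets me expand $\prod_{i\in V_x}\bigl(\sum_{c} e_{i,c}\bigr) = 1$, which reorganises into the required sum over all valuations $a$ of $V_x$. (One must note that only valuations $a$ with domain exactly $V_x$ can map to nonzero products, matching the convention implicit in condition (a) of the variable assignment game.) Relation (b) of Definition \ref{Definition:algebra} is the mirror statement for $q_{y,b}$, and relation (c) transfers directly from relation (c) of the proposition. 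Conversely, I would build $\rho : \cl D \to C^*(\cl G)$ by sending each $e_{i,c}$ to a suitable spectral projection of the $p_{x,a}$'s: fix for each $i$ a question $x\in X$ with $i\in V_x$ (which exists by the defining hypothesis on variable assignment games) and set $\rho(e_{i,c}) = \sum_{a : a_i = c} p_{x,a}$.

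The step I expect to be the main obstacle is showing that $\rho$ is well-defined and is a genuine inverse to $\pi$. The delicate points are \emph{independence of the choice of question}: the projection $\sum_{a:a_i=c} p_{x,a}$ must not depend on which $x\ni i$ we picked, and when two variables $i,j$ both lie in some common $V_z$, the corresponding projections must commute as required by relation (b) of the proposition. I would establish the independence by using relation (c) of Definition \ref{Definition:algebra} together with the non-signalling consistency forced by the game rules --- concretely, for two questions $x, x'$ both containing $i$, one links $p_{x,a}$ and $p_{x',a'}$ through a common Bob question $y$ with $i\in V_y$ (such $y$ exists by hypothesis), using relation (c) to force $p_{x,a} q_{y,b} = 0$ and $p_{x',a'} q_{y,b} = 0$ unless the $i$-th coordinates agree, so that the sums over fibres with fixed value at $i$ coincide. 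Commutativity of $\rho(e_{i,c})$ and $\rho(e_{j,d})$ when $i,j\in V_z$ follows by choosing the \emph{same} witnessing question $z$ for both variables, whence both projections are diagonal blocks of one PVM $(p_{z,a})_a$ and hence commute. Finally I would verify $\pi\circ\rho = \mathrm{id}$ and $\rho\circ\pi = \mathrm{id}$ on generators, which reduces to the same fibre-summation identities; by the universal properties this upgrades to a *-isomorphism, completing the proof.
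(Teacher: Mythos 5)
Your proposal follows essentially the same route as the paper: the same homomorphism $\pi$ (with $p_{x,a}\mapsto\prod_{i\in V_x}e_{i,a_i}$ for $a\in C^{V_x}$ and $p_{x,a}\mapsto 0$ otherwise), the same candidate inverse on generators, $e_{i,c}\mapsto g_{i,c}^{x}:=\sum\{p_{x,a}: a\in C^{V_x},\ a_i=c\}$, and the same plan of proving independence of the choice of $x$ by comparing with $h_{i,c}^{y}:=\sum\{q_{y,b}: b\in C^{V_y},\ b_i=c\}$ for a Bob question $y$ with $i\in V_y$. The gap is at the one step you assert rather than prove: ``so that the sums over fibres with fixed value at $i$ coincide.'' What relation (c) of Definition \ref{Definition:algebra} gives you directly is only $g_{i,c}^{x}h_{i,d}^{y}=0$ for $c\neq d$ (sum the relations $p_{x,a}q_{y,b}=0$ over pairs with $a_i\neq b_i$); the equality $g_{i,c}^{x}=h_{i,c}^{y}$ is not a formal consequence of this orthogonality alone, and deducing it is exactly where the paper's proof does its real work. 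The paper fixes a faithful representation $C^*(\mathcal{G})\subseteq\mathcal{B}(\mathcal{H})$, observes that $1=\left(\sum_{c}g_{i,c}^{x}\right)\left(\sum_{d}h_{i,d}^{y}\right)=\sum_{c}g_{i,c}^{x}h_{i,c}^{y}$, and then, for each unit vector $\xi$, applies the equality case of the Cauchy--Schwarz inequality to the two unit vectors $(g_{i,c}^{x}\xi)_{c\in C}$ and $(h_{i,c}^{y}\xi)_{c\in C}$ of $\oplus_{c\in C}\mathcal{H}$ to conclude $g_{i,c}^{x}\xi=h_{i,c}^{y}\xi$, whence $g_{i,c}^{x}=h_{i,c}^{y}$ and then $g_{i,c}^{x}=g_{i,c}^{x'}$ for all $x,x'$ containing $i$.

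Two further points are needed to make even that argument run, and you flag only one of them. The PVM identities $\sum_{c}g_{i,c}^{x}=1$ and $\sum_{d}h_{i,d}^{y}=1$ require knowing that the ``degenerate'' generators vanish in $C^*(\mathcal{G})$: for $a=(a_i)_{i\in V}$ with $V\neq V_x$, condition (a) of a variable assignment game forces $\lambda(x,y,a,b)=0$ for all $y,b$, so $p_{x,a}=p_{x,a}\sum_{b\in B}q_{y,b}=0$, and similarly $q_{y,b}=0$ when the domain of $b$ is not $V_y$; you note the analogous issue for $\pi$, but it is equally essential for $\rho$, since the sums defining $g_{i,c}^{x}$ and $h_{i,c}^{y}$ run only over valuations with the correct domains. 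Once these facts are in place, the missing implication can in fact be closed purely algebraically, without any Hilbert space: $g_{i,c}^{x}=g_{i,c}^{x}\sum_{d}h_{i,d}^{y}=g_{i,c}^{x}h_{i,c}^{y}=\sum_{d}g_{i,d}^{x}h_{i,c}^{y}=h_{i,c}^{y}$. With that lemma supplied, the remainder of your outline (commutativity of $\rho(e_{i,c})$ and $\rho(e_{j,d})$ via a common witnessing question, verification of the universal relations, and mutual inversion of $\pi$ and $\rho$ on generators) coincides with the paper's proof.
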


\begin{proof}
Let $\mathcal{U}$ be the universal C*-algebra
as in the statement, and fix a faithful *-representation $C^{\ast }\left( \mathcal{G}\right) \subseteq
\cl B\left( \mathcal{H}\right)$. 
By the universal property of $C^*(\cl G)$, there exists a canonical *-homomorphism 
$\pi :C^{\ast}\left( \mathcal{G}\right) \rightarrow \mathcal{U}$ such that 
\begin{equation*}
\pi(p_{x,\left( a_{i}\right) _{i\in V}})= \left\{ 
\begin{array}{cc}
\prod_{i\in V_{x}}e_{i,a_{i}} & \text{if }V=V_{x}\text{,} \\ 
0 & \text{otherwise}
\end{array}
\right.
\end{equation*}
and
\begin{equation*}
\pi(q_{y,\left( b_{i}\right) _{i\in V}})= \left\{ 
\begin{array}{cc}
\prod_{i\in V_{y}}e_{i,a_{i}} & \text{if }V=V_{y}\text{,} \\ 
0 & \text{otherwise}
\end{array}
\right. \text{.}
\end{equation*}
We now define a *-homomorphism $\rho :\mathcal{U}\rightarrow
C^{\ast }\left( \mathcal{G}\right) $ which is the inverse of $\pi $. 
If $x\in X$, $i\in V_{x}$, and $c\in C$, let
\begin{equation*}
g_{i,c}^{x}=\sum \left\{ p_{x,a}:a\in C^{V_{x}},a_{i}=c\right\} \text{.}
\end{equation*}
Similarly if $y\in Y$, $i\in V_{y}$, and $c\in C$, let
\begin{equation*}
h_{i,c}^{y} = \sum \left\{ q_{y,b} : b\in C^{V_{y}},b_{i}=c\right\} \text{.}
\end{equation*}
It is clear that $g_{i,c}^x$ and $h_{i,c}^y$ are projections. 
Fix $x\in X$ and $y\in Y$. Observe that, for every $i\in V_{x}$ and $j\in V_{y}$,
\begin{equation}\label{eq_oneone}
\sum_{c\in C}g_{i,c}^{x} = \sum_{c\in C} h_{j,c}^{y} = 1\text{.}
\end{equation}
Suppose that $i\in V_{x}\cap V_{y}$. Since $p_{x,a}q_{y,b}=0$
whenever $a_{i}\neq b_{i}$, we have
\begin{equation}\label{eq_newone}
1 = \left(\sum_{c\in C}g_{i,c}^{x}\right) \left(\sum_{d\in C} h_{j,d}^{y}\right) = \sum_{c\in C}g_{i,c}^{x} h_{i,c}^{y}\text{.}
\end{equation}
Let $\left\vert \xi \right\rangle \in \mathcal{H}$ be a unit vector,
$\left\vert \alpha \right\rangle =\left(g_{i,c}^{x}\left\vert \xi \right\rangle \right) _{c\in C}$
and $\left\vert \beta \right\rangle =\left(h_{i,c}^{y}\left\vert \xi \right\rangle \right) _{c\in C}$. 
By (\ref{eq_oneone}), $\left\vert \alpha \right\rangle$ and $\left\vert \beta \right\rangle$ are unit vectors in 
$\oplus _{c\in C}\mathcal{H}$ while, by (\ref{eq_newone}), they satisfy the relation 
$\left\langle \alpha |\beta \right\rangle =1$. Thus $\alpha =\beta$, 
that is, $g_{i,c}^{x}\left\vert \xi \right\rangle = h_{i,c}^{y}\left\vert \xi
\right\rangle $ for every $\left\vert \xi \right\rangle \in \mathcal{H}$ and
hence $g_{i,c}^{x} = h_{i,c}^{y}$. Therefore $g_{i,c}^{x} = h_{i,c}^{y}$ for every 
$i\in V_{x}\cap V_{y}$ and every $c\in C$. We conclude that $g_{i,c}^{x} = g_{i,c}^{x^{\prime }} := g_{i,c}$ 
for any $x,x^{\prime }\in X$, $i\in V_{x}\cap V_{x^{\prime}}$, and $c\in C$. 
Since $(p_{x,a})_{a\in A}$ and $(q_{y,b})_{b\in B}$ are PVM's for all $x\in X$ and $y\in Y$, 
the family $\{g_{i,c} : i = 1,\dots,n, c\in C\}$ satisfies condition (b). 
Condition (c) is straightforward. 
Therefore the map $e_{i,c}\mapsto g_{i,c}$ extends to a
*-homomorphism $\rho :\mathcal{U}\rightarrow C^{\ast }\left( \mathcal{G}\right)$,
which is easily seen to be the inverse of $\pi$.
\end{proof}

A particular instance of a\emph{\ }binary constraint system is a \emph{%
linear }binary constraint system as considered in \cite{cleve_perfect_2016}%
. This is a binary constraint system $\mathcal{S}$ where the functions $%
f:\left\{ -1,1\right\} ^{V}\rightarrow \left\{ -1,1\right\} $ are of the
form $f\left( \left( \lambda _{i}\right) _{i\in V}\right) =\left( -1\right)
^{\rho }\prod_{i\in V}\lambda _{i}$ for some $\rho \in \left\{ 0,1\right\} $%
. The \emph{solution group }$\Gamma \left( \mathcal{S}\right) $ associated
to such a linear binary constraint system as in \cite{cleve_perfect_2016,
s_2016} is defined to be the group generated by
involutions $u_{1},\ldots ,u_{n},J$ subject to the following relations: $J$
commutes with $u_{1},\ldots ,u_{n}$, and $u_{i},u_{j}$ commute whenever a 
constraint of the form $\left( -1\right) ^{\rho }\prod_{k\in V}\lambda _{k}=1$, with $i,j\in V$, 
is an element of the input set $X$,
in which case $J^{\rho }\prod_{k\in V}u_{k}=1$.

We specialise Example \ref{Example:variables} to the case of a binary constraint system $\cl S$ and describe 
the canonical non-signalling game $\cl G_{\cl S}$ associated with it.
We have that $X$ is the set of all constraints and $C=\left\{ +1,-1\right\} $. For $x\in X$, $V_{x}$ is the
set of indices of variables that appear in the constraint $x$. We set 
$Y = \left\{ 1,2,\ldots ,n\right\} $ and $V_{y}=\left\{ y\right\} $ for $y\in Y$. 
The payoff function $\lambda$ is obtained by letting, for $x\in X$ and $j\in V_{x}$, where 
$x$ is the constraint $\left( -1\right) ^{\rho _{x}}\prod_{i\in V_{x}}\lambda
_{i}=1$, $\lambda \left( \left( a_{i}\right) _{i\in V_{x}},a_{j},x,j\right)
=1$ if and only if $\left( -1\right) ^{\rho _{x}}\prod_{i\in V_{x}}a_{x}=1$.

We denote by $C^{\ast }(\Gamma(\mathcal{S})) $ the
full group C*-algebra of the solution group $\Gamma(\mathcal{S})$ 
and identify $\Gamma(\mathcal{S})$ with a subgroup
of the unitary group of $C^*(\Gamma(\mathcal{S}))$. 
A natural question that arises, and is addressed in the next proposition, is 
what the relation between $C^*(\cl G_{\cl S})$ and $C^*(\Gamma(\mathcal{S}))$ is.

\begin{proposition}
\label{Prop:solution}
Let $\mathcal{S}$ be a linear binary constraint system with 
a corresponding non-signalling game $\mathcal{G}_{\mathcal{S}}$ and solution
group $\Gamma \left( \mathcal{S}\right) $. 
The game C*-algebra $C^{\ast}(\cl G_{\cl S}) $ is *-isomorphic to the quotient of the full
group C*-algebra $C^{\ast }\left( \Gamma \left( \mathcal{S}\right) \right)$ by the relation $J+1=0$.
\end{proposition}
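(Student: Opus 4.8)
The plan is to exhibit mutually inverse $*$-homomorphisms between $C^*(\cl G_{\cl S})$ and the quotient of $C^*(\Gamma(\cl S))$ by the ideal generated by $J+1$. I would begin by unwinding both universal objects into generators and relations. By Proposition \ref{Prop:variable}, $C^*(\cl G_{\cl S})$ is isomorphic to the universal C*-algebra generated by projections $\{e_{i,c} : i = 1,\dots,n,\ c\in\{+1,-1\}\}$ satisfying the three listed relations. Since $C = \{+1,-1\}$, each pair $e_{i,+1}, e_{i,-1}$ sums to $1$, so I would pass to the self-adjoint unitaries $u_i := e_{i,+1} - e_{i,-1}$; these are involutions with $u_i^2 = 1$, and the $e_{i,c}$ are recovered as spectral projections $e_{i,c} = \tfrac{1}{2}(1 + c\,u_i)$. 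Under this change of generators, relation (b) of Proposition \ref{Prop:variable} becomes commutation of $u_i$ and $u_j$ whenever $i,j \in V_x$ for some constraint $x$, and relation (c) becomes the statement that the constraint is satisfied, i.e.\ $(-1)^{\rho_x}\prod_{i\in V_x} u_i = 1$ for each constraint $x$.

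The key bookkeeping step is to translate relation (c) into the unitary language carefully. In $C^*(\cl G_{\cl S})$, relation (c) says the projections $\prod_{i\in V_x} e_{i,a_i}$ and $e_{j,b_j}$ are orthogonal whenever $\lambda(x,j,(a_i)_{i\in V_x}, b_j) = 0$. The definition of $\lambda$ for $\cl G_{\cl S}$ records exactly that any assignment $(a_i)_{i\in V_x}$ \emph{violating} the constraint $(-1)^{\rho_x}\prod_i a_i = 1$ gives $\lambda = 0$. Thus relation (c), combined with the fact that the projections $\{\prod_{i\in V_x} e_{i,a_i}\}_{a}$ form a PVM summing to $1$, forces $\prod_{i\in V_x} e_{i,a_i} = 0$ for every violating assignment, and this is precisely equivalent to the operator identity $(-1)^{\rho_x}\prod_{i\in V_x} u_i = 1$. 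I would verify this equivalence spectrally: the commuting involutions $\{u_i\}_{i\in V_x}$ generate a commutative C*-algebra whose spectrum consists of the sign assignments, and killing the violating joint eigenspaces is the same as imposing the product relation.

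With the generators and relations matched, I would set up the two maps by the universal properties. The solution group $\Gamma(\cl S)$ is generated by involutions $u_1,\dots,u_n, J$ with $J$ central, $u_i, u_j$ commuting when they co-occur in a constraint, and $J^{\rho}\prod_{k\in V} u_k = 1$ for each constraint. Imposing $J = -1$ in $C^*(\Gamma(\cl S))$ converts the relation $J^{\rho_x}\prod_k u_k = 1$ into $(-1)^{\rho_x}\prod_k u_k = 1$, which is exactly the unitary form of relation (c); the remaining relations match verbatim. Hence sending $u_i \mapsto u_i$ and $J \mapsto -1$ gives a well-defined $*$-homomorphism $C^*(\Gamma(\cl S)) \to C^*(\cl G_{\cl S})$ that factors through the quotient by $J+1$, and sending $u_i \mapsto u_i$ gives a $*$-homomorphism back from $C^*(\cl G_{\cl S})$ into the quotient. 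Each is surjective because the images contain the respective generating sets (the $e_{i,c}$ being polynomials in the $u_i$, and $-1$ being the image of $J$), and composing them in either order fixes generators, so they are mutually inverse.

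The main obstacle I anticipate is the equivalence in the second paragraph: showing rigorously that relation (c) of Proposition \ref{Prop:variable}, as a family of orthogonality relations among spectral projections, is equivalent to the single multiplicative relation $(-1)^{\rho_x}\prod_{i\in V_x} u_i = 1$. The subtlety is that one must use the completeness of the PVM $\{\prod_i e_{i,a_i}\}_a$ together with the commutativity of the $u_i$ within a single constraint (relation (b)) to conclude that annihilating all violating eigenprojections is equivalent to the product relation; without commutativity the product $\prod_i u_i$ need not even be self-adjoint, so relation (b) is essential here and must be invoked before relation (c) can be recast. Everything else is routine application of universal properties.
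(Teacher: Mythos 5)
Your proposal is correct and follows essentially the same route as the paper: both invoke Proposition \ref{Prop:variable} to present $C^*(\cl G_{\cl S})$ by the projections $e_{i,\pm 1}$, pass to the involutions $u_i = e_{i,+1}-e_{i,-1}$ (with $J\mapsto -1$), and produce mutually inverse $*$-homomorphisms via the universal properties, recovering the $e_{i,\pm1}$ as spectral projections of the $\hat u_i$ in the quotient. The only difference is one of detail: where the paper says the verification is easy, you spell out the spectral argument (using relation (b) for commutativity and the PVM completeness) showing that killing the violating joint eigenprojections is equivalent to the relation $(-1)^{\rho_x}\prod_{i\in V_x}u_i=1$, which is a worthwhile elaboration but not a different proof.
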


\begin{proof}
Let $J$ be the closed two sided ideal of $C^*(\Gamma(\mathcal{S}))$ generated by $J+1$ and 
$\mathcal{A} = C^*(\Gamma(\mathcal{S}))/J$. 
We denote by $\hat{u}%
_{1},\ldots ,\hat{u}_{n}$ the images inside $\mathcal{A}$ of the canonical
generators of $\Gamma \left( \mathcal{S}\right) $ under the canonical
quotient mapping $C^{\ast }\left( \Gamma \left( \mathcal{S}\right) \right)
\rightarrow \mathcal{A}$. We consider the description of $C^{\ast }\left( 
\mathcal{G}\right) $ as in Proposition \ref{Prop:variable}. 
(Recall that a 
binary constraint system game is, in particular, a variable assignment game
in the sense of Example \ref{Example:variables}, and therefore Proposition 
\ref{Prop:variable} applies.) Consider the assignment $u_{i}\mapsto
e_{i,+1}-e_{i,-1}$ and $J\mapsto -1$. It is easy to verify that this
defines a unitary representation of $\Gamma \left( \mathcal{S}\right) $, and
hence it extends to a *-homomorphism $\pi : C^{\ast }\left( \Gamma \left( \mathcal{%
S}\right) \right) \rightarrow C^{\ast }\left( \mathcal{G}\right) $. Since
such a *-homomorphism maps $J$ to $-1$, it induces a *-homomorphism $%
\mathcal{A}\rightarrow C^{\ast }\left( \mathcal{G}\right) $. Conversely, the
assignment $e_{i,+1}\mapsto \hat{u}_{i}^{+}$ and $e_{i,-1}\mapsto \hat{u}%
_{i}^{-}$ defines a *-homomorphism $C^{\ast }\left( \mathcal{G}\right)
\rightarrow \mathcal{A}$, where $\hat{u}_{i}^{+}$ and $\hat{u}_{i}^{-}$
denote the spectral subspaces of the selfadjoint unitary $\hat{u}_{i}$
associated with $+1$ and $-1$, respectively. It is clear from the definition
that the *-homomorphisms $\pi$ and $\rho$ are inverses of each other.
\end{proof}


\section{Perfect strategies for imitation games}\label{s_wsig}

In \cite{helton_algebras_2017} the existence of various types of perfect strategies for synchronous games was given characterisations in terms of types of traces on the game C*-algebra.
In this section, we extend those results to imitation games. Given an imitation game $\mathcal{G}$, we characterise the
elements of the set $\cl C_{\mathrm{qc}}\left( \mathcal{G}\right) $ of perfect quantum
commuting strategies, and of the set $\cl C_{\mathrm{q}}\left( \mathcal{G}\right)$ of 
perfect quantum strategies, 
for the game $\mathcal{G}$, in terms of traces on the game C*-algebra $C^{\ast }\left(\mathcal{G}\right)$.

\begin{theorem}
\label{Thm:qc}
Let $\mathcal{G} = (X,Y,A,B,\lambda)$ be an imitation game and $p$ be a non-signalling 
correlation on $(X,Y,A,B)$. 
The following statements are equivalent:

\begin{itemize}
\item[(a)] $p\in C_{\mathrm{qc}}\left( \mathcal{G}\right) $;

\item[(b)] the C*-algebra of the game $C^{\ast }\left( \mathcal{G}\right) $ is
nonzero, and there exists a tracial state $\tau $ on $C^{\ast }\left( 
\mathcal{G}\right) $ such that $p\left( a,b|x,y\right) =\tau \left(
p_{x,a}q_{y,b}\right) $.
\end{itemize}
\end{theorem}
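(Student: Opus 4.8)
The plan is to establish the equivalence by exhibiting a dictionary between tracial states on $C^*(\cl G)$ and perfect quantum commuting strategies, using the representation-theoretic description of $\cl C_{\qc}(\cl G)$ provided by Corollary \ref{th_qcgen}. The key observation driving the proof is that the defining relations (a), (b), (c) of $C^*(\cl G)$ in Definition \ref{Definition:algebra} are exactly the constraints that a quantum commuting perfect strategy must satisfy, with the imitation-game hypothesis being what converts the ``states-on-a-tensor-product'' description into a ``trace-on-a-single-algebra'' description.

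For the direction (b)$\Rightarrow$(a), I would start from a tracial state $\tau$ on $C^*(\cl G)$ and perform the GNS construction to obtain a cyclic representation $(\pi,\cl H,\xi)$ with $\tau(\cdot)=\langle \pi(\cdot)\xi,\xi\rangle$. Setting $P_{x,a}=\pi(p_{x,a})$ and, crucially, using the trace property to realise Bob's projections as $Q_{y,b}=\pi(q_{y,b})$ acting on the \emph{same} Hilbert space with the commutation $P_{x,a}Q_{y,b}=Q_{y,b}P_{x,a}$, one recovers a quantum commuting model. The relations (a), (b) in Definition \ref{Definition:algebra} guarantee that the $(P_{x,a})_a$ and $(Q_{y,b})_b$ are PVMs, and relation (c) together with $\lambda(x,y,a,b)=0\Rightarrow p_{x,a}q_{y,b}=0$ forces $p(a,b|x,y)=\tau(p_{x,a}q_{y,b})=0$ on the forbidden entries, so the resulting correlation lies in $\cl C_{\qc}(\cl G)$. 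The commutation needed for the quantum commuting model is the point where the trace is used: in the GNS space of a trace, left and right multiplication operators commute, and this is the standard mechanism (as in \cite{paulsen_estimating_2016,helton_algebras_2017}) for producing commuting PVMs for the two players.

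The harder direction is (a)$\Rightarrow$(b). Here I would take $p\in\cl C_{\qc}(\cl G)$ and, via Corollary \ref{th_qcgen}, obtain a state $s$ on $\cl A(X,A)\otimes_{\max}\cl A(Y,B)$ with $p(a,b|x,y)=s(e_{x,a}\otimes f_{y,b})$. The task is to show that this state descends to (and is tracial on) $C^*(\cl G)$, which requires two things: first, that $s$ annihilates the ideal generated by the relation $\lambda(x,y,a,b)=0\Rightarrow p_{x,a}q_{y,b}=0$, so that $C^*(\cl G)\neq\{0\}$ and a well-defined functional exists; and second, that the imitation-game conditions (a), (b) of Definition \ref{Definition:imitation} force Alice's and Bob's algebras to be identified, so that $e_{x,a}\otimes f_{y,b}$ can be rewritten as a product $p_{x,a}q_{y,b}$ inside a single copy of $C^*(\cl G)$ on which $s$ becomes a \emph{trace}. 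Concretely, the imitation condition says that for $a\neq a'$ there is a $y$ with $\sum_b\lambda(a,b|x,y)\lambda(a',b|x,y)=0$; applying the perfectness of $p$ and the Cauchy--Schwarz inequality in the GNS representation of $s$ shows that the projection $P_{x,a}$ is determined by the $Q_{y,b}$'s (and vice versa), which is precisely the rigidity needed to collapse the two players' operators onto a common algebra and to verify the trace identity $\tau(p_{x,a}q_{y,b})=\tau(q_{y,b}p_{x,a})$.

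I expect the main obstacle to be the rigidity step: making precise the informal statement following Definition \ref{Definition:imitation} that ``Alice's answers are completely determined by Bob's answers and vice versa'' at the operator level, and showing this determinacy is strong enough to produce a genuine tracial state rather than merely a state. The delicate part is that one must show the operators $P_{x,a}$ and $Q_{y,b}$ can be taken to satisfy $P_{x,a}=Q_{y(a),a}$ (after suitable relabelling) in the GNS Hilbert space, using a vector-equality argument of the same flavour as equations (\ref{eq_oneone}) and (\ref{eq_newone}) in the proof of Proposition \ref{Prop:variable}: one forms the vectors $(P_{x,a}\xi)_a$ and $(Q_{y,b}\xi)_b$, shows they are unit vectors whose inner product equals $1$ under the perfectness and imitation hypotheses, and concludes equality. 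Once this identification is in place, the trace property and the nonvanishing of $C^*(\cl G)$ follow, and the correspondence $p\to\tau$ is the inverse of the GNS construction used in the other direction.
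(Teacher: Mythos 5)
Your direction (b)$\Rightarrow$(a) is correct and coincides with the paper's: one represents $C^*(\cl G)$ on $L^2(C^*(\cl G),\tau)$ with Alice acting by left multiplication and Bob by \emph{right} multiplication. Note only that your formula $Q_{y,b}=\pi(q_{y,b})$, read literally, puts both players in the same (left) representation, where the commutation $P_{x,a}Q_{y,b}=Q_{y,b}P_{x,a}$ generally fails; it is the left/right mechanism you invoke afterwards that produces commuting PVMs, the trace being what makes right multiplication a bounded $*$-representation of the opposite algebra.

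The genuine gap is the rigidity step in (a)$\Rightarrow$(b). The identification you propose, $P_{x,a}=Q_{y(a),a}$ after relabelling, is impossible for general imitation games: there need be no bijection between $A$ and $B$ at all. In a binary constraint system game (Example \ref{Example:BCS}) Alice answers with a tuple of values and Bob with a single sign, so no Alice projection can equal any single Bob projection. The rule function only provides a set-valued link: condition (a) of Definition \ref{Definition:imitation} gives, for each pair $a\neq a'$, \emph{some} $y$ (depending on that pair) for which the sets $E^a_{x,y}=\{b:\lambda(x,y,a,b)=1\}$ and $E^{a'}_{x,y}$ are disjoint; for a fixed $y$, the sets attached to other pairs may overlap. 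Consequently the inner-product-equals-one trick of Proposition \ref{Prop:variable} that you want to imitate breaks down: the tuple $\bigl(\sum_{b\in E^a_{x,y}}Q_{y,b}\,\xi\bigr)_{a\in A}$ need not be a unit vector, and Cauchy--Schwarz then forces no equality of vectors. The paper's proof replaces your single-projection identification by the weaker, and correct, statement that the projection $\Pi_{y,b}$ onto $\bigcap_{x\in X}{\rm ran}(\Pi^x_{y,b})$, where $\Pi^x_{y,b}=\sum_{a:\,\lambda(x,y,a,b)=1}P_{x,a}$, satisfies the \emph{vector} identity $\Pi_{y,b}\xi=Q_{y,b}\xi$ (equation (\ref{eq_piqsame})); the intersection over \emph{all} $x$ is exactly what compensates for the pair-dependence of the witnessing inputs, and one needs the WOT-closure $\cl M$ of Alice's algebra to know $\Pi_{y,b}\in\cl M$. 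Bob's operators are then replaced by the $\Pi_{y,b}$, living on Alice's side, and several further steps remain that your outline elides: restriction to the common cyclic subspace $\cl K_{\cl M}=\cl K_{\cl N}$; the check that $(\Pi_{y,b})_{b\in B}$ is a PVM; the actual traciality computation for the vector state, done by moving the projections $\Xi_{x,a}$ past Alice's operators via (\ref{eq_xip}); and a faithfulness argument upgrading $\tau(P_{x,a}\Pi_{y,b})=0$ to the operator identity $P_{x,a}\Pi_{y,b}=0$, which is what shows the defining relations of $C^*(\cl G)$ are realised, hence that $C^*(\cl G)\neq\{0\}$ and that $p_{x,a}\mapsto P_{x,a}$, $q_{y,b}\mapsto\Pi_{y,b}$ is a $*$-homomorphism. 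Finally, your picture of $s$ ``descending'' to $C^*(\cl G)$ is misleading: $C^*(\cl G)$ is a quotient of the free product $\cl A(X,A)\ast_1\cl A(Y,B)$, not of $\cl A(X,A)\otimes_{\max}\cl A(Y,B)$ (its generators $p_{x,a}$ and $q_{y,b}$ do not commute), so there is no quotient map along which $s$ could descend; the trace must be built through the concrete representation as above.
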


\begin{proof}
(i)$\Rightarrow$(ii) Fix $p\in C_{\mathrm{qc}}(\lambda)$. 
By Corollary \ref{th_qcgen}, there exists a separable Hilbert spaces $%
\mathcal{H}$, a unit vector $\left\vert \xi \right\rangle \in \mathcal{H}%
\otimes \mathcal{H}$, and PVMs $\left( P_{x,a}\right) _{a\in A}$ and $\left(
Q_{y,b}\right) _{b\in B}$ on $\mathcal{H}$ for $x\in X$ and $y\in Y$ such
that 
$$p\left( a,b|x,y\right) =\left\langle \xi |P_{x,a}Q_{y,b}|\xi\right\rangle, \ \ \ (x,y,a,b) \in X\times Y\times A\times B.$$ 
Let $\mathcal{A}$ (resp.\ $\cl B$) be the C*-algebra generated by $\left\{ P_{x,a}:x\in
X,a\in A\right\} $ (resp.\ $\left\{Q_{y,b} : y\in Y, b\in B\right\}$). 
Let $\mathcal{M}$ (resp.\ $\cl N$) be the WOT-closure of $\mathcal{A}$
(resp.\ $\mathcal{B}$). Observe that $\mathcal{M}\subseteq \mathcal{N}^{\prime }$.
For $x\in X$, $y\in Y$ and $b\in B$, set 
\begin{equation*}
\Pi _{y,b}^{x} = \sum_{a\in A,\lambda \left(x,y,a,b\right) =1}P_{x,a}\text{.}
\end{equation*}
Clearly, $\Pi _{y,b}^{x}$ is a projection in $\cl A$. 
We have
\begin{eqnarray*}
\left\langle \xi | Q_{y,b}|\xi \right\rangle 
& = & 
\sum_{a\in A} \left\langle \xi | P_{x,a} Q_{y,b}|\xi \right\rangle = 
\sum_{a\in A, \lambda(x,y,a,b) = 1} \left\langle \xi | P_{x,a} Q_{y,b}|\xi \right\rangle\\
& = & 
\left\langle \xi | \Pi_{y,b}^x Q_{y,b}|\xi \right\rangle.
\end{eqnarray*}
Since $(I - \Pi_{y,b}^x) Q_{y,b}$ is an idempotent, this shows that 
$$\|(I - \Pi_{y,b}^x) Q_{y,b}|\xi \rangle\|^2 = \left\langle \xi | (I - \Pi_{y,b}^x) Q_{y,b}|\xi \right\rangle = 0,$$
that is,
\begin{equation}\label{eq_piq}
\Pi _{y,b}^{x}Q_{y,b}\left\vert \xi \right\rangle = Q_{y,b}\left\vert \xi \right\rangle, \ \ \  x\in X, y\in Y, b\in B. 
\end{equation}
By assumption, for $b\neq b^{\prime }$ and $y\in Y$ there exists 
$x\in X$ such that $\sum_{a\in A}\lambda \left(x,y,a,b\right) \lambda \left(x,y,a,b'\right) =0$. For such a choice of $x,y,b,b^{\prime }$ we
have 
\begin{equation*}
\left\langle \xi |\Pi _{y,b}^{x}Q_{y,b^{\prime }}|\xi \right\rangle
=\sum_{a\in A,\lambda \left(x,y,a,b\right) =1}\left\langle \xi
|P_{x,a}Q_{y,b^{\prime }}|\xi \right\rangle = 0;
\end{equation*}
henceforth $\Pi _{y,b}^{x}Q_{y,b^{\prime }}\left\vert \xi \right\rangle =0$.

Let $\Pi _{y,b}$ be the projection onto the intersection of the
ranges of $\Pi _{y,b}^{x}$ for $x\in X$. 
Since the projections in $\mathcal{M}$ form a complete sublattice of the
lattice of projections in $\cl B\left( \mathcal{H}\right)$, we have that $\Pi _{y,b}\in \mathcal{M}$. 
By (\ref{eq_piq}),  
\begin{equation}\label{eq_ybqyb}
\Pi _{y,b}Q_{y,b}\left\vert\xi \right\rangle = Q_{y,b}\left\vert \xi \right\rangle. 
\end{equation}
Suppose now that 
$b^{\prime }\in B$ and $b'\neq b$. By the preceding paragraph, there
exists $x\in X$ such that $\Pi _{y,b}^{x}Q_{y,b^{\prime }}\left\vert \xi
\right\rangle = 0$. 
Thus $\Pi _{y,b}Q_{y,b^{\prime}}\left\vert \xi \right\rangle = 0$ whenever $b^{\prime }\neq b$. Thus, using (\ref{eq_ybqyb}), we obtain
\begin{equation}\label{eq_piqsame}
\Pi _{y,b}\left\vert \xi \right\rangle = \Pi _{y,b}\left(\sum_{b^{\prime }\in
B}Q_{y,b^{\prime }}\right)\left\vert \xi \right\rangle =\Pi _{y,b}Q_{y,b}\left\vert
\xi \right\rangle = Q_{y,b}\left\vert \xi \right\rangle \text{.}
\end{equation}

Similarly, define 
\begin{equation*}
\Xi _{x,a}^{y}=\sum_{b\in B,\lambda \left( a,b,x,y\right) =1}Q_{y,b},
\end{equation*}%
let $\Xi _{x,a}\in \mathcal{N}$ be the projection onto the intersection of
the ranges of $\Xi _{x,a}^{y}$ for $y\in Y$, and show that
\begin{equation}\label{eq_xip}
\Xi _{x,a}\left\vert \xi \right\rangle =P_{x,a}\left\vert \xi \right\rangle, \ \ \ x\in X,  a\in A.
\end{equation}

Let $\mathcal{K}_{\mathcal{M}}$ (resp.\ $\mathcal{K}_{\mathcal{N}}$) 
be the closure of the span of 
$\left\{ A\left\vert \xi \right\rangle : A\in \mathcal{M}\right\} $ 
(resp.\ $\left\{B\left\vert \xi \right\rangle :B\in \mathcal{N}\right\}$). 
Suppose that $x_{1},\ldots ,x_{n}\in X$ and $a_{1},\ldots ,a_{n}\in
A$. Then 
\begin{equation*}
P_{x_{1},a_{1}}\cdots P_{x_{n},a_{n}}\left\vert \xi \right\rangle =\Xi
_{x_{n},a_{n}}\cdots \Xi _{x_{1},a_{1}}\left\vert \xi \right\rangle \in 
\mathcal{K}_{\mathcal{N}};
\end{equation*}%
therefore $\mathcal{K}_{\mathcal{M}}\subseteq \mathcal{K}_{\mathcal{N}}$.
Similarly, $\mathcal{K}_{\mathcal{N}}\subseteq \mathcal{K}_{%
\mathcal{M}}$. Set $\mathcal{K}:=\mathcal{K}_{\mathcal{M}}=\mathcal{K}_{%
\mathcal{N}}$. 
Clearly, $\cl K$ is invariant under the projections $P_{x,a}$ and $Q_{y,b}$; 
after replacing $P_{x,a}$ and $Q_{y,b}$ with their restrictions
to $\mathcal{K}$, we can thus assume that $\mathcal{K}=\mathcal{H}$.


Fix $y\in Y$ and $b,b^{\prime }\in B$ with $b\neq b^{\prime }$. By assumption,
there exists $x\in X$ such that $\sum_{a\in A}\lambda \left(x,y,a,b\right)
\lambda \left(x,y,a,b'\right) = 0$. 
We thus have that $\Pi_{y,b}^x \Pi_{y,b'}^x = 0$. Since $\Pi_{y,b}\leq \Pi_{y,b}^x$ and 
$\Pi_{y,b'}\leq \Pi_{y,b'}^x$, we conclude that 
$\Pi _{y,b} \Pi _{y,b^{\prime }} = 0$.
On the other hand, if $Z\in \mathcal{N}$ then, using (\ref{eq_piqsame}), we have 
\begin{equation*}
\sum_{b\in B}\Pi _{y,b}Z\left\vert \xi \right\rangle =
\sum_{b\in B} Z \Pi _{y,b}\left\vert \xi \right\rangle = 
Z\sum_{b\in
B}Q_{y,b}\left\vert \xi \right\rangle =Z\left\vert \xi \right\rangle \text{.}
\end{equation*}%
Since this is true for every $Z\in \mathcal{N}$ we conclude that $\sum_{b\in B}\Pi _{y,b} = I$.

We show that the vector state $\tau : \cl A\to \bb{C}$, given by  
$\tau(A) = \left\langle \xi |A|\xi\right\rangle$, is tracial. 
Indeed, for 
$x_{1},\ldots ,x_{n},x_{1}^{\prime },\ldots ,x_{m}^{\prime }\in X$ and 
$a_{1},\ldots ,a_{n}$, $a_{1}^{\prime },\ldots ,a_{m}^{\prime }\in A$, using (\ref{eq_xip}), we have 
\begin{eqnarray*}
&&\left\langle \xi |\left( P_{x_{1},a_{1}}\cdots P_{x_{n},a_{n}}\right)
(P_{x_{1}^{\prime },a_{1}^{\prime }}\cdots P_{x_{m}^{\prime },a_{m}^{\prime
}})|\xi \right\rangle \\
&=&\left\langle \xi |\Xi _{x_{m}^{\prime },a_{m}^{\prime }}\left(
P_{x_{1},a_{1}}\cdots P_{x_{n},a_{n}}\right) (P_{x_{1}^{\prime },a_{1}^{\prime
}}\cdots P_{x_{m-1}^{\prime },a_{m-1}^{\prime }})|\xi \right\rangle \\
&=&\left\langle \Xi _{x_{m}^{\prime },a_{m}^{\prime }}\xi |\left(
P_{x_{1},a_{1}}\cdots P_{x_{n},a_{n}}\right) (P_{x_{1}^{\prime },a_{1}^{\prime
}}\cdots P_{x_{m-1}^{\prime },a_{m-1}^{\prime }})|\xi \right\rangle \\
&=&\left\langle P _{x_{m}^{\prime },a_{m}^{\prime }}\xi |\left(
P_{x_{1},a_{1}}\cdots P_{x_{n},a_{n}}\right) (P_{x_{1}^{\prime },a_{1}^{\prime
}}\cdots P_{x_{m-1}^{\prime },a_{m-1}^{\prime }})|\xi \right\rangle \\
&=&\left\langle P _{x_{m}^{\prime },a_{m}^{\prime }}\xi |
\Xi_{x_{m-1}^{\prime },a_{m-1}^{\prime }}\left(
P_{x_{1},a_{1}}\cdots P_{x_{n},a_{n}}\right) (P_{x_{1}^{\prime },a_{1}^{\prime
}}\cdots P_{x_{m-2}^{\prime },a_{m-2}^{\prime }})|\xi \right\rangle \\
&=&\left\langle \xi |\Xi _{x_{m-1}^{\prime },a_{m-1}^{\prime
}}P_{x_{m}^{\prime },a_{m}^{\prime }}\left( P_{x_{1},a_{1}}\cdots
P_{x_{n},a_{n}}\right) (P_{x_{1}^{\prime },a_{1}^{\prime }}\cdots
P_{x_{m-2}^{\prime },a_{m-2}^{\prime }}|\xi \right\rangle \\
&=&\cdots =\left\langle \xi |(P_{x_{1}^{\prime },a_{1}^{\prime }}\cdots
P_{x_{m}^{\prime },a_{m}^{\prime }})\left( P_{x_{1},a_{1}}\cdots
P_{x_{n},a_{n}}\right) |\xi \right\rangle \text{.}
\end{eqnarray*}%
This shows that $\tau $ is a tracial state on $\mathcal{A}$. A similar
argument shows that $\tau $ is a tracial state on $\mathcal{B}$.

We next show that $\tau $ is faithful on $\mathcal{A}$. Suppose that $C\in 
\mathcal{A}$ is positive and assume that $\tau \left( C\right) =0$. 
Then, for every $T\in \mathcal{A}$, we have that
\begin{eqnarray*}
\left\vert \left\langle \xi |T^{\ast }C^{2}T|\xi \right\rangle \right\vert
&=&\tau \left( T^{\ast }C^{2}T\right) =\tau \left( CTT^*C\right) \\
&\leq &\left\Vert T\right\Vert ^{2}\tau (C^{\frac{1}{2}}CC^{\frac{1}{2}%
})\leq \left\Vert T\right\Vert ^{2}\left\Vert C\right\Vert \tau \left(
C\right) =0\text{.}
\end{eqnarray*}%
Therefore $CT\left\vert \xi \right\rangle =0$ for every $T\in \mathcal{A}$, and
hence $C=0$. A similar argument shows that $\tau $ is faithful on $\mathcal{B%
}$.

Suppose now that $\lambda \left(x,y,a,b\right) =0$. Then
\begin{equation*}
0=\left\langle \xi |P_{x,a}Q_{y,b}|\xi \right\rangle =\left\langle \xi
|P_{x,a}\Pi _{y,b}|\xi \right\rangle =\tau \left( P_{x,a}\Pi _{y,b}\right);
\end{equation*}%
therefore, $P_{x,a}\Pi _{y,b} = 0$.

This shows that the C*-algebra of the game $C^{\ast }\left( \mathcal{G}%
\right) $ is nonzero, and the assignment $p_{x,a}\mapsto P_{x,a}$, $%
q_{y,b}\mapsto \Pi_{y,b}$ defines a unital *-homomorphism $\pi :C^{\ast }\left( 
\mathcal{G}\right) \rightarrow \mathcal{A}$. Since $\tau $ is a
tracial state on $\mathcal{A}$, we conclude that $\tau \circ \pi $ is a
tracial state on $C^{\ast }\left( \mathcal{G}\right) $. For $\left(x,y,a,b\right) \in X\times Y\times A\times B$, we have
\begin{equation*}
\left( \tau \circ \pi \right) \left( p_{x,a}q_{y,b}\right) =\tau \left(
P_{x,a}\Pi _{y,b}\right) =\left\langle \xi |P_{x,a}Q_{y,b}|\xi \right\rangle
=p\left(a,b|x,y\right) \text{.}
\end{equation*}%
This concludes the proof of the implication.

(ii)$\Rightarrow$(i) Suppose that there exists a tracial state $\tau $ on $%
C^{\ast }\left( \mathcal{G}\right) $ such that 
$$\tau \left(p_{x,a}q_{y,b}\right) = p\left(a,b|x,y\right), \ \ \ \left(x,y,a,b\right) \in X\times Y\times A\times B.$$ 
Consider the left regular representation $\pi
_{\tau }$ of $C^{\ast }\left( \mathcal{G}\right)$ 
associated with $\tau$,
and the right regular representation $\pi _{\tau }^{\mathrm{op}}$ of $%
C^{\ast }\left( \mathcal{G}\right) ^{\mathrm{op}}$ associated with $\tau$;
recall that they act on the Hilbert space 
$\mathcal{H}=L^{2}\left( C^{\ast }\left(\mathcal{G}\right) ,\tau \right)$ produced via $\tau$ through the 
GNS construction and 
are defined by setting $\pi _{\tau }\left( z\right) \left\vert
w\right\rangle =\left\vert zw\right\rangle $ and $\pi _{\tau }^{\mathrm{op}%
}\left( z\right) \left\vert w\right\rangle =\left\vert wz\right\rangle $ for 
$z,w\in C^{\ast }\left( \mathcal{G}\right) $ (see \cite{brown_c*-algebras_2008}). 
Set $P_{x,a}=\pi \left( p_{x,a}\right)$ and 
$Q_{y,b}=\rho \left( q_{y,b}\right) $, $x\in X$, $y\in Y$, $a\in A$, $b\in B$.
Observe that 
$\left( P_{x,a}\right) _{a\in A}$ and $\left( Q_{y,b}\right) _{b\in B}$ are
PVMs and
\begin{equation*}
\left\langle 1|P_{x,a}Q_{y,b}|1\right\rangle _{L^{2}\left( C^{\ast }\left( 
\mathcal{G}\right) ,\tau \right) }=\left\langle p_{x,a}|q_{y,b}\right\rangle
=\tau \left( p_{x,a}q_{y,b}\right)
\end{equation*}%
for any $\left(x,y,a,b\right) \in X\times Y\times A\times B$.
\end{proof}

\begin{corollary}
\label{Coro:qc}Suppose that $\mathcal{G}$ is an imitation game. The
following assertions are equivalent:

\begin{enumerate}
\item $\mathcal{G}$ has a perfect quantum commuting strategy;

\item the C*-algebra of the game $C^{\ast }\left( \mathcal{G}\right) $ is
nonzero, and it has a tracial state.
\end{enumerate}
\end{corollary}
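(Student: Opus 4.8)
The plan is to derive Corollary~\ref{Coro:qc} directly from Theorem~\ref{Thm:qc} by observing that existence of a perfect quantum commuting strategy is precisely the statement that the set $\cl C_{\qc}(\cl G)$ is nonempty. First I would prove the implication $(1)\Rightarrow(2)$: if $\cl G$ has a perfect quantum commuting strategy, then there exists a non-signalling correlation $p \in \cl C_{\qc}(\cl G)$, and applying Theorem~\ref{Thm:qc}~(b) to this particular $p$ immediately yields that $C^{\ast}(\cl G)$ is nonzero and carries a tracial state $\tau$ with $p(a,b|x,y) = \tau(p_{x,a} q_{y,b})$. In particular $C^{\ast}(\cl G)$ is nonzero and possesses a tracial state, which is exactly assertion~(2).

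For the converse $(2)\Rightarrow(1)$, I would start from a tracial state $\tau$ on the nonzero C*-algebra $C^{\ast}(\cl G)$ and simply define a candidate correlation by the formula $p(a,b|x,y) := \tau(p_{x,a} q_{y,b})$. The key point is that this $p$ satisfies hypothesis~(b) of Theorem~\ref{Thm:qc} by construction, so once I verify that $p$ is a genuine non-signalling correlation lying in $\cl C_{\qc}(\cl G)$, Theorem~\ref{Thm:qc} delivers $p \in \cl C_{\qc}(\cl G)$ and hence a perfect quantum commuting strategy, giving~(1). Concretely I would check that $p$ is a non-signalling correlation: positivity of each $p(a,b|x,y)$ follows since $p_{x,a}, q_{y,b}$ are commuting projections (as relations~(a),(b),(c) of Definition~\ref{Definition:algebra} hold in $C^{\ast}(\cl G)$, and commutativity $p_{x,a}q_{y,b}=q_{y,b}p_{x,a}$ is forced by the orthogonality relations, so $p_{x,a}q_{y,b}$ is a positive element and $\tau$ is a state); normalisation $\sum_{a,b} p(a,b|x,y) = 1$ follows from relations~(a),(b) together with $\tau(1)=1$; the non-signalling conditions~(\ref{eq_yy'}) and~(\ref{eq_xx'}) follow again from $\sum_b q_{y,b} = 1$ and $\sum_a p_{x,a} = 1$, which make the relevant marginals independent of the other party's input. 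Finally, the implication $\lambda(x,y,a,b)=0 \Rightarrow p_{x,a}q_{y,b}=0 \Rightarrow \tau(p_{x,a}q_{y,b})=0$ shows $p \in \cl C(\cl G)$.

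I would then close the argument by noting that the commuting projections $p_{x,a}$ and $q_{y,b}$ living in the single C*-algebra $C^{\ast}(\cl G)$, together with the GNS vector of $\tau$, realise $p$ as an element of $\cl C_{\qc}$, so that in fact the non-signalling correlation $p$ constructed above automatically qualifies as input to Theorem~\ref{Thm:qc}; invoking the implication (b)$\Rightarrow$(a) of that theorem then places $p$ in $\cl C_{\qc}(\cl G)$, which is assertion~(1).

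The content of this corollary is genuinely light: it is a routine specialisation of Theorem~\ref{Thm:qc} obtained by quantifying over $p$. The only place requiring a small amount of care is the verification that the formula $\tau(p_{x,a}q_{y,b})$ defines an honest non-signalling correlation in the converse direction, and in particular checking that the generators of $C^{\ast}(\cl G)$ actually commute across the two families so that the products $p_{x,a}q_{y,b}$ are positive; this is the one step where the imitation-game structure (via the orthogonality relations of Definition~\ref{Definition:algebra}) is used, and I expect it to be the main, though still minor, obstacle.
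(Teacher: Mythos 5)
Your overall plan --- quantifying Theorem~\ref{Thm:qc} over $p$, with (1)$\Rightarrow$(2) immediate and (2)$\Rightarrow$(1) obtained by defining $p(a,b|x,y):=\tau(p_{x,a}q_{y,b})$, checking that this is a non-signalling correlation, and invoking the converse implication of Theorem~\ref{Thm:qc} --- is exactly the intended derivation. However, your verification rests on a false claim: the generators $p_{x,a}$ and $q_{y,b}$ of $C^{*}(\mathcal{G})$ do \emph{not} commute in general, and the orthogonality relations of Definition~\ref{Definition:algebra} do not force them to. Relation (c) only concerns pairs with $\lambda(x,y,a,b)=0$, where both products vanish; for allowed pairs there is no relation whatsoever. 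A concrete counterexample: for the synchronicity game of Example~\ref{Example:synchronicity} with $|X|=|A|=2$, the relations force $q_{x,a}=p_{x,a}$ (since $p_{x,a}=p_{x,a}\sum_{b}q_{x,b}=p_{x,a}q_{x,a}$ and $q_{x,a}=\sum_{a'}p_{x,a'}q_{x,a}=p_{x,a}q_{x,a}$), and $C^{*}(\mathcal{G})$ is then the unital free product $\mathbb{C}^{2}\ast_{1}\mathbb{C}^{2}\cong C^{*}(\mathbb{Z}_{2}\ast\mathbb{Z}_{2})$, which is noncommutative: $p_{1,a}$ and $q_{2,b}=p_{2,b}$ do not commute. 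Consequently the two steps where you use commutativity break down: $p_{x,a}q_{y,b}$ is in general not even self-adjoint, so you cannot deduce positivity of $\tau(p_{x,a}q_{y,b})$ from positivity of the element; and the families $(p_{x,a})$, $(q_{y,b})$ inside a GNS representation of $C^{*}(\mathcal{G})$ do not directly exhibit $p$ as a quantum commuting correlation.

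Both defects are repaired by the trace property, which is precisely why assertion (2) demands a \emph{tracial} state rather than an arbitrary state. Positivity: $\tau(p_{x,a}q_{y,b})=\tau(q_{y,b}p_{x,a}q_{y,b})=\tau\bigl((p_{x,a}q_{y,b})^{*}(p_{x,a}q_{y,b})\bigr)\geq 0$; for a non-tracial state this number can fail to be real, let alone non-negative. Your normalisation and non-signalling computations use only $\sum_{a}p_{x,a}=\sum_{b}q_{y,b}=1$, so they stand as written. For membership in $\mathcal{C}_{\rm qc}$, do not argue inside $C^{*}(\mathcal{G})$ itself; instead rely on the proof of the implication (b)$\Rightarrow$(a) (i.e.\ (ii)$\Rightarrow$(i)) of Theorem~\ref{Thm:qc}, where commutativity is manufactured by representing the $p_{x,a}$ by left multiplication and the $q_{y,b}$ by right multiplication on $L^{2}(C^{*}(\mathcal{G}),\tau)$: left and right multiplications always commute, and right multiplication is $L^{2}$-bounded precisely because $\tau$ is tracial. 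With these corrections your argument is complete and coincides with the paper's (implicit) proof of the corollary.
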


In view of Proposition \ref{Prop:solution}, Corollary \ref{Coro:qc} recovers 
\cite[Theorem 4]{cleve_perfect_2016} as a particular case.

\medskip

We now turn our attention to other quantum strategies. 
Given an imitation game $\mathcal{G}$, in the next theorem, we characterise the
elements of the set $C_{\mathrm{q}}\left( \mathcal{G}\right) $ in terms of
traces on the game C*-algebra in a fashion analogous to Theorem \ref{Thm:qc}.

\begin{theorem}
\label{Thm:q}
Let $\mathcal{G} = (X,Y,A,B,\lambda)$ be an imitation game and $p$ be a non-signalling 
correlation on $(X,Y,A,B)$. 
The following statements are equivalent:

\begin{itemize}
\item[(i)] $p\in C_{\mathrm{qs}}\left( \mathcal{G}\right) $;

\item[(ii)] $p\in C_{\mathrm{q}}\left( \mathcal{G}\right) $;

\item[(iii)] $p\in C_{\mathrm{qm}}\left( \mathcal{G}\right) $;

\item[(iv)] the game C*-algebra $C^{\ast }\left( \mathcal{G}\right) $ is nonzero,
and there exists a finite-dimensional C*-algebra $\cl F$ with a tracial state $%
\tau $ and a unital *-homomorphism $\pi :C^{\ast }\left( \mathcal{G}\right)
\rightarrow \cl F$ such that 
$$p\left( a,b|x,y\right) =\left( \tau \circ \pi\right) \left( p_{xa}q_{yb}\right), \ \ \ (x,y,a,b)\in X\times Y\times A\times B.$$
\end{itemize}
\end{theorem}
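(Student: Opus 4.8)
The plan is to establish the cycle (iii)$\Rightarrow$(ii)$\Rightarrow$(i)$\Rightarrow$(iv)$\Rightarrow$(iii). The implications (iii)$\Rightarrow$(ii)$\Rightarrow$(i) are immediate from the inclusions $\cl C_{\rm qm}\subseteq\cl C_{\qq}\subseteq\cl C_{\qs}$ recorded in Section \ref{s_prel}, so the real content lies in (i)$\Rightarrow$(iv) and (iv)$\Rightarrow$(iii).

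For (iv)$\Rightarrow$(iii) I would argue block by block. Writing $\cl F=\bigoplus_k M_{n_k}$ and $\tau=\sum_k t_k\,{\rm tr}_{n_k}$ with $t_k\geq 0$, $\sum_k t_k=1$, set $P_{x,a}=\pi(p_{x,a})$ and $Q_{y,b}=\pi(q_{y,b})$; relations (a)--(c) force their $k$-th components to be PVMs whose products vanish on $N(\lambda)$. On the GNS space $L^2(M_{n_k},{\rm tr}_{n_k})\cong\bb C^{n_k}\otimes\bb C^{n_k}$ the identity vector is the maximally entangled vector $|\eta_k\rangle$, left multiplication acts as $(\cdot)\otimes I$ and right multiplication as $I\otimes(\cdot)^{T}$, whence ${\rm tr}_{n_k}\!\big(P^{(k)}_{x,a}Q^{(k)}_{y,b}\big)=\langle\eta_k|P^{(k)}_{x,a}\otimes (Q^{(k)}_{y,b})^{T}|\eta_k\rangle$. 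Each block thus produces a maximally entangled perfect strategy, and $p=\sum_k t_k(\cdots)$ displays $p$ as a convex combination of these, i.e.\ $p\in\cl C_{\rm qm}(\lambda)$.

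The main step is (i)$\Rightarrow$(iv). Since $\cl C_{\qs}\subseteq\cl C_{\qc}$, I would first run the construction of Theorem \ref{Thm:qc} on the spatial realisation $|\xi\rangle\in\cl H\otimes\cl H$, $P_{x,a}\otimes I$, $I\otimes Q_{y,b}$: this yields projections $\Pi_{y,b}$ in the von Neumann algebra $\cl M$ generated by $\{P_{x,a}\}$, a $*$-homomorphism $\pi:C^*(\cl G)\to\cl M$, and a faithful tracial state $\tau(T)=\langle\xi|T\otimes I|\xi\rangle$ with $p(a,b|x,y)=\tau(P_{x,a}\Pi_{y,b})$; after the reduction in that proof we may assume $|\xi\rangle$ cyclic, so $\tau$ is faithful. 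The new ingredient exploits spatiality: $\tau(T)={\rm Tr}(\rho\,T)$ for the reduced density $\rho={\rm Tr}_{\cl H}|\xi\rangle\langle\xi|$, and relation (\ref{eq_xip}) gives $(P_{x,a}\otimes I)|\xi\rangle=(I\otimes\Xi_{x,a})|\xi\rangle$, hence $(T\otimes I)|\xi\rangle=(I\otimes\phi(T))|\xi\rangle$ for a multiplicative-up-to-order map $\phi$ on $\cl M$. Applying this to a unitary $U\in\cl M$ and comparing the $\cl H$-reduced densities of the two (equal) sides forces $U\rho U^*=\rho$; thus $\rho\in\cl M'$.

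The crux, and the step I expect to be the main obstacle, is turning $\rho\in\cl M'$ into genuine finite-dimensionality and then into a \emph{finite} algebra. Here I would use that the spectral projection $E_\mu$ of $\rho$ for an eigenvalue $\mu>0$ has finite operator rank, since ${\rm rank}(E_\mu)={\rm Tr}(\rho E_\mu)/\mu\leq 1/\mu$; as $E_\mu\in\cl M'$, the eigenspaces $\cl H_\mu=E_\mu\cl H$ are finite-dimensional and $\cl M$-invariant, and faithfulness gives $\bigoplus_\mu\cl H_\mu=\cl H$. Consequently $\tau=\sum_\mu\mu\,{\rm Tr}_{\cl H_\mu}$ and $p=\sum_\mu t_\mu q_\mu$ with $t_\mu={\rm Tr}(\rho E_\mu)$ (so $\sum_\mu t_\mu=1$) and $q_\mu$ the maximally entangled correlation from the finite-dimensional representation $\cl M\to\cl B(\cl H_\mu)$. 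This convex combination may be infinite, but since correlations live in $\bb R^{|X||Y||A||B|}$, Carathéodory's theorem lets me rewrite it using at most $|X||Y||A||B|+1$ of the $q_\mu$; assembling those blocks into a single finite-dimensional $\cl F=\bigoplus_j\cl B(\cl H_{\mu_j})$ with the corresponding convex-combination trace and the direct sum of representations delivers exactly the data demanded in (iv).
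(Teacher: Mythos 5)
Your proposal is, in substance, the paper's own proof. The implications (iii)$\Rightarrow$(ii)$\Rightarrow$(i) are dismissed identically; your GNS-with-transpose computation for (iv)$\Rightarrow$(iii) is exactly what the paper means when it says to ``proceed as in the proof of (ii)$\Rightarrow$(i) of Theorem \ref{Thm:qc}''; and for (i)$\Rightarrow$(iv) the paper, following Cleve--Mittal, takes the Schmidt decomposition of $|\xi\rangle$ and groups the Schmidt vectors with equal coefficients: the subspaces $\cl K_A^{\alpha}$ there are precisely your eigenspaces $\cl H_{\mu}$ of the reduced density operator $\rho$ (with $\mu=|\alpha|^2$), their invariance under the $P_{x,a}$ and $\Pi_{y,b}$ is exactly your assertion $\rho\in\cl M'$, and both arguments close with the same Cook--Webster refinement of Carath\'eodory's theorem (the paper's reference \cite{cw}). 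So the two proofs differ only in language, not in route.

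There is, however, one step that fails as written: ``after the reduction in that proof we may assume $|\xi\rangle$ cyclic, so $\tau$ is faithful.'' The reduction in Theorem \ref{Thm:qc} passes to $\cl K=\overline{\cl M|\xi\rangle}$, and this subspace of $\cl H_A\otimes\cl H_B$ is in general not of the form $\cl K_A\otimes\cl K_B$ (already for $\cl M=\bb C I$ and $|\xi\rangle$ entangled it is $\bb C|\xi\rangle$); after that reduction the partial trace $\rho={\rm Tr}_{\cl H_B}|\xi\rangle\langle\xi|$, on which your whole argument rests, no longer makes sense. If instead you skip the reduction, then $\tau$ need not be faithful ($\rho$ may have a kernel), and, more seriously, the two facts you import from Theorem \ref{Thm:qc}, namely $\sum_{b}\Pi_{y,b}=I$ and $P_{x,a}\Pi_{y,b}=0$ on $N(\lambda)$ --- both needed for each block map $\pi_{\mu}$ to be a unital *-homomorphism of $C^*(\cl G)$ --- are proved there only \emph{after} the reduction, using faithfulness. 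The repair is short but must be made: since $\rho\in\cl M'$, its support projection $s$ lies in $\cl M'$; summing the intertwining relation $(\Pi_{y,b}\otimes I)|\xi\rangle=(I\otimes Q_{y,b})|\xi\rangle$ over $b$ gives $\bigl(\sum_b\Pi_{y,b}\otimes I\bigr)|\xi\rangle=|\xi\rangle$, whence $\sum_b\Pi_{y,b}\geq s$; and ${\rm Tr}(\rho\, P_{x,a}\Pi_{y,b})=p(a,b|x,y)=0$ on $N(\lambda)$, combined with $\rho\in\cl M'$, forces $P_{x,a}\Pi_{y,b}\,\rho=0$ (a trace of a product of the two positive operators $\rho^{1/4}P_{x,a}\rho^{1/4}$ and $\rho^{1/4}\Pi_{y,b}\rho^{1/4}$ vanishes only if the product does), i.e.\ $P_{x,a}\Pi_{y,b}\,s=0$. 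Both identities then hold on every eigenspace $\cl H_{\mu}\subseteq s\cl H_A$, which is all your block construction uses, and faithfulness is never needed. A smaller point of the same kind: $\rho\in\cl M'$ should be derived by applying your reduced-density comparison to the concrete unitaries $\sum_a\nu_aP_{x,a}$ (unimodular $\nu_a$), whose Bob-side partners $\sum_a\nu_a\Xi_{x,a}+\bigl(I-\sum_a\Xi_{x,a}\bigr)$ are honestly unitary; your map $\phi$ is not defined on arbitrary unitaries of $\cl M$. With these corrections your argument is complete and coincides with the paper's.
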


\begin{proof}
(i)$\Rightarrow $(iv) Suppose that $p\in \cl C_{\mathrm{qs}}\left( \mathcal{G}%
\right) $. There exist separable Hilbert spaces $%
\mathcal{H}_{A}$ and $\mathcal{H}_{B}$, a unit vector $\left\vert \xi
\right\rangle \in \mathcal{H}_{A}\otimes \mathcal{H}_{B}$, and PVMs $\left(
P_{x,a}\right) _{a\in A}$ (resp.\ $\left( Q_{y,b}\right) _{b\in B}$) on $\mathcal{H}_A$ 
(resp.\ $\cl H_B$) for $x\in X$ (resp.\ $y\in Y$), such that 
$$p\left( a,b|x,y\right) = \left\langle \xi |P_{x,a}\otimes Q_{y,b}|\xi\right\rangle, \ \ \ \left(x,y,a,b\right) \in X\times Y\times A\times B.$$ 
Following the proof of Theorem \ref{Thm:qc}, for $y\in Y, b\in B$ and $x\in X$, set
\begin{equation*}
\Pi _{y,b}^{x} = \sum_{a\in A,\lambda \left(x,y,a,b\right) =1} P_{x,a}\text{.}
\end{equation*}
Similarly,
for $x\in X,a\in A$ and $y\in Y$, set
\begin{equation*}
\Xi _{x,a}^{y} = \sum_{b\in B,\lambda \left(x,y,a,b\right) =1} Q_{y,b}.
\end{equation*}%
Clearly, $(\Pi _{y,b}^{x})_{b\in B}$ (resp.\ $(\Xi _{x,a}^{y})_{a\in A}$) 
is a family of projections in $\cl B\left( \mathcal{H}_{A}\right)$ 
(resp.\ $\cl B\left( \mathcal{H}_{B}\right)$). 
Let $\Pi_{y,b}$ (resp.\ $\Xi _{x,a}$) be the projection onto the intersection of the ranges of $\Pi
_{y,b}^{x}$ (resp.\ $\Xi _{x,a}^{y}$) for $x\in X$ (resp.\ $y\in Y$).

It follows from the proof of the implication (i)$\Rightarrow $(ii) in Theorem \ref{Thm:qc} 
that $(\Pi _{y,b})_{b\in B}$ (resp.\ $(\Xi _{x,a})_{a\in A}$) is a PVM on $\cl H_A$ (resp.\ $\cl H_B$),
\begin{equation}\label{eq_piten1}
\left( \Pi _{y,b}\otimes I\right) \left\vert \xi \right\rangle =\left(
I\otimes Q_{y,b}\right) \left\vert \xi \right\rangle
\end{equation}
and
\begin{equation}\label{eq_piten2}
\left( I\otimes \Xi _{x,a}\right) \left\vert \xi \right\rangle =\left(
P_{x,a}\otimes I\right) \left\vert \xi \right\rangle \text{.}
\end{equation}
We now follow the arguments in the proof of \cite[Theorem 1]%
{cleve_characterization_2014}. Consider the Schmidt decomposition%
\begin{equation*}
\left\vert \xi \right\rangle =\sum_{i\in I}\alpha _{i}\left( \left\vert \phi
_{i}\right\rangle \otimes \left\vert \psi _{i}\right\rangle \right)
\end{equation*}%
for the unit vector $\left\vert \xi \right\rangle $. By (\ref{eq_piten1}) and (\ref{eq_piten2}),
\begin{equation}\label{eq_phipsi1}
\sum_{i\in I}\alpha _{i}\left(\Pi _{y,b}\left\vert \phi
_{i}\right\rangle \otimes \left\vert \psi _{i}\right\rangle \right)
=\sum_{i\in I}\alpha _{i}\left( \left\vert \phi _{i}\right\rangle \otimes
Q_{y,b}\left\vert \psi _{i}\right\rangle \right)
\end{equation}
and
\begin{equation}\label{eq_phipsi2}
\sum_{i\in I}\alpha _{i}\left( \left\vert \phi _{i}\right\rangle \otimes \Xi
_{x,a}\left\vert \psi _{i}\right\rangle \right) =\sum_{i\in I}\alpha
_{i}\left( P_{x,a}\left\vert \phi _{i}\right\rangle \otimes \left\vert \psi
_{i}\right\rangle \right) \text{.}
\end{equation}
Let $(\mu_b)_{b\in B}$ be unimodular scalars such that the operators
$$U_{y} = \sum_{b\in B} \mu_b \Pi _{y,b} \ \mbox{ and } \ S_y := \sum_{b\in B} \mu_b Q _{y,b}$$
are unitary. Similarly, let 
$(\nu_a)_{a\in A}$ be unimodular scalars such that the operators
$$V_{x} := \sum_{a\in A} \nu_a \Xi _{x,a} \ \mbox{ and } \ T_x := \sum_{a\in A} \nu_a P _{x,a}$$
are unitary.
Equations (\ref{eq_phipsi1}) and (\ref{eq_phipsi2}) imply 
\begin{equation}\label{eq_phipsi3}
\sum_{i\in I}\alpha _{i}\left(U _{y}\left\vert \phi
_{i}\right\rangle \otimes \left\vert \psi _{i}\right\rangle \right)
=\sum_{i\in I}\alpha _{i}\left( \left\vert \phi _{i}\right\rangle \otimes
S_{y}\left\vert \psi _{i}\right\rangle \right)
\end{equation}
and
\begin{equation}\label{eq_phipsi4}
\sum_{i\in I}\alpha _{i}\left( \left\vert \phi _{i}\right\rangle \otimes V_{x}\left\vert \psi _{i}\right\rangle \right) 
=\sum_{i\in I}\alpha_{i}\left(T_{x}\left\vert \phi _{i}\right\rangle \otimes \left\vert \psi
_{i}\right\rangle \right) \text{.}
\end{equation}

Fix a Schmidt coefficient $\alpha $ for $\left\vert \xi \right\rangle $ and
set $I^{\alpha }=\left\{ i\in I:\alpha _{i}=\alpha \right\} $.
Define $\mathcal{K}_{A}^{\alpha }:=\mathrm{span}\left\{ \left\vert \phi
_{i}\right\rangle :i\in I^{\alpha }\right\} $ and $\mathcal{K}_{B}^{\alpha
}:=\mathrm{span}\left\{ \left\vert \psi _{i}\right\rangle :i\in I^{\alpha
}\right\} $. 
By the uniqueness of the Schmidt decomposition, 
(\ref{eq_phipsi1}) and (\ref{eq_phipsi2}), 
we deduce that
\begin{equation*}
\mathrm{span}\left\{U _{y}\left\vert \phi _{i}\right\rangle :i\in
I^{\alpha }\right\} =\mathrm{span}\left\{T_{x}\left\vert \phi
_{i}\right\rangle :i\in I^{\alpha }\right\} = \mathcal{K}_{A}^{\alpha }
\end{equation*}%
and
\begin{equation*}
\mathrm{span}\left\{U_{x}\left\vert \psi _{i}\right\rangle :i\in
I^{\alpha }\right\} =\mathrm{span}\left\{S_{y}\left\vert \psi
_{i}\right\rangle :i\in I^{\alpha }\right\} =\mathcal{K}_{B}^{\alpha }\text{.%
}
\end{equation*}%
Therefore, $\mathcal{K}_{A}^{\alpha }$ is $\Pi _{y,b}$-invariant and $%
P_{x,a}$-invariant for every $x\in X,y\in Y,b\in B,a\in A$. Similarly, $%
\mathcal{K}_{B}^{\alpha }$ is $\Xi _{x,a}$-invariant and $Q_{y,b}$%
-invariant for every $x\in X,y\in Y,b\in B,a\in A$. 

Let $P_{x,a}^{\alpha }$
and $\Pi _{y,b}^{\alpha }$ be the restriction of $P_{x,a}$ and $\Pi
_{y,b}$, respectively, to $\mathcal{K}_{A}^{\alpha }$, which we views 
as operators on $\mathcal{K}_{A}^{\alpha }$. Define  
$Q_{y,b}^{\alpha }, \Xi _{x,a}^{\alpha }\in \cl B\left( \mathcal{K}_{B}^{\alpha
}\right) $ similarly. 
Let also 
$$\left\vert \xi ^{\alpha }\right\rangle = \frac{1}{\sqrt{\left\vert I^{\alpha }\right\vert 
}}\sum_{i\in I^{\alpha }}\left\vert \phi _{i}\right\rangle \otimes
\left\vert \psi _{i}\right\rangle$$ 
be the maximally entangled vector in $\mathcal{K}_{A}^{\alpha }\otimes 
\mathcal{K}_{B}^{\alpha }$. We have that
\begin{equation*}
(\Pi _{y,b}^{\alpha }\otimes I)\left\vert \xi ^{\alpha }\right\rangle
=(I\otimes Q_{y,b}^{\alpha })\left\vert \xi \right\rangle, \ \ \ y\in Y, b\in B.
\end{equation*}
Clearly, $\sum_{b\in B}\Pi _{y,b}^{\alpha }$ is the identity of $\mathcal{K}%
_{A}^{\alpha }$. If $\lambda
\left(x,y,a,b\right) =0$ then
\begin{equation*}
0=\left( P_{x,a}^{\alpha }\otimes Q_{y,b}^{\alpha }\right) \left\vert \xi
^{\alpha }\right\rangle =\left( P_{x,a}^{\alpha }\Pi _{y,b}^{\alpha }\otimes
I\right) \left\vert \xi ^{\alpha }\right\rangle =\left( P_{x,a}^{\alpha }\Pi
_{y,b}^{\alpha }\otimes I\right) \left\vert \xi ^{\alpha }\right\rangle
\end{equation*}%
and hence $P_{x,a}^{\alpha }\Pi _{y,b}^{\alpha }=0$ by \cite[Lemma 2]%
{cleve_characterization_2014}. This shows that the
assignment $p_{x,a}\mapsto P_{x,a}^{\alpha }$ and $q_{y,b}\mapsto \Pi
_{y,b}^{\alpha }$ defines an unital *-homomorphism $\pi ^{\alpha }:C^{\ast
}\left( \mathcal{G}\right) \rightarrow \cl B\left( \mathcal{K}_{A}^{\alpha
}\right) $. Define $\tau ^{\alpha }$ to be the canonical tracial state of $%
\cl B\left( \mathcal{K}_{A}^{\alpha }\right) $, and observe that $\tau ^{\alpha
}\left( T\right) =\left\langle \xi ^{\alpha }|T\otimes I|\xi ^{\alpha
}\right\rangle $ for $T\in \cl B(\mathcal{K}_{A}^{\alpha})$.

Let $\left( \alpha _{n}\right) $ be an enumeration of the Schmidt
coefficients of $\left\vert \xi \right\rangle $. We have%
\begin{eqnarray*}
p\left(a,b|x,y\right) &=&\left\langle \xi |P_{x,a}\otimes Q_{y,b}|\xi
\right\rangle \\
&=&\sum_{i,j}\alpha _{i}\overline{\alpha _{j}}\left( \left\langle \phi
_{j}\right\vert \otimes \left\langle \psi _{j}\right\vert \right) \left(
P_{x,a}\otimes Q_{y,b}\right) \left( \left\vert \phi _{i}\right\rangle \otimes
\left\vert \psi _{i}\right\rangle \right) \\
&=&\sum_{n}\left\vert \alpha _{n}\right\vert ^{2}\left\langle \xi ^{\alpha
_{n}}|P_{x,a}^{\alpha _{n}}\otimes Q_{y,b}^{\alpha _{n}}|\xi ^{\alpha
_{n}}\right\rangle \\
&=&\sum_{n}\left\vert \alpha _{n}\right\vert ^{2}\left\langle \xi ^{\alpha
_{n}}|P_{x,a}^{\alpha _{n}}\Pi _{y,b}^{\alpha _{n}}\otimes I|\xi ^{\alpha
_{n}}\right\rangle \\
&=&\sum_{n}\left\vert \alpha _{n}\right\vert ^{2}\left( \tau ^{\alpha
_{n}}\circ \pi ^{\alpha _{n}}\right) \left( p_{x,a}q_{y,b}\right) \text{.}
\end{eqnarray*}%
By \cite{cw}, we can replace the
infinite convex combination above with a finite convex combination. This
concludes the proof.

(iv)$\Rightarrow $(iii) Suppose that $\cl F$ is a finite-dimensional C*-algebra, $%
\tau $ is a tracial state on $\cl F$, and $\pi :C^{\ast }\left( \mathcal{G}%
\right) \rightarrow F$ is a unital *-homomorphisms such that $p\left(
a,b|x,y\right) =\left( \tau \circ \pi \right) \left( p_{xa}q_{yb}\right) $
for $\left( a,b,x,y\right) \in A\times B\times X\times Y$. After observing
that $\tau $ is a convex combination of canonical tracial states on matrix
algebras, one can proceed as in the proof of (ii)$\Rightarrow $(i) in Theorem %
\ref{Thm:qc} to show that $p\in C_{\mathrm{qm}}$.

(iii)$\Rightarrow $(ii)$\Rightarrow $(i) hold trivially.
\end{proof}

\begin{corollary}
\label{Coro:q}
Let $\mathcal{G}$ be an imitation game. The following
assertions are equivalent:

\begin{itemize}
\item[(i)] $\mathcal{G}$ has a perfect spatial quantum strategy;

\item[(ii)] $\mathcal{G}$ has a perfect quantum strategy;

\item[(iii)] $\mathcal{G}$ has a perfect quantum strategy using a maximally
entangled vector;

\item[(iv)] the C*-algebra of the game $C^{\ast }\left( \mathcal{G}\right) $ is
nonzero, and it has a nondegenerate finite-dimensional representation.
\end{itemize}
\end{corollary}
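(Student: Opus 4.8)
The plan is to deduce the entire corollary from Theorem~\ref{Thm:q}, which describes the individual correlations lying in each of the sets $\cl C_{\qs}(\cl G)$, $\cl C_{\qq}(\cl G)$ and $\cl C_{\rm qm}(\cl G)$; the corollary is simply its ``existential'' shadow, obtained by passing from single correlations to the non-emptiness of these sets.

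First I would dispose of the equivalence of (i), (ii) and (iii). By the equivalence (i)$\Leftrightarrow$(ii)$\Leftrightarrow$(iii) of Theorem~\ref{Thm:q}, applied to an arbitrary non-signalling correlation $p$, the three sets $\cl C_{\qs}(\cl G)$, $\cl C_{\qq}(\cl G)$ and $\cl C_{\rm qm}(\cl G)$ coincide. Since $\cl G$ possesses a perfect spatial quantum (resp.\ quantum, maximally entangled quantum) strategy exactly when the corresponding set is non-empty, assertions (i)--(iii) are equivalent.

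Next I would prove (ii)$\Leftrightarrow$(iv). For (ii)$\Rightarrow$(iv), assume $\cl G$ has a perfect quantum strategy, i.e.\ there is some $p\in \cl C_{\qq}(\cl G)$; the implication (ii)$\Rightarrow$(iv) of Theorem~\ref{Thm:q} then furnishes a nonzero $C^*(\cl G)$, a finite-dimensional C*-algebra $\cl F$, a tracial state on $\cl F$, and a unital $*$-homomorphism $\pi : C^*(\cl G)\to \cl F$. Composing $\pi$ with a faithful unital $*$-representation of $\cl F$ on a finite-dimensional Hilbert space (which exists since $\cl F$ is a finite direct sum of matrix algebras) produces a unital, and hence nondegenerate, finite-dimensional representation of $C^*(\cl G)$, which is (iv). For the converse (iv)$\Rightarrow$(ii), suppose $C^*(\cl G)\neq\{0\}$ carries a nondegenerate finite-dimensional representation $\pi : C^*(\cl G)\to \cl B(\cl K)$ with $\dim\cl K<\infty$. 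Taking $\cl F=\cl B(\cl K)$ with its canonical normalised-trace tracial state $\tau$ supplies precisely the data of condition~(iv) of Theorem~\ref{Thm:q}; the construction in the proof of the implication (iv)$\Rightarrow$(iii) there shows that the scalars $p(a,b|x,y):=(\tau\circ\pi)(p_{x,a}q_{y,b})$ form a genuine (maximally entangled quantum) perfect strategy, so $\cl C_{\qq}(\cl G)\neq\emptyset$ and $\cl G$ has a perfect quantum strategy.

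The whole argument is bookkeeping on top of Theorem~\ref{Thm:q}, so I do not expect a genuine obstacle; the only point needing attention is the translation between the two guises of ``finite-dimensionality'' in the two statements of~(iv)---an abstract finite-dimensional C*-algebra bearing a trace, versus a nondegenerate representation on a finite-dimensional Hilbert space. This is routine: every finite-dimensional C*-algebra has a faithful unital finite-dimensional representation, while every $\cl B(\cl K)$ with $\cl K$ finite-dimensional is itself such a C*-algebra with a canonical trace, and nondegeneracy of the representation is exactly what ensures that the images $\pi(p_{x,a})$ and $\pi(q_{y,b})$ remain PVMs summing to the identity.
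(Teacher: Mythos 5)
Your proposal is correct and follows exactly the route the paper intends: the corollary is stated without proof as the ``existential'' consequence of Theorem~\ref{Thm:q}, and your argument supplies precisely the routine bookkeeping that is left implicit there. In particular, your careful handling of the translation between the two forms of finite-dimensionality---composing $\pi$ with a faithful unital representation of $\cl F$ in one direction, and taking $\cl F=\cl B(\cl K)$ with its normalised trace (noting that nondegeneracy gives unitality, hence that the images form PVMs) in the other---is exactly what is needed and is done correctly.
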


In view of Proposition \ref{Prop:solution}, Corollary \ref{Coro:qc} recovers 
\cite[Theorem 5]{cleve_perfect_2016}---see also \cite[Section 4]%
{cleve_characterization_2014}---as a particular case.

We also obtain the following result of \cite{kps}.


\begin{corollary}
The set $\cl C_{\mathrm{q}}^{\mathrm{s}} = \cl C_{\mathrm{q}}\cap \mathcal{C}^{\mathrm{s}}$ of
synchronous quantum correlations is equal to the set $\cl C_{\mathrm{qs}}^{%
\mathrm{s}} = \cl C_{\mathrm{qs}}\cap \cl C^{\mathrm{s}}$ of synchronous quantum
spatial correlations, as well as to the convex hull $\cl C_{\mathrm{qm}}^{%
\mathrm{s}}= \cl C_{\mathrm{qm}}\cap \cl C^{\mathrm{s}}$ of the synchronous quantum
correlation defined using a maximally entangled vector.
\end{corollary}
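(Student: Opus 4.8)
The plan is to recognise all three sets as the respective classes of perfect strategies for a single imitation game, and then to read off the conclusion from Theorem \ref{Thm:q}. Recall from Example \ref{Example:synchronicity} that the synchronous correlations are exactly the perfect strategies for the synchronicity game, i.e.\ $\cl C^{\mathrm{s}} = \cl C(\cl G^{\mathrm{s}})$, where $\lambda_{\cl G^{\mathrm{s}}}(a,b,x,y) = 0$ precisely when $x = y$ and $a \neq b$. Consequently, with the notation $\cl C_{\xx}(\lambda) = \cl C_{\xx} \cap \cl C(\cl G)$ from Section \ref{s_prel}, I would first record the identifications
$$\cl C_{\mathrm{q}}^{\mathrm{s}} = \cl C_{\mathrm{q}}(\cl G^{\mathrm{s}}), \qquad \cl C_{\mathrm{qs}}^{\mathrm{s}} = \cl C_{\mathrm{qs}}(\cl G^{\mathrm{s}}), \qquad \cl C_{\mathrm{qm}}^{\mathrm{s}} = \cl C_{\mathrm{qm}}(\cl G^{\mathrm{s}}),$$
each of which is immediate from the definitions, since intersecting a correlation class with $\cl C^{\mathrm{s}} = \cl C(\cl G^{\mathrm{s}})$ is precisely the operation producing $\cl C_{\xx}(\lambda_{\cl G^{\mathrm{s}}})$.

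The key step is the observation that $\cl G^{\mathrm{s}}$ is an imitation game, which is what makes Theorem \ref{Thm:q} applicable. Indeed, the synchronicity game is (trivially) harder than itself, hence synchronous in the sense of Example \ref{Example:synchronous}, and every synchronous game was shown there to be an imitation game. Granting this, the equivalence of conditions (i), (ii) and (iii) in Theorem \ref{Thm:q}, specialised to $\cl G^{\mathrm{s}}$, asserts exactly that a non-signalling correlation belongs to $\cl C_{\mathrm{qs}}(\cl G^{\mathrm{s}})$ if and only if it belongs to $\cl C_{\mathrm{q}}(\cl G^{\mathrm{s}})$ if and only if it belongs to $\cl C_{\mathrm{qm}}(\cl G^{\mathrm{s}})$. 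Combining this equivalence with the three identifications above yields the claimed equalities $\cl C_{\mathrm{q}}^{\mathrm{s}} = \cl C_{\mathrm{qs}}^{\mathrm{s}} = \cl C_{\mathrm{qm}}^{\mathrm{s}}$.

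I do not expect any genuine obstacle here, as the statement is essentially a corollary in the literal sense: its content is fully subsumed by Theorem \ref{Thm:q}. The only point meriting a moment's care is the verification that the synchronicity game qualifies as an imitation game, so that the theorem may be invoked. It is also worth noting that the two forward inclusions $\cl C_{\mathrm{qm}}^{\mathrm{s}} \subseteq \cl C_{\mathrm{q}}^{\mathrm{s}} \subseteq \cl C_{\mathrm{qs}}^{\mathrm{s}}$ come for free from the inclusions $\cl C_{\mathrm{qm}} \subseteq \cl C_{\mathrm{q}} \subseteq \cl C_{\mathrm{qs}}$ recorded in Section \ref{s_prel}, so that in effect only the single reverse inclusion $\cl C_{\mathrm{qs}}^{\mathrm{s}} \subseteq \cl C_{\mathrm{qm}}^{\mathrm{s}}$ requires the theorem.
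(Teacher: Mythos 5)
Your proof is correct and matches the paper's intended argument: the paper states this corollary without proof as an immediate consequence of Theorem \ref{Thm:q}, precisely because the synchronicity game $\cl G^{\mathrm{s}}$ is a synchronous game (hence an imitation game by Example \ref{Example:synchronous}) and the three sets in question are exactly $\cl C_{\mathrm{q}}(\cl G^{\mathrm{s}})$, $\cl C_{\mathrm{qs}}(\cl G^{\mathrm{s}})$ and $\cl C_{\mathrm{qm}}(\cl G^{\mathrm{s}})$. Your explicit verification of these identifications and of the applicability of the theorem is exactly the reasoning the paper leaves implicit.
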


One can similarly characterise the local perfect strategies for 
$\mathcal{G}$ in terms of traces on $C^{\ast }\left( \mathcal{G}\right)$.
We omit the proof since it follows closely the ideas in the proof of Theorems \ref{Thm:qc} and \ref{Thm:q}.


\begin{theorem}\label{th_localcha}
Suppose that $\mathcal{G}$ is an imitation game with input sets $X,Y$ and
output sets $A,B$. Let $p:A\times B\times X\times Y\rightarrow \left[ 0,1%
\right] $ be a correlation. The following statements are equivalent:

\begin{itemize}
\item[(i)] $p\in C_{\mathrm{loc}}\left( \mathcal{G}\right) $;

\item[(ii)] the C*-algebra of the game $C^{\ast }\left( \mathcal{G}\right) $ is
nonzero, and there exists a finite-dimensional abelian C*-algebra $\cl F$
with a tracial state $\tau $ and a unital *-homomorphism $\pi :C^{\ast
}\left( \mathcal{G}\right) \rightarrow \cl F$ such that $p\left( a,b|x,y\right)
=\left( \tau \circ \pi \right) \left( p_{xa}q_{yb}\right) $.
\end{itemize}
\end{theorem}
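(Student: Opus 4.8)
The plan is to exploit the special structure of finite-dimensional abelian C*-algebras, for which both the tracial-state condition and the homomorphism decompose completely, thereby reducing the statement to the elementary correspondence between characters (one-dimensional representations) of $C^{\ast}(\cl G)$ and deterministic perfect strategies. This mirrors the structure of the proofs of Theorems \ref{Thm:qc} and \ref{Thm:q}, but the commutativity of $\cl F$ removes the need for the Schmidt-decomposition and intersection-of-ranges constructions used there. The two basic facts I would isolate first are these. First, every finite-dimensional abelian C*-algebra is $\ast$-isomorphic to some $\bb{C}^{l}$, on which every state is automatically tracial and is given by a probability vector $(t_{1},\dots,t_{l})$, while every unital $\ast$-homomorphism $\pi:C^{\ast}(\cl G)\to\bb{C}^{l}$ splits as $\pi=\bigoplus_{k=1}^{l}\pi_{k}$ with each $\pi_{k}$ a character (the composition of $\pi$ with the $k$-th coordinate map). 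Second, via the defining relations (a)--(c) of Definition \ref{Definition:algebra}, the characters of $C^{\ast}(\cl G)$ are in bijection with the deterministic perfect strategies of $\cl G$.

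For the implication (i)$\Rightarrow$(ii), I would start from $p\in\cl C_{\loc}(\lambda)$ and invoke Theorem \ref{th_cp} to write $p=\sum_{k=1}^{l}t_{k}p_{k}$ as a finite convex combination of deterministic perfect strategies $p_{k}$ with $t_{k}>0$. Each $p_{k}$ is given by functions $f_{k}:X\to A$ and $g_{k}:Y\to B$ with $\lambda(x,y,f_{k}(x),g_{k}(y))=1$ for all $x,y$, and I would define a character $\pi_{k}$ by $\pi_{k}(p_{x,a})=\delta_{a,f_{k}(x)}$ and $\pi_{k}(q_{y,b})=\delta_{b,g_{k}(y)}$. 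Relations (a) and (b) hold since $f_{k},g_{k}$ are functions, while relation (c) holds because $\pi_{k}(p_{x,a}q_{y,b})$ can be nonzero only when $a=f_{k}(x)$ and $b=g_{k}(y)$, in which case $\lambda(x,y,a,b)=1$; hence, by the universal property of $C^{\ast}(\cl G)$, $\pi_{k}$ is a well-defined unital $\ast$-homomorphism into $\bb{C}$. Setting $\cl F=\bb{C}^{l}$, $\pi=\bigoplus_{k}\pi_{k}$, and letting $\tau$ be the state with weights $(t_{k})$, one computes $(\tau\circ\pi)(p_{x,a}q_{y,b})=\sum_{k}t_{k}\,\delta_{a,f_{k}(x)}\delta_{b,g_{k}(y)}=\sum_{k}t_{k}p_{k}(a,b|x,y)=p(a,b|x,y)$; in particular $C^{\ast}(\cl G)\neq\{0\}$ since it admits the character $\pi_{1}$.

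For (ii)$\Rightarrow$(i) I would reverse this. Writing $\cl F\cong\bb{C}^{l}$, the tracial state $\tau$ corresponds to a probability vector $(t_{1},\dots,t_{l})$ and $\pi=\bigoplus_{k}\pi_{k}$ with each $\pi_{k}$ a character. Each $\pi_{k}$ sends the PVM $(p_{x,a})_{a}$ to a family of scalars in $\{0,1\}$ summing to $1$, hence determines $f_{k}(x)\in A$ with $\pi_{k}(p_{x,f_{k}(x)})=1$, and similarly $g_{k}(y)\in B$. The key point is that relation (c) forces $(f_{k},g_{k})$ to be a perfect strategy: if $\lambda(x,y,f_{k}(x),g_{k}(y))=0$ then $p_{x,f_{k}(x)}q_{y,g_{k}(y)}=0$ in $C^{\ast}(\cl G)$, yet applying $\pi_{k}$ gives $1\cdot 1=1$, a contradiction. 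Thus each $p_{k}(a,b|x,y)=\delta_{a,f_{k}(x)}\delta_{b,g_{k}(y)}$ lies in $\cl C_{\deter}(\lambda)$, and $p=\sum_{t_{k}>0}t_{k}p_{k}$ exhibits $p$ as a convex combination of deterministic perfect strategies, so $p\in\cl C_{\loc}(\lambda)$ by Theorem \ref{th_cp}. The only genuinely load-bearing step is the character--deterministic-strategy bijection together with this use of relation (c); the rest is the routine bookkeeping of decomposing commutative algebras and their traces, which is why this argument is strictly easier than the quantum cases of Theorems \ref{Thm:qc} and \ref{Thm:q} and can reasonably be omitted in the text.
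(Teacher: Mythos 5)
Your proof is correct, but it takes a genuinely different---and more elementary---route than the one the paper points to. The paper omits the proof of Theorem \ref{th_localcha}, saying it follows the ideas of Theorems \ref{Thm:qc} and \ref{Thm:q}; those arguments are Hilbert-space based: they construct Bob-side projections $\Pi_{y,b}$ as intersections of ranges inside Alice's algebra (plus, in Theorem \ref{Thm:q}, a Schmidt-decomposition analysis) and then verify traciality of the vector state, and it is precisely there that conditions (a) and (b) of Definition \ref{Definition:imitation} are used. You bypass all of that machinery: you decompose $p\in\cl C_{\loc}(\lambda)$ as a finite convex combination of deterministic perfect strategies (exactly the fact, attributed to Theorem \ref{th_cp}, that the paper itself invokes in the proof of Theorem \ref{th_quotient}), and then exploit the bijection between deterministic perfect strategies and characters of $C^{\ast}(\cl G)$, which is immediate from the universal property in Definition \ref{Definition:algebra}. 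What your route buys: it is short, self-contained, Hilbert-space-free, and it makes visible that the imitation hypothesis is never used, so the local characterisation holds verbatim for an arbitrary non-signalling game; what the paper's route buys is uniformity, with the loc/q/qc characterisations appearing as specialisations of a single construction. Two small points you should make explicit rather than leave implicit: first, in (i)$\Rightarrow$(ii), perfectness of each deterministic summand $p_k$ follows from $t_k>0$ and nonnegativity, since $\lambda(x,y,a,b)=0$ forces $0=p(a,b|x,y)=\sum_k t_k p_k(a,b|x,y)$ and hence $p_k(a,b|x,y)=0$ for every $k$ (this is the same one-line argument the paper gives when proving $\lambda_{\deter}=\lambda_{\loc}$); second, in (ii)$\Rightarrow$(i), each coordinate map $\pi_k$ is unital because the unit of $\bb{C}^{l}$ is $(1,\dots,1)$, which is what guarantees $\sum_{a\in A}\pi_k(p_{x,a})=1$ and hence the existence of the functions $f_k$ and $g_k$.
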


\begin{corollary} Let $\cl G$ be an imitation game. Then $\cl G$ has a perfect local strategy if and only if there exists a unital *-homomorphism $\pi: C^*(\cl G) \to \bb C$.
\end{corollary}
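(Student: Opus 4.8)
The plan is to read this off from Theorem \ref{th_localcha} by specialising the finite-dimensional abelian C*-algebra $\cl F$ appearing there to the one-dimensional algebra $\bb C$, using that in the abelian case unital *-homomorphisms into $\bb C$ are exactly the characters.

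First, for the forward implication, suppose $\cl G$ has a perfect local strategy and fix $p\in \cl C_{\loc}(\cl G)$. Applying Theorem \ref{th_localcha} produces a nonzero finite-dimensional abelian C*-algebra $\cl F$, a tracial state $\tau$ on $\cl F$, and a unital *-homomorphism $\pi : C^*(\cl G)\to \cl F$. Since $\cl F$ is nonzero, finite-dimensional and abelian, it is *-isomorphic to $\bb C^m$ for some $m\geq 1$, and composing $\pi$ with any coordinate evaluation $\bb C^m\to \bb C$ (itself a unital *-homomorphism) yields the required unital *-homomorphism $C^*(\cl G)\to \bb C$.

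Conversely, given a unital *-homomorphism $\chi : C^*(\cl G)\to \bb C$, I would take $\cl F = \bb C$ with its unique tracial state $\tau = \mathrm{id}_{\bb C}$ and $\pi = \chi$, and set $p(a,b|x,y) = \chi(p_{x,a}q_{y,b})$. The only point to verify is that $p$ is a genuine non-signalling correlation: since $\chi$ is a character, each $\chi(p_{x,a})$ and $\chi(q_{y,b})$ is the image of a projection and hence lies in $\{0,1\}$, relations (a) and (b) of Definition \ref{Definition:algebra} force exactly one $a$ (resp.\ $b$) to give the value $1$, and commutativity of $\bb C$ gives $p(a,b|x,y) = \chi(p_{x,a})\chi(q_{y,b})$. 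Thus $p$ is the deterministic product distribution whose marginals are the point masses at the unique indices where $\chi(p_{x,a})=1$ and $\chi(q_{y,b})=1$; in particular $p$ is a correlation. Relation (c) then shows it is perfect (if $\lambda(x,y,a,b)=0$ then $p_{x,a}q_{y,b}=0$ in $C^*(\cl G)$, so $p(a,b|x,y)=0$), and Theorem \ref{th_localcha} (or a direct appeal to $\cl C_{\det}(\cl G)\subseteq \cl C_{\loc}(\cl G)$) gives $p\in \cl C_{\loc}(\cl G)$. Note that the existence of a unital $\chi$ automatically forces $C^*(\cl G)\neq \{0\}$, since $\chi(1)=1\neq 0$.

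I do not expect a genuine obstacle here: the content is entirely in Theorem \ref{th_localcha}, and the remaining work is the routine passage between a finite-dimensional abelian C*-algebra and its characters. The only mild care needed is checking that the formula $p(a,b|x,y)=\chi(p_{x,a}q_{y,b})$ defines a bona fide correlation before invoking the theorem in the reverse direction.
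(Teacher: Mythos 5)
Your proposal is correct and is essentially the intended argument: the paper states this corollary without proof as an immediate specialisation of Theorem \ref{th_localcha}, which is exactly what you do, replacing the finite-dimensional abelian $\cl F$ by $\bb C$ via a coordinate evaluation in one direction and, in the other, checking directly that a character $\chi$ yields the deterministic (hence local) perfect correlation $p(a,b|x,y)=\chi(p_{x,a})\chi(q_{y,b})$. Your care in verifying that this $p$ is a bona fide correlation before invoking the theorem (or the inclusion $\cl C_{\deter}(\lambda)\subseteq \cl C_{\loc}(\lambda)$) is exactly the right point to address, and nothing is missing.
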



\section{Mirror games}\label{s_mg}

In this section, we consider the subclass of mirror games and provide a 
different kind  of representation  of their perfect quantum commuting strategies in terms of traces.
The approach is Hilbert-space-free
and allows us to characterise the perfect quantum approximate strategies for 
these games as ones arising from amenable traces, extending significantly the corresponding result for 
synchronous games from \cite{kps}. 

Let $X,Y,A$ and $B$ be finite sets. 
Recall that 
$\cl A(X,A) = \ell^{\infty}(A) \ast_1 \cdots \ast_1 \ell^{\infty}(A)$, where the free product is taken $|X|$ times,
$(e_{x,a})_{a=1}^{|A|}$ is the canonical basis of $x$-th copy of $\ell^{\infty}(A)$, 
and $\sxa = {\rm span}\{e_{x,a} : x\in X, a\in A\}$. 
The canonical generators of $\cl A(Y,B)$ are denoted by $f_{y,b}$, and their span is denoted by $\cl S_{Y,B}$.

Let $\cl G = (X,Y,A,B,\lambda)$ be a non-signalling game. 
For $x\in X$, $y\in Y$, $a\in A$ and $b\in B$, set 
$$E_{x,y}^a = \{b\in B : \lambda(x,y,a,b) = 1\} \ \mbox{ and } \ E_{x,y}^b = \{a\in A : \lambda(x,y,a,b) = 1\}.$$
Recall that $\cl G$ is a \emph{mirror game} if 
there exist functions $\xi : X\to Y$ and $\eta : Y\to X$ such that, of every $x\in X$, we have 
$$E_{x,\xi(x)}^a \cap E_{x,\xi(x)}^{a'} = \emptyset, \ \ a,a'\in A, \ a\neq a',$$
and 
$$E_{\eta(y),y}^b \cap E_{\eta(y),y}^{b'} = \emptyset, \ \ b,b'\in B, \ b\neq b'.$$

In the statement of the following theorem, we use the correspondence $s\to p_s$ between 
states and non-signalling families highlighted in Section \ref{Section:correlations}.

\begin{theorem}\label{th_rest}
Let $\cl G = (X,Y,A,B,\lambda)$ be a mirror game, $p\in \cl C_{\qc}(\lambda)$ and
$s\in S(\cl A(X,A)\otimes_{\max}\cl A(Y,B))$ be such that $p = p_s$. 
Then 

(i) \ \ the functional $\tau : \cl A(X,A)\to \bb{C}$ given by $\tau(z) = s(z\otimes 1)$, $z\in \cl A(X,A)$, 
is a trace;

(ii) \ there exists a unital *-homomorphism $\rho : \cl A(Y,B)\to \cl A(X,A)$ such that
\begin{equation}\label{eq_psrho}
p(a,b|x,y) = \tau(e_{x,a} \rho(f_{y,b})), \ \ \ x\in X, y\in Y, a\in A, b\in B,
\end{equation}
and
\begin{equation}\label{eq_bar}
s(z\otimes f_{y_1,b_1}\cdots f_{y_k,b_k}) = \tau(z \rho(f_{y_k,b_k}\cdots f_{y_1,b_1})), 
\end{equation}
for all $z\in \cl A(A,X)$, $k\in \bb{N}$, $y_i\in Y, b_i\in B$, $i = 1,\dots,k$.
\end{theorem}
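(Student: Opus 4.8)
The plan is to realise the abstract state $s$ on a Hilbert space via the GNS construction and then import the argument of Theorem \ref{Thm:qc}, exploiting the feature that for a mirror game the auxiliary question can be chosen independently of the answers (namely $\eta(y)$ for Bob and $\xi(x)$ for Alice). This independence is exactly what lets the resulting projections be written down algebraically as elements of $\cl A(X,A)$, giving the Hilbert-space-free map $\rho$. First I would form the GNS triple $(\cl H,\pi_s,\Omega)$ of $s$ on $\cl A(X,A)\otimes_{\max}\cl A(Y,B)$ and set $\theta(z)=\pi_s(z\otimes 1)$, $P_{x,a}=\theta(e_{x,a})$ and $Q_{y,b}=\pi_s(1\otimes f_{y,b})$. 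These are commuting PVMs with $p(a,b|x,y)=\langle\Omega|P_{x,a}Q_{y,b}|\Omega\rangle$, so we sit inside the quantum commuting picture of Theorem \ref{Thm:qc} and $\tau(z)=s(z\otimes 1)=\langle\Omega|\theta(z)|\Omega\rangle$.

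Next I would define $\rho$ directly. Fixing for each $y$ a basepoint $b_0=b_0(y)\in B$, put
\[
\rho(f_{y,b}) = \sum_{a\in E_{\eta(y),y}^b} e_{\eta(y),a}\quad(b\neq b_0),\qquad \rho(f_{y,b_0}) = 1 - \sum_{b\neq b_0}\rho(f_{y,b})
\]
in $\cl A(X,A)$. The mirror condition $E_{\eta(y),y}^b\cap E_{\eta(y),y}^{b'}=\emptyset$ makes these pairwise orthogonal projections, and by construction they sum to $1$, so $(\rho(f_{y,b}))_{b\in B}$ is a PVM in $\cl A(X,A)$ for each $y$; the universal property of $\cl A(Y,B)=\ell^\infty(B)\ast_1\cdots\ast_1\ell^\infty(B)$ then yields a unital $*$-homomorphism $\rho:\cl A(Y,B)\to\cl A(X,A)$. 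Writing $\Pi_{y,b}=\theta(\rho(f_{y,b}))$, the mirror condition together with the perfect-strategy hypothesis gives $\Pi_{y,b}Q_{y,b'}|\Omega\rangle=\delta_{b,b'}Q_{y,b}|\Omega\rangle$ exactly as in the derivation of (\ref{eq_piq})--(\ref{eq_piqsame}), whence $\Pi_{y,b}|\Omega\rangle=Q_{y,b}|\Omega\rangle$ for all $b$. The crucial simplification over Theorem \ref{Thm:qc} is that the single question $\eta(y)$ already does the work, so no intersection over $x\in X$ and no passage to a cyclic subspace are required.

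For (i), I would introduce symmetrically $\Xi_{x,a}=\sum_{b\in E_{x,\xi(x)}^a}Q_{\xi(x),b}$ and check $\Xi_{x,a}|\Omega\rangle=P_{x,a}|\Omega\rangle$ in the same way. The ``sliding'' computation in the proof of Theorem \ref{Thm:qc} then applies verbatim to show that the vector state $T\mapsto\langle\Omega|T|\Omega\rangle$ is tracial on the C*-algebra generated by the $P_{x,a}$. Since $\theta$ is a $*$-homomorphism and $\tau=\langle\Omega|\theta(\cdot)|\Omega\rangle$, this traciality pushes back to $\tau(zw)=\tau(wz)$ on $\cl A(X,A)$, proving (i). For (ii) the engine is a one-line reversal lemma: by induction on $k$, using that each $\Pi_{y,b}$ commutes with each $Q_{y',b'}$ and that $\Pi_{y,b}|\Omega\rangle=Q_{y,b}|\Omega\rangle$, one obtains
\[
Q_{y_1,b_1}\cdots Q_{y_k,b_k}|\Omega\rangle = \Pi_{y_k,b_k}\cdots\Pi_{y_1,b_1}|\Omega\rangle .
\]
Pairing with $\theta(z)\Omega$ and using that $\theta\circ\rho$ turns $f$-words into $\Pi$-words gives
\[
s(z\otimes f_{y_1,b_1}\cdots f_{y_k,b_k}) = \langle\Omega|\theta(z)\,\theta(\rho(f_{y_k,b_k}\cdots f_{y_1,b_1}))|\Omega\rangle = \tau\big(z\,\rho(f_{y_k,b_k}\cdots f_{y_1,b_1})\big),
\]
which is (\ref{eq_bar}); equation (\ref{eq_psrho}) is the case $z=e_{x,a}$, $k=1$.

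The main obstacle, and the only genuinely non-formal point, is arranging that $\sum_b\rho(f_{y,b})=1$ \emph{inside} $\cl A(X,A)$: this can fail if some answer $a$ to the question $\eta(y)$ admits no acceptable response, i.e.\ $\bigcup_b E_{\eta(y),y}^b\subsetneq A$. This is precisely what the basepoint trick above repairs, and it is harmless for two compatible reasons: the discarded generators satisfy $\tau(e_{\eta(y),a})=p(a|\eta(y))=0$, and, more to the point, the modified $b_0$-term still obeys $\Pi_{y,b_0}|\Omega\rangle=(I-\sum_{b\neq b_0}Q_{y,b})|\Omega\rangle=Q_{y,b_0}|\Omega\rangle$, so the identity $\Pi_{y,b}|\Omega\rangle=Q_{y,b}|\Omega\rangle$ persists for every $b$ and (\ref{eq_psrho})--(\ref{eq_bar}) hold on the nose.
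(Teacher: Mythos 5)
Your proof is correct, and its algebraic core coincides with the paper's: the same map $\rho(f_{y,b})=\sum_{a\in E^{b}_{\eta(y),y}}e_{\eta(y),a}$, the same use of the mirror condition to make these projections pairwise orthogonal, and the same basepoint repair when $\bigcup_{b\in B}E^{b}_{\eta(y),y}\subsetneq A$. What genuinely differs is the engine. The paper is deliberately Hilbert-space-free (this is the announced point of Section \ref{s_mg}): it never leaves $\cl A(X,A)\otimes_{\max}\cl A(Y,B)$, defines $u_1\sim u_2$ to mean $s(u_1-u_2)=0$, checks that $h_{x,a}=e_{x,a}\otimes 1-1\otimes p_{x,a}$ and $h_{y,b}=q_{y,b}\otimes 1-1\otimes f_{y,b}$ satisfy $h^2\sim 0$, and then uses the Cauchy--Schwarz inequality for the state $s$ to obtain the sliding relations $ze_{x,a}\otimes 1\sim e_{x,a}z\otimes 1$ and $zq_{y,b}\otimes w\sim z\otimes wf_{y,b}$, from which (i) follows by induction on word length and (ii) by induction on $k$. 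Your GNS argument is the exact operator-theoretic shadow of this: since $s(h^{*}h)=0$ if and only if $\pi_s(h)\Omega=0$, your identities $\Pi_{y,b}\Omega=Q_{y,b}\Omega$ and $\Xi_{x,a}\Omega=P_{x,a}\Omega$ are precisely the paper's relations $q_{y,b}\otimes 1\sim 1\otimes f_{y,b}$ and $e_{x,a}\otimes 1\sim 1\otimes p_{x,a}$, and the sliding computation you import from Theorem \ref{Thm:qc} plays the role of the paper's word-length induction. Your route buys economy: it reuses Theorem \ref{Thm:qc} wholesale, and you correctly observe that the mirror hypothesis (answer-independent partner questions $\xi(x)$, $\eta(y)$) removes the need for the intersections of ranges and the cyclic-subspace reduction that Theorem \ref{Thm:qc} requires. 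The paper's route buys a representation-free argument whose state-level manipulations transfer verbatim to states on the minimal tensor product, which is exactly how they are reused in the proof of Theorem \ref{th_qaamen}. One detail is in your favour: your basepoint definition $\rho(f_{y,b_0})=1-\sum_{b\neq b_0}\rho(f_{y,b})$ manifestly makes $(\rho(f_{y,b}))_{b\in B}$ a projection-valued partition of unity, whereas the paper's formula $q_{y,b_0}=\sum_{a\in E_{y,0}}e_{\eta(y),a}$, read literally, omits the set $E^{b_0}_{\eta(y),y}$ and therefore does not sum to $1$; your formulation is the correct repair of that step.
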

\begin{proof}
(i) We assume first that
\begin{equation}\label{eq_fullun}
\cup_{a\in A} E_{x,\xi(x)}^a = B \mbox{ and } \cup_{b\in B} E_{\eta(y),y}^b = A, \ \ x\in X, y\in Y.
\end{equation}
For $x\in X, y\in Y, a\in A$ and $b\in B$, let 
$$p_{x,a} = \sum_{b\in E_{x,\xi(x)}^a} f_{\xi(x),b}, \ \ \ q_{y,b} = \sum_{a\in E_{\eta(y),y}^b} e_{\eta(y),a}.$$
By (\ref{eq_fullun}), $\sum_{b\in B}q_{y,b} = 1$, for all $y\in Y$, and the universal property of 
$\cl A(Y,B)$ implies that the assignment $f_{y,b}\to q_{y,b}$, $y\in Y$, $b\in B$, 
extends to a unital *-homomorphism $\rho : \cl A(Y,B)\to \cl A(X,A)$. 

If $u_1,u_2\in \cl A(X,A)\otimes_{\max}\cl A(Y,B)$, write 
$u_1\sim u_2$ if $s(u_1 - u_2) = 0$. Clearly, $\sim$ is an equivalence relation. 
Fix $x\in X$ and $a\in A$. Then
$$s(e_{x,a}\otimes 1) = \sum_{b\in B} s(e_{x,a}\otimes f_{\xi(x),b}) = \sum_{b\in E_{x,\xi(x)}^a} s(e_{x,a}\otimes f_{\xi(x),b}) 
= s(e_{x,a}\otimes p_{x,a}).$$
On the other hand, if $a'\neq a$ then 
$E_{x,\xi(x)}^{a'} \cap E_{x,\xi(x)}^{a} = \emptyset$ and so 
$s(e_{x,a'}\otimes f_{\xi(x),b}) = 0$ whenever $b\in E_{x,\xi(x)}^a$, implying
$$s(e_{x,a'}\otimes p_{x,a}) = \sum_{b\in E_{x,\xi(x)}^a} s(e_{x,a'}\otimes f_{\xi(x),b}) = 0.$$
It follows that 
$$s(1\otimes p_{x,a}) = \sum_{a'\in A} s(e_{x,a'}\otimes p_{x,a}) = s(e_{x,a}\otimes p_{x,a}).$$
Thus,
$$e_{x,a}\otimes 1 \sim  e_{x,a}\otimes p_{x,a} \sim 1\otimes p_{x,a}, \ \ \ x\in X, a\in A.$$

Write $h_{x,a} = e_{x,a}\otimes 1  - 1\otimes p_{x,a}$. 
Clearly, $h_{x,a}$ is selfadjoint and 
$$h_{x,a}^2 = e_{x,a}\otimes 1  - e_{x,a}\otimes p_{x,a} - e_{x,a}\otimes p_{x,a} + 1\otimes p_{x,a};$$
thus, $h_{x,a}^2 \sim 0$. 
The Cauchy-Schwarz inequality now implies 
\begin{equation}\label{eq_uhe}
u h_{x,a} \sim 0 \mbox{ and } h_{x,a} u \sim 0, \ \ x\in X, a\in A, \ u\in \cl A(X,A)\otimes_{\max}\cl A(Y,B).
\end{equation}
In particular, 
\begin{equation}\label{eq_zeez}
ze_{x,a}\otimes 1 \sim z\otimes p_{x,a} \sim e_{x,a} z\otimes 1, \ \ x\in X, a\in A, \ z\in \cl A(X,A).
\end{equation}
Similarly, setting $h_{y,b} = q_{y,b}\otimes 1  - 1\otimes f_{y,b}$, where $y\in Y$ and $b\in B$, 
we obtain $h_{y,b}^2 \sim 0$, and therefore
\begin{equation}\label{eq_zeez2}
zq_{y,b}\otimes 1 \sim z\otimes f_{y,b} \sim q_{y,b} z\otimes 1, \ \ y\in Y, b\in B, \ z\in \cl A(X,A),
\end{equation}
and
\begin{equation}\label{eq_zeez2}
zq_{y,b}\otimes w \sim z\otimes w f_{y,b}, \ \ y\in Y, b\in B, \ z\in \cl A(X,A), w\in \cl A(Y,B).
\end{equation}

It is clear that $\tau$ is a state on $\cl A(X,A)$. 
Let $z$ and $w$ be words on the set $\cl E := \{e_{x,a} : x\in X, a\in A\}$. 
We show by induction on the length $|w|$ of $w$ that 
\begin{equation}\label{eq_zwwz}
zw\otimes 1 \sim wz\otimes 1. 
\end{equation}
In the case $|w| = 1$, the claim reduces to (\ref{eq_zeez}). Suppose (\ref{eq_zwwz}) holds if $|w|\leq n-1$.
Let $|w| = n$ and write $w = w'e$, where $e\in \cl E$. Then, using (\ref{eq_zeez}), we have 
$$zw \otimes 1 = zw'e \otimes 1 \sim ezw' \otimes 1 \sim w'ez \otimes 1 = wz \otimes 1.$$
From (\ref{eq_zwwz}) and the fact that 
the set of all linear combinations of words on $\cl E$ is dense in $\cl A$, we conclude that $\tau$ is a trace on $\cl A(X,A)$. 

Now assume that the conditions from (\ref{eq_fullun}) are not fulfilled. Choose $a_0\in A$ and $b_0\in B$, 
and define $p_{x,a}$, $a\neq a_0$, $q_{y,b}$, $b\neq b_0$,
as in (\ref{eq_fullun}).  
Set $E_{y,0} = A\setminus \left(\cup_{b\in B} E_{\eta(y),y}^b\right)$, 
$F_{x,0} = B\setminus\left(\cup_{a\in A} E_{x,\xi(x)}^a\right)$, and let 
$$p_{x,a_0} = \sum_{b\in F_{x,0}} f_{\xi(x),b}, \ \ \ q_{y,b_0} = \sum_{a\in E_{y,0}} e_{\eta(y),a}.$$
The proof thereafter proceeds as before.

(ii) By (\ref{eq_zeez2}), 
$$p(a,b|x,y) = s(e_{x,a}\otimes f_{y,b}) = s(e_{x,a}q_{y,b}\otimes 1) = \tau(e_{x,a}q_{y,b}) = \tau(e_{x,a} \rho(f_{y,b})),$$
for all $x\in X, y\in Y, a\in A, b\in B$.
We show (\ref{eq_bar}) by induction on $k$. If $k = 1$, the claim follows from (\ref{eq_zeez2}). 
Assuming validity for up to $k-1$ terms, using (\ref{eq_zeez2}) we have 
\begin{eqnarray*}
s(z\otimes f_{y_1,b_1}\cdots f_{y_k,b_k}) 
& = & 
s(z q_{y_k,b_k}\otimes f_{y_1,b_1}\cdots f_{y_{k-1},b_{k-1}})\\
& = & 
\tau(z \rho(f_{y_k,b_k}) \rho(f_{y_{k-1},b_{k-1}}\cdots f_{y_{1},b_{1}}))\\
& = & 
\tau(z \rho(f_{y_k,b_k} f_{y_{k-1},b_{k-1}}\cdots f_{y_{1},b_{1}})).
\end{eqnarray*}
\end{proof}

\noindent {\bf Remark. } Theorem \ref{th_rest} is a generalisation 
of \cite[Theorem 5.5]{paulsen_estimating_2016}, which was concerned with synchronous games, 
in two directions: it specifies concretely the
C*-algebra involved in the strategy representation, and it extends this representation to the 
wider class of mirror games.

\medskip

Recall that, if $\cl A$ is a C*-algebra, its opposite C*-algebra $\cl A^{\rm op}$ is defined to be the same 
involutive normed space, whose elements are denoted by $a^{\rm op}$, $a\in \cl A$, 
but equipped with the product given by $a^{\rm op}b^{\rm op} = (ba)^{\rm op}$. 
The following lemma was established in \cite{kps}.

\begin{lemma}\label{l_op}
There exists a *-isomorphism $\gamma : \cl A(X,A)\to \cl A(X,A)^{\rm op}$ such that 
$$\gamma(e_{x_1,a_1}\cdots e_{x_k,a_k}) = (e_{x_k,a_k}\cdots e_{x_1,a_1})^{\rm op}, 
\ \ x_i\in X, a_i\in A, i = 1,\dots,k, k\in \bb{N}.$$
\end{lemma}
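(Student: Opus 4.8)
The plan is to build $\gamma$ directly from the universal property of $\cl A(X,A)$ rather than by manipulating words by hand. Recall that $\cl A(X,A)$ is the universal unital C*-algebra generated by projections $(e_{x,a})_{x\in X, a\in A}$ subject only to the relations $\sum_{a\in A} e_{x,a} = 1$ for each $x\in X$, with no relations linking distinct copies of $\ell^\infty(A)$. I would first record that the opposite algebra $\cl A(X,A)^{\rm op}$ is again a unital C*-algebra, carrying the same involution in the sense that $(a^{\rm op})^* = (a^*)^{\rm op}$, so that the C*-identity and positivity are preserved.

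Next I would observe that the elements $e_{x,a}^{\rm op}$ are projections in $\cl A(X,A)^{\rm op}$ satisfying the defining relations. Indeed, $(e_{x,a}^{\rm op})^* = (e_{x,a}^*)^{\rm op} = e_{x,a}^{\rm op}$ since $e_{x,a}$ is self-adjoint, while $e_{x,a}^{\rm op}\, e_{x,a}^{\rm op} = (e_{x,a} e_{x,a})^{\rm op} = e_{x,a}^{\rm op}$ since $e_{x,a}$ is idempotent, and $\sum_{a\in A} e_{x,a}^{\rm op} = \bigl(\sum_{a\in A} e_{x,a}\bigr)^{\rm op} = 1$ for each $x$. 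Hence the universal property of $\cl A(X,A)$ yields a unique unital *-homomorphism $\gamma : \cl A(X,A) \to \cl A(X,A)^{\rm op}$ with $\gamma(e_{x,a}) = e_{x,a}^{\rm op}$. The word formula then follows by reading off the reversed multiplication $u^{\rm op} v^{\rm op} = (vu)^{\rm op}$: since $\gamma$ is multiplicative, an immediate induction on $k$ gives $\gamma(e_{x_1,a_1}\cdots e_{x_k,a_k}) = e_{x_1,a_1}^{\rm op}\cdots e_{x_k,a_k}^{\rm op} = (e_{x_k,a_k}\cdots e_{x_1,a_1})^{\rm op}$, which is exactly the claimed identity.

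To see that $\gamma$ is a *-isomorphism I would produce its inverse by symmetry. Applying the same construction to the C*-algebra $\cl A(X,A)^{\rm op}$, and using the canonical identification $(\cl A(X,A)^{\rm op})^{\rm op} = \cl A(X,A)$ under which $(a^{\rm op})^{\rm op} = a$, gives a unital *-homomorphism $\tilde\gamma : \cl A(X,A)^{\rm op} \to \cl A(X,A)$ with $\tilde\gamma(e_{x,a}^{\rm op}) = e_{x,a}$. The composite $\tilde\gamma\circ\gamma$ is then a *-endomorphism of $\cl A(X,A)$ fixing every generator $e_{x,a}$; as these generate $\cl A(X,A)$ as a C*-algebra, $\tilde\gamma\circ\gamma = \mathrm{id}$, and symmetrically $\gamma\circ\tilde\gamma = \mathrm{id}$, so $\gamma$ is bijective.

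The only genuinely delicate point here is conceptual rather than computational. One is tempted to define $\gamma$ by the assignment $a\mapsto (a^*)^{\rm op}$, which does reverse words and does intertwine the involutions; but this map is \emph{conjugate}-linear, hence an anti-linear *-isomorphism, not the complex-linear *-isomorphism required. On the self-adjoint generators the two prescriptions agree, so it is essential to take the complex-linear extension supplied by the universal property, $e_{x,a}\mapsto e_{x,a}^{\rm op}$, rather than the involution-twisted one. Once this is fixed, every remaining step is a routine verification of the relations.
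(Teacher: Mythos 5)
Your proof is correct. Note that the paper itself supplies no argument for Lemma \ref{l_op} — it simply quotes the result from \cite{kps} — and your universal-property construction (the elements $e_{x,a}^{\rm op}$ are projections summing to $1$ in $\cl A(X,A)^{\rm op}$, hence induce a unital *-homomorphism $\gamma$, whose inverse comes from applying ${}^{\rm op}$ to the construction and checking that both composites fix the generators) is exactly the standard proof of this fact, so there is nothing genuinely different to compare. Your closing remark, distinguishing the complex-linear $\gamma$ from the canonical conjugate-linear map $a\mapsto (a^*)^{\rm op}$ with which it agrees on the self-adjoint generators, is a worthwhile point of care rather than a gap.
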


\begin{theorem}\label{th_qcamen}
Let $\cl G = (X,Y,A,B,\lambda)$ be a mirror game and $p\in \cl C_{\rm ns}(\lambda)$.
The following are equivalent: 

(i) \ $p\in \cl C_{\rm qc}(\lambda)$; 

(ii) there exist a trace $\tau : \cl A(X,A)\to \bb{C}$ and a unital *-homomorphism $\rho : \cl A(Y,B)\to \cl A(X,A)$ with 
$\rho(\cl S_{X,A})\subseteq \cl S_{Y,B}$ such that 
$$p(a,b|x,y) = \tau(e_{x,a} \rho(f_{y,b})), \ \ \ x\in X, y\in Y, a\in A, b\in B.$$
\end{theorem}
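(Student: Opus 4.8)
The plan is to prove the two implications separately, obtaining (i)$\Rightarrow$(ii) as an immediate consequence of Theorem \ref{th_rest} and establishing the converse (ii)$\Rightarrow$(i) by a tracial GNS construction modelled on the proof of the implication (ii)$\Rightarrow$(i) in Theorem \ref{Thm:qc}.

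For (i)$\Rightarrow$(ii), I would start from $p\in\cl C_{\qc}(\lambda)$ and invoke Corollary \ref{th_qcgen} to produce a state $s$ on $\cl A(X,A)\otimes_{\max}\cl A(Y,B)$ with $p=p_s$. Theorem \ref{th_rest} then supplies exactly the required data: the functional $\tau(z)=s(z\otimes 1)$ is a trace on $\cl A(X,A)$, and there is a unital *-homomorphism $\rho:\cl A(Y,B)\to\cl A(X,A)$ with $p(a,b|x,y)=\tau(e_{x,a}\rho(f_{y,b}))$. The only additional point to record is the containment $\rho(\cl S_{Y,B})\subseteq\cl S_{X,A}$; this is read off from the explicit formula $\rho(f_{y,b})=q_{y,b}=\sum_{a\in E^b_{\eta(y),y}}e_{\eta(y),a}$ produced in the proof of Theorem \ref{th_rest} (and likewise from the modified generators $q_{y,b_0}$ used when (\ref{eq_fullun}) fails), since each such $q_{y,b}$ lies in $\cl S_{X,A}$ and the $f_{y,b}$ span $\cl S_{Y,B}$.

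For (ii)$\Rightarrow$(i), I would pass to the GNS space $\cl H=L^2(\cl A(X,A),\tau)$ with cyclic unit vector $|1\rangle$, and write $\pi_\tau(z)|w\rangle=|zw\rangle$ and $\pi_\tau^{\rm op}(z)|w\rangle=|wz\rangle$ for the left and right regular representations attached to the trace $\tau$. Setting $P_{x,a}=\pi_\tau(e_{x,a})$ and $Q_{y,b}=\pi_\tau^{\rm op}(\rho(f_{y,b}))$, I expect $(P_{x,a})_{a\in A}$ to be a PVM because the $e_{x,a}$ are projections with $\sum_{a}e_{x,a}=1$, and $(Q_{y,b})_{b\in B}$ to be a PVM because each $\rho(f_{y,b})$ is a projection in $\cl A(X,A)$ and $\sum_{b}Q_{y,b}=\pi_\tau^{\rm op}(\rho(1))=I$. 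Since left and right multiplications always commute we get $P_{x,a}Q_{y,b}=Q_{y,b}P_{x,a}$, and the computation $\langle 1|P_{x,a}Q_{y,b}|1\rangle=\tau(e_{x,a}\rho(f_{y,b}))=p(a,b|x,y)$ is then formal. This shows $p\in\cl C_{\qc}$, and since $p\in\cl C_{\rm ns}(\lambda)$ by hypothesis, $p\in\cl C_{\qc}\cap\cl C(\cl G)=\cl C_{\qc}(\lambda)$.

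The step deserving the most care, and the only place where a hypothesis is genuinely used in the converse, is checking that $Q_{y,b}$ is a self-adjoint projection, i.e.\ that $\pi_\tau^{\rm op}$ is a bona fide *-representation. This is precisely where traciality enters: the equality $\langle\pi_\tau^{\rm op}(z)w_1|w_2\rangle=\tau(z^*w_1^*w_2)$ matches $\langle w_1|\pi_\tau^{\rm op}(z^*)w_2\rangle=\tau(w_1^*w_2z^*)$ exactly because $\tau$ is a trace, giving $\pi_\tau^{\rm op}(z)^*=\pi_\tau^{\rm op}(z^*)$ and hence carrying the projection $\rho(f_{y,b})$ to a projection. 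Everything else, namely boundedness of the regular representations, the PVM relations, and the commutation of the two families, is routine once this is settled; I note that the containment $\rho(\cl S_{Y,B})\subseteq\cl S_{X,A}$ plays no role in the converse and is included only to sharpen the characterisation.
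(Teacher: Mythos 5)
Your proof is correct, and the forward implication is exactly the paper's: both you and the authors obtain (i)$\Rightarrow$(ii) by feeding the state from Corollary \ref{th_qcgen} into Theorem \ref{th_rest} and reading off the containment $\rho(\cl S_{Y,B})\subseteq \cl S_{X,A}$ from the explicit formula $\rho(f_{y,b})=q_{y,b}=\sum_{a\in E^b_{\eta(y),y}}e_{\eta(y),a}$ (you are also right to read the containment in the statement this way round; as printed it is a typo). Where you diverge is the converse. The paper does not run a GNS construction there: it defines the bilinear form $\phi(z\otimes w^{\rm op})=\tau(zw)$, invokes the standard fact that $\phi$ extends to a state on $\cl A(X,A)\otimes_{\max}\cl A(X,A)^{\rm op}$, and then sets $s=\phi\circ(\mathrm{id}\otimes\gamma)\circ(\mathrm{id}\otimes\rho)$ using the *-isomorphism $\gamma:\cl A(X,A)\to\cl A(X,A)^{\rm op}$ of Lemma \ref{l_op}, concluding via Corollary \ref{th_qcgen}. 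Your route instead builds the commuting PVMs directly on $L^2(\cl A(X,A),\tau)$ via the left and right regular representations, which is precisely the technique the paper itself uses for (ii)$\Rightarrow$(i) of Theorem \ref{Thm:qc}; your identification of traciality as the point making $\pi_\tau^{\rm op}$ a *-representation is the right one (traciality is also what makes $\pi_\tau^{\rm op}$ bounded and well defined on the quotient, which you correctly file under routine), and your observation that the containment $\rho(\cl S_{Y,B})\subseteq\cl S_{X,A}$ is not needed in the converse agrees with the paper. The trade-off: your argument is more self-contained and elementary, avoiding the opposite algebra, Lemma \ref{l_op}, and the extension claim for $\phi$; the paper's formulation buys uniformity, since replacing $\otimes_{\max}$ by $\otimes_{\min}$ and ``trace'' by ``amenable trace'' in the very same three-line construction yields the converse of Theorem \ref{th_qaamen}, where a bare GNS argument would not suffice.
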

\begin{proof}
(i)$\Rightarrow$(ii) follows from Theorem \ref{th_rest} and its proof, using the fact that 
the projections $q_{y,b}$ defined therein lie in $\cl S_{X,A}$.

(ii)$\Rightarrow$(i) Let $\phi : \cl A(X,A)\otimes \cl A(X,A)^{\rm op} \to \bb{C}$ be the bilinear form given by 
$$\phi(z\otimes w^{\rm op}) = \tau(zw), \ \ \ z,w \in \cl A(X,A).$$
It is well-known that $\phi$ extends to a state on $\cl A(X,A)\otimes_{\max} \cl A(X,A)^{\rm op}$
(this can be seen, for example, by noting that $\phi$ is unital and jointly completely positive, and using 
results from \cite{kptt_tensor}). 
Let $s : \cl A(X,A)\otimes_{\max}\cl A(Y,B)\to \bb{C}$ be the linear functional defined by 
$$s = \phi \circ ({\rm id}\otimes\gamma)\circ ({\rm id} \otimes \rho).$$
Then $s$ is a state on $\cl A(X,A)\otimes_{\max}\cl A(Y,B)$, and 
$$s(e_{x,a}\otimes f_{y,b})
= \phi(e_{x,a} \otimes \gamma(\rho(f_{y,b})))
= \phi(e_{x,a} \otimes \rho(f_{y,b})^{\rm op}) = \tau(e_{x,a}\rho(f_{y,b})).$$
By Corollary \ref{th_qcgen}, $p\in \cl C_{\rm qc}(\lambda)$.
\end{proof}

For the next theorem, we will need the notion of an amenable trace; we refer the reader to 
\cite{brown_c*-algebras_2008}) for its many equivalent definitions and properties. The definition that we shall use is that a trace $\tau$ on a C*-algebra $\cl A$  is {\it amenable} if and only if the map on the algebraic tensor product
$\phi: \cl A \otimes \cl A^{op} \to \bb C$ given by $\phi(a \otimes b^{op}) = \tau(ab)$ is bounded with respect to the minimal C*-tensor norm.


\begin{theorem}\label{th_qaamen}
Let $\cl G = (X,Y,A,B,\lambda)$ be a mirror game and $p\in \cl C_{\rm ns}(\lambda)$.
The following are equivalent: 

(i) \ $p\in \cl C_{\rm qa}(\lambda)$; 

(ii) there exist an amenable trace $\tau : \cl A(X,A)\to \bb{C}$ and a unital *-homomorphism $\rho : \cl A(Y,B)\to \cl A(X,A)$ with 
$\rho(\cl S_{X,A})\subseteq \cl S_{Y,B}$ such that 
$$p(a,b|x,y) = \tau(e_{x,a} \rho(f_{y,b})), \ \ \ x\in X, y\in Y, a\in A, b\in B.$$
\end{theorem}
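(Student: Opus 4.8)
The plan is to mirror the proof of Theorem~\ref{th_qcamen}, replacing the maximal tensor product by the minimal one throughout and upgrading ``trace'' to ``amenable trace''. Three ingredients drive this: Corollary~\ref{th_qagen}, which identifies $\cl C_{\rm qa}(\lambda)$ with the states $s$ on $\cl A(X,A)\otimes_{\min}\cl A(Y,B)$ satisfying $p(a,b|x,y)=s(e_{x,a}\otimes f_{y,b})$; the given definition of amenability, i.e.\ that $\tau$ is amenable exactly when the unital functional $\phi(z\otimes w^{\rm op})=\tau(zw)$ is bounded for the minimal tensor norm on $\cl A(X,A)\otimes\cl A(X,A)^{\rm op}$, and hence extends to a state on $\cl A(X,A)\otimes_{\min}\cl A(X,A)^{\rm op}$; and the functoriality of $\otimes_{\min}$, which turns $*$-homomorphisms of the factors into $*$-homomorphisms of their minimal tensor products.

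For (ii)$\Rightarrow$(i) I would argue as in Theorem~\ref{th_qcamen}, but keeping track of tensor norms. Amenability of $\tau$ gives a state $\phi$ on $\cl A(X,A)\otimes_{\min}\cl A(X,A)^{\rm op}$ with $\phi(z\otimes w^{\rm op})=\tau(zw)$. Since $\rho:\cl A(Y,B)\to\cl A(X,A)$ and $\gamma:\cl A(X,A)\to\cl A(X,A)^{\rm op}$ (Lemma~\ref{l_op}) are $*$-homomorphisms, the maps ${\rm id}\otimes\rho$ and ${\rm id}\otimes\gamma$ are $*$-homomorphisms of the corresponding minimal tensor products, so
\[
s=\phi\circ({\rm id}\otimes\gamma)\circ({\rm id}\otimes\rho)
\]
is a state on $\cl A(X,A)\otimes_{\min}\cl A(Y,B)$. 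Because $\rho(f_{y,b})\in\cl S_{X,A}$ is a linear combination of single generators, $\gamma(\rho(f_{y,b}))=\rho(f_{y,b})^{\rm op}$, whence $s(e_{x,a}\otimes f_{y,b})=\tau(e_{x,a}\rho(f_{y,b}))=p(a,b|x,y)$; Corollary~\ref{th_qagen} then yields $p\in\cl C_{\rm qa}$, and as $p$ is a perfect strategy, $p\in\cl C_{\rm qa}(\lambda)$.

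For (i)$\Rightarrow$(ii) I would first use Corollary~\ref{th_qagen} to obtain a state $s$ on $\cl A(X,A)\otimes_{\min}\cl A(Y,B)$ realizing $p$. The crucial observation is that the proof of Theorem~\ref{th_rest} uses only the Cauchy--Schwarz inequality for a state together with the fact that the two tensor factors commute; both persist for states on the minimal tensor product, so that proof applies verbatim and produces a trace $\tau(z)=s(z\otimes 1)$ and a $*$-homomorphism $\rho:\cl A(Y,B)\to\cl A(X,A)$ with $\rho(\cl S_{Y,B})\subseteq\cl S_{X,A}$ and $p(a,b|x,y)=\tau(e_{x,a}\rho(f_{y,b}))$. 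It remains to show that this particular $\tau$ is amenable. Here I would introduce the \emph{dual} $*$-homomorphism $\rho':\cl A(X,A)\to\cl A(Y,B)$ determined by $\rho'(e_{x,a})=p_{x,a}$, which is legitimate because the mirror condition makes $\{p_{x,a}\}_{a\in A}$ a projection-valued measure for each $x$. Iterating the relation $s(z'e_{x,a}\otimes w)=s(z'\otimes wp_{x,a})$, which is the content of \eqref{eq_uhe}, gives for a word $u=e_{x_1,a_1}\cdots e_{x_k,a_k}$ the identity
\[
\tau(zu)=s\bigl(z\otimes p_{x_k,a_k}\cdots p_{x_1,a_1}\bigr)=s\bigl(z\otimes\rho'(\tilde u)\bigr),
\]
where $\tilde u$ denotes the reversed word. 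Setting $\Theta=\rho'\circ\gamma^{-1}:\cl A(X,A)^{\rm op}\to\cl A(Y,B)$, a $*$-homomorphism since $\gamma$ is a $*$-isomorphism, one checks $\Theta(u^{\rm op})=\rho'(\tilde u)$, so that $\phi=s\circ({\rm id}\otimes\Theta)$ on the algebraic tensor product. As ${\rm id}\otimes\Theta$ is a $*$-homomorphism of minimal tensor products and $s$ is a state, $\phi$ is bounded for the minimal tensor norm; that is, $\tau$ is amenable.

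I expect the main obstacle to be the forward implication, and specifically the verification that the trace produced by Theorem~\ref{th_rest} from a \emph{minimal}-tensor state is amenable. The heart of the matter is the factorization $\phi=s\circ({\rm id}\otimes\Theta)$: one must identify the correct dual homomorphism $\rho'$ (whose existence rests on the mirror hypothesis), carry out the word-reversal bookkeeping through $\gamma$ correctly, and confirm that the marginal relations underlying Theorem~\ref{th_rest} indeed hold for states on $\cl A(X,A)\otimes_{\min}\cl A(Y,B)$. Once this factorization is in place, boundedness of $\phi$ is immediate from functoriality of $\otimes_{\min}$, and the reverse implication is a routine adaptation of Theorem~\ref{th_qcamen}.
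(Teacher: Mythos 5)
Your proposal is correct and follows essentially the same route as the paper: the backward implication is the paper's argument with $\otimes_{\min}$ in place of $\otimes_{\max}$, and for the forward implication the paper likewise reruns Theorem \ref{th_rest} for a state on $\cl A(X,A)\otimes_{\min}\cl A(Y,B)$, extracts the dual *-homomorphism (your $\rho'$, the paper's $\pi$) with the word-reversal identity, and factors $\phi = s\circ(\mathrm{id}\otimes\pi)\circ(\mathrm{id}\otimes\gamma^{-1})$, which is exactly your $\phi = s\circ(\mathrm{id}\otimes\Theta)$, to conclude amenability via minimal-norm boundedness.
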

\begin{proof}
(ii)$\Rightarrow$(i) 
Let $\phi : \cl A(X,A)\otimes \cl A(X,A)^{\rm op} \to \bb{C}$ be the bilinear form given by 
$$\phi(z\otimes w^{\rm op}) = \tau(zw), \ \ \ z,w \in \cl A(X,A).$$
Since $\tau$ is amenable, $\phi$ extends to a state on $\cl A(X,A)\otimes_{\min} \cl A(X,A)^{\rm op}$ 
\cite[Theorem 6.2.7]{brown_c*-algebras_2008}. 
Define a state $s$ on $\cl A(X,A)\otimes_{\min}\cl A(Y,B)\to \bb{C}$ by letting
$$s = \phi \circ ({\rm id}\otimes\gamma)\circ ({\rm id} \otimes \rho).$$
As in the proof of Theorem \ref{th_qcamen}, 
$$p(a,b|x,y) = s(e_{x,a}\otimes f_{y,b}), \ \ \ x\in X, y\in Y, a\in A, b\in B;$$
by Corollary \ref{th_qagen}, $p\in \cl C_{\rm qa}(\lambda)$.

(i)$\Rightarrow$(ii) 
Since $p\in \cl C_{\rm qa}(\lambda)$, by Corollary \ref{th_qagen}, there 
exists a state $s$ on $\cl A(X,A)\otimes_{\min} \cl A(Y,B)$ such that 
$$p(a,b|x,y) = s(e_{x,a}\otimes f_{y,b}), \ \ \ x\in X, y\in Y, a\in A, b\in B.$$
Arguing as in the proof of Theorem \ref{th_rest} and using (\ref{eq_zeez}), 
we may obtain a unital *-homomorphism $\rho : \cl A(Y,B)\to \cl A(X,A)$
with $\rho(\cl S_{Y,B})\subseteq \cl S_{X,A}$ and a unital *-homomorphism 
$\pi : \cl A(X,A)$ $\to$ $\cl A(Y,B)$ with $\pi(\cl S_{X,A})\subseteq \cl S_{Y,B}$ 
such that:
\begin{itemize}
\item[(a)] the functional $\tau : \cl A(X,A)\to \bb{C}$, given by $\tau(z) = s(z\otimes 1)$, is a trace;
\item[(b)] $s(z\otimes \pi(e_{x_1,a_1}\cdots e_{x_k,a_k})) = \tau(z e_{x_k,a_k}\cdots e_{x_1,a_1})$, for $z\in \cl A(A,X), x_i\in X, a_i\in A$, 
$i = 1,\dots, k$, and
\item[(c)] $p(a,b|x,y) = \tau(e_{x,a}\rho(f_{y,b}))$, for $x\in X$, $y\in Y$, $a\in A$, $b\in B$. 
\end{itemize}

Let $\phi : \cl A(X,A)\otimes_{\min} \cl A(X,A)^{\rm op} \to \bb{C}$ be the state defined by letting 
$$\phi = s\circ ({\rm id}\otimes \pi) \circ ({\rm id}\otimes \gamma^{-1}).$$
Let $z\in \cl A(X,A)$ and $w = e_{x_1,a_1}\cdots e_{x_k,a_k}$, for some $x_i\in X$, $a_i\in A$, $i = 1,\dots,k$. 
Set $\bar{w}:= \gamma^{-1}(w^{\rm op}) = e_{x_k,a_k}\cdots e_{x_1,a_1}$. 
Thus, using (b) we have 
\begin{eqnarray*}
\phi(z\otimes w^{\rm op}) 
& = & 
s(z\otimes \pi(\bar{w})) = s(z\otimes \pi(e_{x_k,a_k}\cdots e_{x_1,a_1}))\\
& = & 
\tau(z e_{x_1,a_1}\cdots e_{x_k,a_k}) = \tau(zw).
\end{eqnarray*}
By linearity and continuity, 
$$\phi(z\otimes w^{\rm op}) = \tau(zw), \ \ \ \ z, w\in \cl A(X,A).$$
By \cite[Theorem 6.2.7]{brown_c*-algebras_2008}, $\tau$ is amenable. 
\end{proof}

\medskip

\noindent {\bf Acknowledgement.} 
Part of this research was conducted during two Focused Research Meetings, funded by the Heilbronn Institute, and hosted at Queen's
University Belfast in October 2016 and March 2017. 
G. Scarpa acknowledges the support of
MTM2014-54240-P (MINECO), QUITEMAD+-CM Reference: S2013/ICE-2801 (Comunidad de Madrid), ICMAT Severo Ochoa project SEV-2015-0554 (MIN-ECO), and grant 48322 from the John Templeton Foundation. 
The opinions expressed in this publication are those of the authors and do not necessarily reflect the 
views of the John Templeton Foundation.

\end{document}